\providecommand{\U}[1]{\protect\rule{.1in}{.1in}}
\newtheorem{X}{X}[section]
\newtheorem{corollary}[X]{Corollary}
\newtheorem{E}[X]{}
\newtheorem{lemma}[X]{Lemma}
\newtheorem{proposition}[X]{Proposition}
\newtheorem{theorem}[X]{Theorem}
\newtheorem{definition}[X]{Definition}
\newtheorem{example}[X]{Example}
\newtheorem{notation}[X]{Notation}
\newtheorem{remark}[X]{Remark}
\newtheorem{aside}[X]{Aside}
\newtheorem*{notes}{Notes}
\theoremstyle{nonumberplain}
\newtheorem{proof}{Proof}
\def\mapsto{\DOTSB\mathchar"39AD }
\newcommand{\df}{\smash{\lower.12em\hbox{\textup{\tiny def}}}}
\let\quoteOld\quote
\let\endquoteOld\endquote
\renewenvironment{quote}{\small\quoteOld}{\endquoteOld}
\definecolor{bblue}{rgb}{0.0, 0.0, 0.6}
\tikzset{commutative diagrams/column sep/Huge/.initial=12ex}
\setlist{topsep=0.1em,itemsep=0.1em,parsep=0.1em}
\newcommand{\eb}[1]{{\itshape\bfseries#1}}
\renewcommand{\emph}{\eb}
\titleformat*{\section}{\LARGE\bfseries}
\titleformat*{\subsection}{\Large\itshape}
\titleformat*{\subsubsection}{\scshape}
\titleformat*{\paragraph}{\itshape}
\let\cite\citealt
\newcommand\babstract{\begin{abstract}}\newcommand\eabstract{\end{abstract}}
\newcommand{\bcomment}{}
\newcommand{\bfootnotesize}{\begin{footnotesize}}\newcommand\efootnotesize{\end{footnotesize}}
\newcommand{\bquote}{\begin{quote}}\newcommand\equote{\end{quote}}
\newcommand{\bsmall}{\begin{small}}\newcommand\esmall{\end{small}}
\newcommand{\btable}{\begin{table}}\newcommand{\etable}{\end{table}}
\newcommand{\edocument}{
\renewcommand*{\thesubsection}{\upshape\alph{subsection}.}
\setcounter{secnumdepth}{1}
\setcounter{tocdepth}{1}
\begin{document}

\title{Descent for algebraic schemes}
\date{June 8, 2024}
\author{J.S. Milne}
\maketitle

\babstract This is an elementary exposition of the basic descent theorems for
algebraic schemes over fields (Grothendieck, Weil,\ldots).\eabstract

\renewcommand{\thefootnote}{\fnsymbol{footnote}} \footnotetext{This is a
revised version of Chapter 16 of my notes \textit{Algebraic Geometry}. I've
posted it on the arXiv in order to have a convenient reference.} \renewcommand{\thefootnote}{\arabic{footnote}}

\tableoc

\label{dt}

\bigskip Consider fields $k\subset\Omega$. An algebraic scheme $V$ over $k$
defines a scheme $V_{\Omega}$ over $\Omega$ by extension of the base field.
Descent theory provides answers to the following question: what additional
structure do we need to place on an algebraic scheme over $\Omega$, or a
morphism of algebraic schemes over $\Omega$, in order to ensure that it comes
from $k$? We are most interested in the case that $\Omega$ is algebraically
closed and $k$ is perfect.

In this article, we shall make free use of the axiom of choice (usually in the
form of Zorn's lemma).

\setcounter{section}{-1}

\section{Preliminaries from algebraic geometry}

Let $k$ be a field. An algebraic scheme over $k$ is a separated scheme of
finite type over $\Spec k$. It is integral if it is reduced and irreducible,
and it is an algebraic variety if it is geometrically reduced. An affine
$k$-algebra is a finitely generated $k$-algebra $A$ such that $K\otimes_{k}A$
is reduced for all fields $K$ containing $k$ (it suffices to check this for an
algebraic closure of $k$). A regular map of algebraic schemes (or varieties)
over $k$ is a $k$-morphism. For an affine algebraic scheme $V$ over $k$,
$k[V]=\mathcal{O}{}_{V}(V)$ (so $V=\Spec k[V]$), and for an integral algebraic
scheme $V$ over $k$, $k(V)$ is the field of rational functions on $V$ (the
local ring at the generic point of $V$).

\begin{E}
\label{dt01}In an algebraic scheme $V$ over $k$, the intersection of any two
open affine subsets is again an open affine subset.
\end{E}

Let $U$ and $U^{\prime}$ be open affine subsets of $V$. Then $U\cap U^{\prime
}$ is certainly open, and the diagonal map $U\cap U^{\prime}\hookrightarrow
U\times U^{\prime}$ is a closed immersion because it is the pullback of the
diagonal $\Delta_{V}\hookrightarrow V\times V$,
\[
\begin{tikzcd}
U\cap U^{\prime}\arrow{r}\arrow{d}&U\times U^{\prime}\arrow{d}\\
\Delta_V\arrow{r}&V\times V.
\end{tikzcd}
\]
Now $U\cap U^{\prime}$ is an affine scheme because it is a closed subscheme of
an affine scheme.

\begin{E}
\label{dt06}Let $V$ be an algebraic variety over $k$. If $k$ is separably
closed, then $V(k)$ is dense in $|V|$ (for the Zariski topology).
\end{E}

We may assume that $V$ is irreducible. Then $k(V)$ admits a separating
transcendence basis over $k$. This means that $V$ is birationally equivalent
to a hypersurface
\[
f(X_{1},\ldots,X_{d+1})=0,\quad d=\dim V,
\]
where $f$ has the property that $\partial f/\partial X_{d+1}\neq0$. This
implies that the closed points $P$ such that $k(P)$ is separable over $k$ form
a dense subset of $|V|$. In particular, $V(k)$ is dense in $|V|$ when $k$ is
separably closed.

\begin{E}
\label{dt02}Let $V$ be a quasi-projective scheme over an infinite field. Every
finite set of closed points of $V$ is contained in an open affine subset.
\end{E}

Embed $V$ as a subscheme of $\mathbb{P}{}^{n}$. Let $\bar{V}$ be the closure
of $V$ in $\mathbb{P}{}^{n}$, and let $Z=\bar{V}\smallsetminus V$ be the
boundary. For each $P\in S$, there exists a homogeneous polynomial $F_{P}\in
I(Z)$ such that $F_{P}(P)\neq0$. We may suppose that the $F_{P}$ have the same
degree. Because $k$ is infinite, some linear combination $F$ of the $F_{P}$
has the property that, for all $P\in S$, $F(P)\neq0$. Then $\bar{V}\cap D(F)$
is an open affine subset of $V$ containing $S$.

\begin{E}
\label{dt03} Let $A\subset B$ be rings with $B$ integral over $A$. Let
$\mathfrak{p}$ be a prime ideal of $A$. Then there exists a prime ideal
$\mathfrak{q}$ of $B$ such that $\mathfrak{p}{}=\mathfrak{q}{}\cap A$. If
$\mathfrak{q}{}^{\prime}\supset\mathfrak{q}{}$ is a second such prime ideal,
then $\mathfrak{q}^{\prime}=\mathfrak{q}{}$.
\end{E}

See, for example, 7.3 and 7.5 of my notes \textit{A Primer of Commutative
Algebra.}

\begin{E}
[Chevalley's theorem]\label{dt04}Let $\phi\colon W\rightarrow V$ be a dominant
morphism of irreducible algebraic schemes over $k$. Then $\phi(W)$ contains a
dense open subset of $V$.
\end{E}

See, for example, Theorem 15.8 of my notes \textit{A Primer of Commutative
Algebra.}

\begin{E}
\label{dt05}Let $A$ and $B$ be $k$-algebras. Assume that $k$ is algebraically
closed and $A$ is finitely generated over $k$.

\begin{enumerate}
\item If $A$ and $B$ are reduced. so also is $A\otimes_{k}B$.

\item If $A$ and $B$ are integral domains, so also is $A\otimes_{k}B$.
\end{enumerate}
\end{E}

Let $\alpha\in A\otimes_{k}B$. Then $\alpha=\sum_{i=1}^{n}a_{i}\otimes b_{i}$,
some $a_{i}\in A$, $b_{i}\in B$. If one of the $b_{j}$ is a $k$-linear
combination of the remaining $b_{i}$, say, $b_{n}=\sum_{i=1}^{n-1}c_{i}b_{i}$,
$c_{i}\in k$, then, using the bilinearity of $\otimes$, we find that
\[
\alpha=\sum_{i=1}^{n-1}a_{i}\otimes b_{i}+\sum_{i=1}^{n-1}c_{i}a_{n}\otimes
b_{i}=\sum_{i=1}^{n-1}(a_{i}+c_{i}a_{n})\otimes b_{i}.
\]
Thus we can suppose that in the original expression of $\alpha$, the $b_{i}$
are linearly independent over $k$.

Now assume $A$ and $B$ to be reduced, and suppose that $\alpha$ is nilpotent.
Let ${\mathfrak{m}}$ be a maximal ideal of $A$. From $a\mapsto\bar{a}\colon
A\rightarrow A/{\mathfrak{m}}=k$ we obtain homomorphisms
\[
a\otimes b\mapsto\bar{a}\otimes b\mapsto\bar{a}b\colon A\otimes_{k}%
B\rightarrow k\otimes_{k}B\overset{\simeq}{\rightarrow}B.
\]
The image $\sum\bar{a}_{i}b_{i}$ of $\alpha$ under this homomorphism is a
nilpotent element of $B$, and hence is zero (because $B$ is reduced). As the
$b_{i}$ are linearly independent over $k$, this means that the $\bar{a}_{i}$
are all zero. Thus, the $a_{i}$ lie in all maximal ideals ${\mathfrak{m}}$ of
$A$, and so are zero (because $A$ is reduced). Hence $\alpha=0$, and we have
shown that $A\otimes_{k}B$ is reduced.

Now assume that $A$ and $B$ are integral domains, and let $\alpha$,
$\alpha^{\prime}\in A\otimes_{k}B$ be such that $\alpha\alpha^{\prime}=0$. As
before, we can write $\alpha=\sum a_{i}\otimes b_{i}$ and $\alpha^{\prime
}=\sum a_{i}^{\prime}\otimes b_{i}^{\prime}$ with the sets $\{b_{1}%
,b_{2},\ldots\}$ and $\{b_{1}^{\prime},b_{2}^{\prime},\ldots\}$ each linearly
independent over $k$. For each maximal ideal ${\mathfrak{m}}$ of $A$, we know
that $(\sum\bar{a}_{i}b_{i})(\sum\bar{a}_{i}^{\prime}b_{i}^{\prime})=0$ in
$B$, and so either $(\sum\bar{a}_{i}b_{i})=0$ or $(\sum\bar{a}_{i}^{\prime
}b_{i}^{\prime})=0$. Thus either all the $a_{i}\in{\mathfrak{m}}$ or all the
$a_{i}^{\prime}\in{\mathfrak{m}}$. This shows that
\[
\spm(A)=V(a_{1},\ldots,a_{m})\cup V(a_{1}^{\prime},\ldots,a_{n}^{\prime}).
\]
As $\spm(A)$ is irreducible, it follows that $\spm(A)$ equals either
$V(a_{1},\ldots,a_{m})$ or $V(a_{1}^{\prime},\ldots,a_{n}^{\prime})$. In the
first case $\alpha=0$, and in the second $\alpha^{\prime}=0$.

\begin{remark}
\label{dt6a}If $K$ and $L$ are fields containing $k$, then $K\otimes_{k}L$
need by not be reduced, and if it is reduced, then it need not be an integral
domain. For example, let $K=k[\alpha]$, where $\alpha^{p}=a\in k$, but
$\alpha\notin k$. Then $K$ is a field, but $K\otimes_{k}K$ contains the
nilpotent element $\alpha\otimes1-1\otimes\alpha$. On the other hand, if $K$
is a separable extension of $k$ and $L$ is a Galois closure of $K/k$, then%
\[
K\otimes_{k}L\simeq\prod\nolimits_{\sigma\colon K\rightarrow L}L_{\sigma
}\text{,}%
\]
where $L_{\sigma}$ is a copy of $L$. Thus, \ref{dt05} may fail if $k$ is not
algebraically closed. However, if $A$ and $B$ are finitely generated reduced
$k$-algbras and $k$ is perfect, then $A\otimes_{k}B$ is reduced.
\end{remark}

\begin{notation}
\label{dt44} Let $V$ be an algebraic scheme over a field $F$. For a
homomorphism of fields $i\colon F\rightarrow L$, we sometimes write $iV$ for
\[
V_{L}\overset{\df}{=}V\times_{\Spec F}\Spec L.
\]
For example, if $V$ is embedded in affine space, then we get $iV$ by applying
$i$ to the coefficients of the polynomials defining $V$. If $\sigma
\in\Aut(L/iF)$, so $\sigma\circ i=i$, then $\left(  \sigma\circ i\right)
V\simeq iV$. We often view this as an equality $\sigma V_{L}=V_{L}$.

A morphism $\varphi\colon V\rightarrow W$ defines a morphism $\varphi
_{L}\colon V_{L}\rightarrow W_{L}$, which we sometimes denote $i\varphi\colon
iV\rightarrow iW$. Note that $(i\varphi)(iZ)=i(\varphi(Z))$ for any algebraic
subscheme $Z$ of $V$. For schemes embedded in affine space, $i\varphi$ is
obtained from $\varphi$ by applying $i$ to the coefficients of the polynomials
defining $\varphi$.
\end{notation}

\section{Models}

Let $\Omega\supset k$ be fields, and let $V$ be an algebraic scheme over
$\Omega$. A \emph{model} of $V$ over $k$ is an algebraic scheme $V_{0}$ over
$k$ together with an isomorphism $\varphi\colon V\rightarrow V_{0\Omega}$. An
algebraic scheme over $\Omega$ need not have a model over $k$, and when it
does it typically will have many nonisomorphic models.\footnote{For example,
an elliptic curve $E$ over $\mathbb{C}$ has a model over a number field if and
only if its $j$-invariant $j(E)$ is an algebraic number. If $Y^{2}Z=X^{3}
+aXZ^{2}+bZ^{3}$ is one model of $E$ over a number field $k$ (meaning, $a,b\in
k$), then $Y^{2}Z=X^{3}+ac^{2}XZ^{2}+bc^{3}Z^{3}$ is a second, which is
isomorphic to the first only if $c$ is a square in $k$.}

Let $V$ be an affine algebraic variety over $\Omega$. An embedding
$V\hookrightarrow\mathbb{A}_{\Omega}^{n}$ defines a model of $V$ over $k$ if
$I(V)$ is generated by polynomials in $k[X_{1},\ldots,X_{n}]$, because then
$I_{0}\overset{\df}{=}I(V)\cap k[X_{1},\ldots,X_{n}]$ is a radical ideal,
$k[X_{1},\ldots,X_{n}]/I_{0}$ is an affine $k$-algebra, and $V(I_{0}%
)\subset\mathbb{A}{}_{k}^{n}$ is a model of $V$. Moreover, every model of $V$
arises in this way from an embedding in affine space, because every model of
an affine algebraic variety is affine. However, different embeddings in affine
space will usually give rise to different models. Similar remarks apply to
projective varieties.

Note that the condition that $I(V)$ be generated by polynomials in
$k[X_{1},\ldots,X_{n}]$ is stronger than asking that $V$ be the zero set of
some polynomials in $k[X_{1},\ldots,X_{n}]$. For example, let $V=V(X+Y+\alpha
)$, where $\alpha$ is an element of $\Omega$ such that $\alpha^{p}\in k$ but
$\alpha\notin k$. Then $V$ is the zero set of the polynomial $X^{p}%
+Y^{p}+\alpha^{p}$, which has coefficients in $k$, but $I(V)=(X+Y+\alpha)$ is
not generated by polynomials in $k[X,Y]$.

\section{Fixed fields}

Let $\Omega\supset k$ be fields, and let $\Gamma$ be the group $\Aut(\Omega
/k)$ of automomorphisms of $\Omega$ (as an abstract field) fixing the elements
of $k$. Define the%
\index{field!fixed}
\emph{fixed field} $\Omega^{\Gamma}$ of $\Gamma$ to be%
\[
\{a\in\Omega\mid\sigma a=a\text{ for all }\sigma\in\Gamma\}.
\]

\begin{proposition}
\label{dt0}The fixed field of $\Gamma$ equals $k$ in each of the following two cases:

\begin{enumerate}
\item $\Omega$ is a Galois extension of $k$ (possibly infinite);

\item $\Omega$ is an algebraically closed field and $k$ is perfect.
\end{enumerate}
\end{proposition}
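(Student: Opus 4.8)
The plan is to show, in each case, that every $a\in\Omega\smallsetminus k$ is moved by some $\sigma\in\Gamma$; this gives $\Omega^{\Gamma}\subseteq k$, and the reverse inclusion is trivial. The common engine will be the isomorphism extension theorem: a $k$-isomorphism between subfields of a normal (or algebraically closed) overfield extends, after matching transcendence bases, to an automorphism of the ambient field. The paper's standing use of the axiom of choice is precisely what will license the transcendence-basis step.

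For (1), I would first reduce to the finite case. Fix $a\in\Omega$ with $a\notin k$. Since $\Omega/k$ is Galois, $a$ is algebraic and separable over $k$, so it lies in a finite Galois subextension $L/k$ (take the splitting field inside $\Omega$ of the minimal polynomial of $a$). By the fundamental theorem of Galois theory for finite extensions, the fixed field of $\Gal(L/k)$ is $k$; as $a\notin k$ there is a $\tau\in\Gal(L/k)$ with $\tau a\neq a$. It then remains to promote $\tau$ to an element of $\Gamma$. Regarding $\tau$ as a $k$-embedding $L\hookrightarrow\Omega$ and using that $\Omega$ is algebraic over $L$, I extend $\tau$ to a $k$-embedding $\sigma\colon\Omega\hookrightarrow\overline{\Omega}$; normality of $\Omega/k$ forces $\sigma(\Omega)=\Omega$, so $\sigma\in\Gamma$ and $\sigma a=\tau a\neq a$.

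For (2), fix $a\in\Omega\smallsetminus k$ and split into two subcases. If $a$ is transcendental over $k$, the assignment $a\mapsto a+1$ defines a $k$-automorphism of the rational function field $k(a)$ that moves $a$. If $a$ is algebraic over $k$, then because $k$ is perfect $a$ is separable, so its minimal polynomial (of degree $>1$) has a second root $a'\neq a$, which lies in $\Omega$ since $\Omega$ is algebraically closed, and $a\mapsto a'$ defines a $k$-isomorphism $k(a)\to k(a')$ that moves $a$. In either subcase I obtain a $k$-isomorphism $\phi$ between subfields of $\Omega$ with $\phi(a)\neq a$ such that the transcendence degree of $\Omega$ over the source equals that over the target.

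The main work—and the step I expect to be the genuine obstacle—is extending such a $\phi$ to an automorphism of the full field $\Omega$. Here I would proceed in stages: first extend $\phi$ over the algebraic closures of its source and target taken inside $\Omega$ (isomorphism extension theorem), then choose transcendence bases of $\Omega$ over these two algebraically closed subfields; since the two transcendence degrees coincide, a bijection between the bases extends $\phi$ to an isomorphism of the corresponding purely transcendental extensions, and a final application of the isomorphism extension theorem carries this across the remaining algebraic extension to an automorphism $\sigma$ of $\Omega=\overline{\Omega}$ with $\sigma a\neq a$. The role of perfectness is pinpointed by the algebraic subcase: without it a purely inseparable $a\notin k$ would have no conjugate and would be fixed by all of $\Gamma$, exactly as illustrated by Remark \ref{dt6a}.
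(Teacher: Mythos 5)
Your argument is correct. Note that the paper itself does not prove Proposition \ref{dt0}; it simply cites \cite{milneFT}, 7.9 and 9.29, and what you have written is essentially the standard argument behind those references: for (a), reduction to a finite Galois subextension followed by extension of the automorphism using normality, and for (b), production of a conjugate (a shifted transcendental or a second root of a separable minimal polynomial) followed by the isomorphism extension theorem across matched transcendence bases. Your closing remark correctly isolates where perfectness enters, in agreement with Remark \ref{dt0m}(b) of the paper, and your appeal to Zorn's lemma is consistent with the paper's explicit warning (Remark \ref{dt0m}(a)) that the result requires the axiom of choice.
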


\begin{proof}
(a) See for example, \cite{milneFT}, 7.9.

(b) See for example, \cite{milneFT}, 9.29.
\end{proof}

\begin{remark}
\label{dt0m}(a) The proof of Proposition \ref{dt0} requires the axiom of
choice. For example, without the axiom of choice, every function $\mathbb{C}%
{}\rightarrow\mathbb{C}{}$ is measurable, hence continuous, but the only
continuous automorphisms of $\mathbb{C}{}$ are complex conjugation and the
identity map. Therefore, without the axiom of choice, $\mathbb{C}%
{}^{\Aut(\mathbb{C}{}/\mathbb{Q}{})}=\mathbb{R}{}$.

(b) Suppose that $\Omega$ is algebraically closed and $k$ is not perfect. Then
$k$ has characteristic $p\neq0$ and $\Omega$ contains an element $\alpha$ such
that $\alpha\notin k$ but $\alpha^{p}=a\in k$. As $\alpha$ is the unique root
of $X^{p}-a$, every automorphism of $\Omega$ fixing $k$ also fixes $\alpha$,
and so $\Omega^{\Gamma}\neq k$.
\end{remark}

The
\index{perfect closure}
\emph{perfect closure} of $k$ in $\Omega$ is the subfield
\[
k^{p^{-\infty}}=\{\alpha\in\Omega\mid\alpha^{p^{n}}\in k\text{ for some
}n\}\text{.}%
\]
The field $k^{p^{-\infty}}$ is purely inseparable over $k$. When $\Omega$ is
algebraically closed, it is the smallest perfect subfield of $\Omega$
containing $k$.

\begin{corollary}
\label{dt0c}If $\Omega$ is separably closed, then $\Omega^{\Aut(\Omega/k)}$ is
a purely inseparable algebraic extension of $k$. In particular, $\Omega
^{\Aut(\Omega/k)}=k$ if $k$ is perfect.
\end{corollary}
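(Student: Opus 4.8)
Write $\Gamma=\Aut(\Omega/k)$ and $E=\Omega^{\Gamma}$; the goal is to prove $E/k$ is purely inseparable algebraic, after which the ``in particular'' clause is automatic, since a perfect field has no proper purely inseparable extension. The plan is to isolate two statements: (i) $E$ is algebraic over $k$, and (ii) $E$ contains no element separable over $k$ other than those already in $k$. Together these force $E/k$ to be purely inseparable. It is convenient first to replace $k$ by its perfect closure $k^{p^{-\infty}}$ in $\Omega$: each element $\alpha$ of $k^{p^{-\infty}}$ is the unique root of some $X^{p^{n}}-a$ with $a\in k$ and so is fixed by every element of $\Gamma$ (the argument of Remark \ref{dt0m}(b)), whence $\Aut(\Omega/k)=\Aut(\Omega/k^{p^{-\infty}})$ and $k^{p^{-\infty}}\subseteq E$. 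Thus I may assume $k$ is perfect and aim to show $E=k$.

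For (ii) I would use the separable closure $k_{s}$ of $k$ inside $\Omega$. Since $\Omega$ is separably closed, $k_{s}$ is a genuine separable closure of $k$, hence $k_{s}/k$ is Galois; by Proposition \ref{dt0}(a), applied to this (possibly infinite) Galois extension, $k_{s}^{\Gal(k_{s}/k)}=k$. The subfield $k_{s}$ is stable under $\Gamma$, and the restriction $\Gamma\to\Gal(k_{s}/k)$ is surjective, because any $k$-automorphism of $k_{s}$ extends to an automorphism of $\Omega$. Consequently any $\alpha\in E$ that is separable over $k$ lies in $k_{s}$, is fixed by all of $\Gamma$, hence by $\Gal(k_{s}/k)$, and therefore lies in $k$. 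For (i), given $t\in\Omega$ transcendental over $k$, I would exhibit $\sigma\in\Gamma$ with $\sigma t\neq t$ by extending the automorphism $t\mapsto t+1$ of the rational subfield $k(t)$ to all of $\Omega$; this shows no transcendental element is fixed, so $E/k$ is algebraic. Combining (i) and (ii) gives $E=k$, and unwinding the reduction shows in general that $E=k^{p^{-\infty}}$, which is purely inseparable algebraic over $k$.

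Both (i) and (ii) rest on the same point, which is where the real work lies: extending a field isomorphism between subfields of $\Omega$ to an automorphism of $\Omega$. This is routine when $\Omega$ is algebraically closed, but here $\Omega$ is only separably closed, so one cannot simply extend through an algebraic closure and restrict --- an automorphism of $\overline{\Omega}$ need not carry $\Omega$ onto $\Omega$. The safe route is to extend the given isomorphism first over the perfect closure of the relevant subfield, where the uniqueness of $p$-power roots makes the extension canonical (for instance $t\mapsto t+1$ forces $t^{1/p^{n}}\mapsto t^{1/p^{n}}+1$), and then over a separable closure by the usual transfinite argument (Zorn's lemma). The essential verification --- and the main obstacle --- is that the resulting automorphism maps $\Omega$ into $\Omega$ rather than into a strictly larger purely inseparable extension; this is exactly the subtlety caused by $\Omega$ being separably but not algebraically closed, and it is what must be checked with care in each of (i) and (ii).
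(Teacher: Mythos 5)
Your reduction of the statement to ``no element of $E\smallsetminus k$ is separable over $k$'' plus ``$E/k$ is algebraic'' is sound, and the ``in particular'' clause is indeed automatic. But the proof stops exactly where the work begins, and you say so yourself: both (i) and (ii) require producing an automorphism of $\Omega$ extending a given automorphism of a small subfield ($k_{s}$ in (ii), $k(t)$ in (i)), and the one point genuinely in doubt --- that the extension can be arranged to carry $\Omega$ onto $\Omega$ --- is flagged as ``what must be checked with care'' but never checked. Your proposed remedy (extend over the purely inseparable closure of the base, then over a separable closure by Zorn) does not close the gap: $\Omega$ need not be a separable closure of the purely inseparable closure of $k_{s}(B)$ in $\Omega$, because an algebraic extension factors canonically as separable-then-purely-inseparable only from the top, not from the bottom. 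There are algebraic extensions $L/F$ in characteristic $p$ that are inseparable yet contain no purely inseparable subextension beyond $F$ (classically, a root of $X^{p^{2}}+uX^{p}+v$ over $\mathbb{F}_{p}(u,v)$); taking $\Omega$ to be a separable closure of such an $L$ gives a separably closed field that is not separable over the purely inseparable closure of $F$ in it, so the two-step Zorn argument stalls on elements that are neither separable nor purely inseparable over the field built so far. The claims you need are true, but the route you sketch for them does not go through as described. (A smaller point: the perfect closure of $k$ \emph{in} $\Omega$ need not be perfect when $\Omega$ is only separably closed, so ``I may assume $k$ is perfect'' also needs adjusting.)

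The paper's proof avoids this problem entirely by running the extension in the opposite, easy direction. Since $\Omega$ is separably closed, $\Omega^{\mathrm{al}}/\Omega$ is purely inseparable, so every $\sigma\in\Aut(\Omega/k)$ extends \emph{uniquely} --- hence canonically, with nothing to choose and nothing to verify --- to $\Omega^{\mathrm{al}}$: it must send $\alpha$ to the unique root of $X^{p^{n}}-\sigma(\alpha^{p^{n}})$. One then identifies $\Aut(\Omega/k)$ with $\Aut(\Omega^{\mathrm{al}}/k^{p^{-\infty}})$, where $k^{p^{-\infty}}$ is the perfect closure of $k$ taken in $\Omega^{\mathrm{al}}$, and applies part (b) of Proposition \ref{dt0} to the algebraically closed field $\Omega^{\mathrm{al}}$ over the perfect field $k^{p^{-\infty}}$ to get $\Omega^{\Gamma}\subseteq\Omega\cap k^{p^{-\infty}}$. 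No automorphism of $\Omega$ is ever constructed from below, and Proposition \ref{dt0}(a) is not needed at all. If you want to salvage your two-part structure, the missing ingredient is precisely this passage to $\Omega^{\mathrm{al}}$; as written, the key step of your argument is a genuine gap rather than a routine verification.
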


\begin{proof}
When $k$ has characteristic zero, $\Omega^{\Gamma}=k$, and there is nothing to
prove. Thus, we may suppose that $k$ has characteristic $p\neq0$. Choose an
algebraic closure $\Omega^{\mathrm{al}}$ of $\Omega$, and let $k^{p^{-\infty}%
}$ be the perfect closure of $k$ in $\Omega^{\mathrm{al}}$. As $\Omega
^{\mathrm{al}}$ is purely inseparable over $\Omega$, every element $\sigma$ of
$\Aut(\Omega/k)$ extends uniquely to an automorphism of $\Omega^{\mathrm{al}}%
$: if $\alpha\in\Omega^{\mathrm{al}}$, then $\alpha^{p^{n}}\in\Omega$ for some
$n$, and so an extension of $\sigma$ to $\Omega^{\mathrm{al}}$ must send
$\alpha$ to the unique root of $X^{p^{n}}-\sigma(\alpha^{p^{n}})$ in
$\Omega^{\mathrm{al}}$. The action of $\Aut(\Omega/k)$ on $\Omega
^{\mathrm{al}}$ identifies it with $\Aut(\Omega^{\mathrm{al}}/k^{p^{-\infty}%
})$. According to (b) of the proposition, $(\Omega^{\mathrm{al}})^{\Gamma
}=k^{p^{-\infty}}$, and so%
\[
k^{p^{-\infty}}\supset\Omega^{\Gamma}\supset k.
\]

\end{proof}

\section{Descending subspaces of vector spaces}

Let $\Omega\supset k$ be fields, and let $V$ be a $k$-subspace of an $\Omega
$-vector space $V(\Omega)$ such that the map
\begin{equation}
c\otimes v\mapsto cv\colon\Omega\otimes_{k}V\rightarrow V(\Omega) \label{e01}%
\end{equation}
is an isomorphism. This means that $\Omega V=V(\Omega)$ and that $k$-linearly
independent sets in $V$ are $\Omega$-linearly independent. Such $k$-spaces $V$
are the $k$-spans of $\Omega$-bases of $V(\Omega)$.

\begin{lemma}
\label{dt1a}Let $W$ be an $\Omega$-subspace of $V(\Omega)$. There exists at
most one $k$-subspace $W_{0}$ of $V$ such that $\Omega\otimes_{k}W_{0}$ maps
isomorphically onto $W$ under (\ref{e01}). The subspace $W_{0}$ exists if and
only if $V$ contains a set spanning $W$, in which case $W_{0}=V\cap W$.
\end{lemma}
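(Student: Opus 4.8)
The plan is to reduce the entire statement to the single condition $\Omega W_{0}=W$, where $\Omega W_{0}$ denotes the $\Omega$-span of $W_{0}$ inside $V(\Omega)$. First I would record the following consequence of the hypothesis: for \emph{any} $k$-subspace $W_{0}\subseteq V$, a $k$-basis of $W_{0}$ extends to a $k$-basis of $V$, which by assumption is $\Omega$-linearly independent in $V(\Omega)$; hence the chosen $k$-basis of $W_{0}$ is itself $\Omega$-linearly independent, the map $\Omega\otimes_{k}W_{0}\to V(\Omega)$ of (\ref{e01}) is injective, and its image is exactly $\Omega W_{0}$. Thus the condition ``$\Omega\otimes_{k}W_{0}$ maps isomorphically onto $W$'' is equivalent to the single equation $\Omega W_{0}=W$, and the whole lemma becomes a statement about $\Omega$-spans.

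Next I would prove the identity $W_{0}=V\cap W$, which simultaneously yields uniqueness. The inclusion $W_{0}\subseteq V\cap W$ is immediate, since $W_{0}\subseteq V$ and $W_{0}\subseteq\Omega W_{0}=W$. For the reverse inclusion I fix a $k$-basis $(f_{j})_{j}$ of $W_{0}$ and extend it to a $k$-basis $(f_{j})_{j}\cup(g_{l})_{l}$ of $V$; by hypothesis this is an $\Omega$-basis of $V(\Omega)$, and $(f_{j})_{j}$ is an $\Omega$-basis of $W=\Omega W_{0}$. Given $v\in V\cap W$, I expand $v$ in the $k$-basis of $V$, so the coefficients of the $g_{l}$ lie in $k$; on the other hand, since $v\in W$, it admits an $\Omega$-expansion involving only the $f_{j}$. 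Comparing these two expressions in the single $\Omega$-basis $(f_{j})_{j}\cup(g_{l})_{l}$ forces every $g_{l}$-coefficient to vanish, whence $v\in W_{0}$. This gives $V\cap W\subseteq W_{0}$, so $W_{0}=V\cap W$; since the right-hand side is independent of the choice of $W_{0}$, uniqueness follows.

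Finally I would settle existence. If some $W_{0}$ works, then $W=\Omega W_{0}$, so $W_{0}$ is itself a subset of $V$ spanning $W$, which proves necessity. Conversely, suppose $V$ contains a set $S$ spanning $W$, i.e.\ $\Omega S=W$, and put $W_{0}=V\cap W$. Since $S\subseteq V$ and $S\subseteq\Omega S=W$, we have $S\subseteq W_{0}$, whence $W=\Omega S\subseteq\Omega W_{0}$; and $\Omega W_{0}\subseteq\Omega W=W$ because $W$ is an $\Omega$-subspace. Thus $\Omega W_{0}=W$, and by the first paragraph $W_{0}=V\cap W$ is the required subspace.

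The only step with any real content is the coordinate comparison in the second paragraph, and its one essential ingredient is that a $k$-basis of $V$ remains $\Omega$-linearly independent in $V(\Omega)$ — precisely the hypothesis that (\ref{e01}) is an isomorphism. Everything else is formal manipulation of spans and bases (using that subspaces admit bases, which is available since we allow the axiom of choice), so I do not expect a genuine obstacle.
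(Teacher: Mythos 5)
Your proof is correct and follows essentially the same route as the paper: both arguments rest on the key fact that a $k$-basis of a subspace of $V$ extends to a $k$-basis of $V$ and hence stays $\Omega$-linearly independent, and both identify the unique candidate as $W_{0}=V\cap W$. Your version merely spells out in coordinates what the paper compresses into ``any $k$-basis for $V\cap W$ is an $\Omega$-basis for $W$'' and ``no proper $k$-subspace of $V\cap W$ can have this property.''
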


\begin{proof}
If $W_{0}$ exists, then $\Omega W_{0}=W$, so it contains a set spanning $W$.
Conversely, if $V$ contains a set spanning $W$, then any $k$-basis for $V\cap
W$ is an $\Omega$-basis for $W$, and so $c\otimes w\mapsto cw\colon
\Omega\otimes_{k}(V\cap W)\rightarrow W$ is an isomorphism. No proper
$k$-subspace of $V\cap W$ can have this property.
\end{proof}

\begin{example}
\label{dt11b}Consider the fields $\mathbb{C}\supset\mathbb{Q}{}{}$, and let
$V=\mathbb{Q}{}^{2}$ and $V(\Omega)=\mathbb{\mathbb{C}{}}^{2}$. If $W$ is the
$\mathbb{C}{}$-subspace $\{(x,y)\in\mathbb{C}{}^{2}\mid y=\sqrt{2}x\}$ of
$V(\Omega)$, then $W\cap V=0$, and no $W_{0}$ exists.
\end{example}

Now assume that $k$ is the fixed field of $\Gamma\overset{\df}{=}%
\Aut(\Omega/k)$, and let $\Gamma$ act on $\Omega\otimes_{k{}}V$ through its
action on $\Omega$,%
\begin{equation}
\tstyle\sigma(\sum c_{i}\otimes v_{i})=\sum\sigma c_{i}\otimes v_{i}%
,\quad\sigma\in\Gamma,\quad c_{i}\in\Omega,\quad v_{i}\in V. \label{e22}%
\end{equation}
There is a unique action of $\Gamma$ on $V(\Omega)$ fixing the elements of $V$
and such that each $\sigma\in\Gamma$ acts $\sigma$-linearly,%
\begin{equation}
\sigma(cv)=\sigma(c)\sigma(v)\text{ all }\sigma\in\Gamma\text{, }c\in
\Omega\text{, }v\in V(\Omega{})\text{.} \label{e23}%
\end{equation}

\begin{lemma}
\label{dt2}We have $V=V(\Omega)^{\Gamma}$.
\end{lemma}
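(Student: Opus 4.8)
The plan is to prove the two inclusions $V\subseteq V(\Omega)^{\Gamma}$ and $V(\Omega)^{\Gamma}\subseteq V$ separately. The first is immediate from the construction, while the second rests on a coordinate computation carried out in a well-chosen basis.

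The inclusion $V\subseteq V(\Omega)^{\Gamma}$ is automatic: by the defining property (\ref{e23}), the action of $\Gamma$ on $V(\Omega)$ was set up precisely so that each $\sigma\in\Gamma$ fixes the elements of $V$. For the reverse inclusion, first I would fix a $k$-basis $(e_{i})_{i\in I}$ of $V$. The isomorphism (\ref{e01}) guarantees that $k$-linearly independent subsets of $V$ remain $\Omega$-linearly independent and that $\Omega V=V(\Omega)$, so $(e_{i})$ is simultaneously an $\Omega$-basis of $V(\Omega)$. Hence every $v\in V(\Omega)$ has a unique expression $v=\sum_{i}c_{i}e_{i}$ with $c_{i}\in\Omega$, almost all zero.

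Next I would compute the action of an arbitrary $\sigma\in\Gamma$ on such a $v$. Since each $e_{i}$ lies in $V$ and is therefore fixed by $\sigma$, the $\sigma$-linearity (\ref{e23}) gives $\sigma(v)=\sum_{i}\sigma(c_{i})e_{i}$. If $v\in V(\Omega)^{\Gamma}$, then $\sum_{i}\sigma(c_{i})e_{i}=\sum_{i}c_{i}e_{i}$, and comparing coordinates in the $\Omega$-basis $(e_{i})$ forces $\sigma(c_{i})=c_{i}$ for every $i$ and every $\sigma\in\Gamma$. Thus each $c_{i}$ lies in the fixed field $\Omega^{\Gamma}$, which equals $k$ by the standing hypothesis, and so $v=\sum_{i}c_{i}e_{i}\in V$.

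There is no serious obstacle once one works in a basis: the entire argument is the observation that the semilinear action operates on coordinates exactly through the action on $\Omega$, so that fixed vectors are precisely those whose coordinates lie in $\Omega^{\Gamma}=k$. The only step demanding a little care is the appeal to (\ref{e01}) that promotes a $k$-basis of $V$ to an $\Omega$-basis of $V(\Omega)$; this is what makes the coordinatewise comparison legitimate, and it is also where the full strength of the hypothesis on the map in (\ref{e01}) — not merely that $\Omega V=V(\Omega)$ — is used.
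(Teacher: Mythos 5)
Your proof is correct and follows essentially the same route as the paper: choose a $k$-basis of $V$, observe via (\ref{e01}) that it is an $\Omega$-basis of $V(\Omega)$, compute the action coordinatewise, and conclude from $\Omega^{\Gamma}=k$. The only cosmetic difference is that the paper phrases the computation in $\Omega\otimes_{k}V$ using (\ref{e22}) while you work directly in $V(\Omega)$ using (\ref{e23}); these are identified under (\ref{e01}), so the arguments coincide.
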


\begin{proof}
Let $(e_{i})_{i\in I}$ be a $k$-basis for $V$. Then $(1\otimes e_{i})_{i\in
I}$ is an $\Omega$-basis for $\Omega\otimes_{k}V$, and $\sigma\in\Gamma$ acts
on $v=\sum c_{i}\otimes e_{i}$ according to the rule (\ref{e22}). Thus, $v$ is
fixed by $\Gamma$ if and only if each $c_{i}$ is fixed by $\Gamma$ and so lies
in $k$.
\end{proof}

\begin{lemma}
\label{dt3}Let $W$ be a $\Omega$-subspace of $V(\Omega)$ stable under the
action of $\Gamma$. If $W\neq0$, then $W^{\Gamma}\neq0$.
\end{lemma}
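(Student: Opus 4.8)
The plan is to run the classical minimal-support descent argument. First I would fix a $k$-basis $(e_{i})_{i\in I}$ of $V$. By the hypothesis on (\ref{e01}), the family $(e_{i})_{i\in I}$ is also an $\Omega$-basis of $V(\Omega)$, so every vector of $V(\Omega)$ has a unique expression as a finite $\Omega$-linear combination $\sum_{i}c_{i}e_{i}$; I call the number of indices with $c_{i}\neq 0$ its length. Since $W\neq 0$, I can choose a nonzero $w=\sum_{i}c_{i}e_{i}\in W$ whose length is as small as possible, and then, after dividing by one of its nonzero coordinates (legitimate because $W$ is an $\Omega$-subspace), arrange that $c_{i_{0}}=1$ for some index $i_{0}$ in the support of $w$.

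The heart of the argument is the following computation. For $\sigma\in\Gamma$, each $e_{i}$ lies in $V$ and is therefore fixed by $\sigma$, so the $\sigma$-linearity (\ref{e23}) gives $\sigma w=\sum_{i}\sigma(c_{i})e_{i}$. Because $W$ is $\Gamma$-stable, $\sigma w\in W$, hence $\sigma w-w\in W$ as well. The support of $\sigma w-w$ is contained in the support of $w$, and its $i_{0}$-coordinate is $\sigma(1)-1=0$; thus $\sigma w-w$ has strictly smaller length than $w$. By the minimality of the length of $w$, the element $\sigma w-w$ must vanish, i.e.\ $\sigma w=w$. Since $\sigma$ was arbitrary, $w\in W^{\Gamma}$, and as $w\neq 0$ this shows $W^{\Gamma}\neq 0$, as required. (One does not even need to invoke $k=\Omega^{\Gamma}$ here; that hypothesis enters only when one wants to recognize $W^{\Gamma}$ as a $k$-structure on $W$, as in Lemma \ref{dt2}.)

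The step I expect to carry the real content is the length-minimization together with the normalization $c_{i_{0}}=1$: it is precisely this normalization that makes the troublesome coordinate cancel under the operation $\sigma(\cdot)-(\cdot)$, converting the mere \emph{semilinearity} of the action into a genuine descent. Everything else—uniqueness of coordinates, $\Gamma$-stability of $W$, and the fact that subtraction cannot enlarge the support—is routine. The one point I would state cleanly is that dividing by $c_{i_{0}}$ stays inside $W$, which holds exactly because $W$ is assumed to be an $\Omega$-subspace rather than merely a $k$-subspace.
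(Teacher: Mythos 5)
Your proof is correct and is essentially the paper's own argument: both pick a nonzero $w\in W$ of minimal length in a fixed spanning/basis expression over $V$, normalize one coefficient to $1$, and observe that $\sigma w-w\in W$ has strictly smaller support and hence vanishes. Your parenthetical remark that $k=\Omega^{\Gamma}$ is not needed here is also accurate.
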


\begin{proof}
As $V(\Omega)=\Omega V$, every nonzero element $w$ of $W$ can be expressed in
the form%
\[
w=c_{1}v_{1}+\cdots+c_{n}v_{n},\quad c_{i}\in\Omega\smallsetminus
\{0\},\quad\text{ }v_{i}\in V,\quad n\geq1.
\]
Let $w$ be a nonzero element of $W$ for which $n$ takes its smallest value.
After scaling, we may suppose that $c_{1}=1$. For $\sigma\in\Gamma$, the
element
\[
\sigma w-w=(\sigma c_{2}-c_{2})e_{2}+\cdots+(\sigma c_{n}-c_{n})e_{n}%
\]
lies in $W$ and has at most $n-1$ nonzero coefficients, and so is zero. Thus,
$w\in W^{\Gamma}$.
\end{proof}

\begin{proposition}
\label{dt3a}A subspace $W$ of $V(\Omega)$ is of the form $W=\Omega W_{0}$ for
some $k$-subspace $W_{0}$ of $V$ if and only if it is stable under the action
of $\Gamma$, in which case $W_{0}=V\cap W=W^{\Gamma}$.
\end{proposition}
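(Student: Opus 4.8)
The plan is to prove the two implications separately, extracting the formula for $W_0$ en route. For the forward (easy) direction, suppose $W=\Omega W_0$ with $W_0$ a $k$-subspace of $V$. Since $\Gamma$ fixes $V\supseteq W_0$ pointwise and acts $\sigma$-linearly by~(\ref{e23}), for $w=\sum c_i w_i$ (with $c_i\in\Omega$, $w_i\in W_0$) one has $\sigma w=\sum\sigma(c_i)w_i\in\Omega W_0=W$. Thus $W$ is $\Gamma$-stable, and the same computation shows that $\Omega W_0$ is $\Gamma$-stable for \emph{any} $k$-subspace $W_0\subseteq V$.

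For the converse, first note that identifying the fixed space is immediate: by Lemma~\ref{dt2}, $V=V(\Omega)^\Gamma$, so for any $\Gamma$-stable $W$ we have $W^\Gamma=W\cap V(\Omega)^\Gamma=W\cap V$. Set $W_0=W\cap V=W^\Gamma$, a $k$-subspace of $V$ with $\Omega W_0\subseteq W$. By Lemma~\ref{dt1a} it then suffices to prove that $W_0$ spans $W$ over $\Omega$, i.e.\ that $\Omega W_0=W$: in that case $W_0$ is the unique $k$-subspace with $\Omega\otimes_k W_0\xrightarrow{\sim}W$, giving both $W=\Omega W_0$ and $W_0=V\cap W=W^\Gamma$, which is the asserted description.

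To prove $\Omega W_0=W$ I would descend to a quotient and apply Lemma~\ref{dt3}. Put $\bar V=V/W_0$ and $\bar V(\Omega)=V(\Omega)/\Omega W_0$. Because $\Omega$ is free, hence flat, over $k$, applying $\Omega\otimes_k-$ to $0\to W_0\to V\to\bar V\to 0$ preserves exactness and identifies $\Omega\otimes_k W_0$ with $\Omega W_0\subseteq V(\Omega)$; consequently $\Omega\otimes_k\bar V\xrightarrow{\sim}\bar V(\Omega)$, so the standing hypotheses of this section hold for the pair $\bar V\subseteq\bar V(\Omega)$. Since $\Omega W_0$ is $\Gamma$-stable (by the forward direction), the $\Gamma$-action descends to $\bar V(\Omega)$, still $\sigma$-linear and fixing $\bar V$ pointwise; in particular $\bar V=\bar V(\Omega)^\Gamma$ by Lemma~\ref{dt2}. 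The image $\bar W=W/\Omega W_0$ is then a $\Gamma$-stable $\Omega$-subspace of $\bar V(\Omega)$.

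The crux is to compute $\bar W^\Gamma$ and conclude by Lemma~\ref{dt3}. By Lemma~\ref{dt2} for the quotient, $\bar W^\Gamma=\bar W\cap\bar V(\Omega)^\Gamma=\bar W\cap\bar V$. A short chase shows this vanishes: if $\bar w=\bar v$ with $w\in W$ and $v\in V$, then $w-v\in\Omega W_0\subseteq W$, whence $v\in V\cap W=W_0$ and $\bar v=0$. Thus $\bar W^\Gamma=0$, and the contrapositive of Lemma~\ref{dt3} (a $\Gamma$-stable subspace with no nonzero fixed vector must be zero) gives $\bar W=0$, i.e.\ $\Omega W_0=W$, as required. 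The one point demanding care is exactly this passage to the quotient: one must check that $V(\Omega)/\Omega W_0$ is genuinely another instance of the descent setup, so that Lemma~\ref{dt3} is applicable, and this is precisely where the flatness of $\Omega$ over $k$ enters.
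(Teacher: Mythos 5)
Your proof is correct, and it reaches the same crux as the paper --- reduce to Lemma \ref{dt3} applied to a $\Gamma$-stable subspace whose fixed part is shown to vanish --- but by a genuinely different decomposition. The paper chooses a $k$-linear complement $W^{\prime}$ to $W\cap V$ in $V$, so that $V(\Omega)=\Omega(W\cap V)\oplus\Omega W^{\prime}$, and applies Lemma \ref{dt3} to the $\Gamma$-stable subspace $W\cap\Omega W^{\prime}$, whose fixed part is $(W\cap V)\cap W^{\prime}=0$; the vanishing of $W\cap\Omega W^{\prime}$ then forces $W=\Omega(W\cap V)$ by a direct-sum argument. You instead pass to the quotient $V(\Omega)/\Omega W_{0}$ and apply Lemma \ref{dt3} there to the image of $W$. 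The two routes are essentially dual --- your quotient $V/W_{0}$ plays the role of the paper's complement $W^{\prime}$ --- and the trade-off is this: the paper's argument requires choosing a complement (Zorn's lemma in the infinite-dimensional case, which the paper explicitly permits), while yours avoids that choice but must instead verify that the quotient pair $\bar{V}\subset\bar{V}(\Omega)$ is again an instance of the section's standing setup, i.e.\ that $\Omega\otimes_{k}\bar{V}\rightarrow\bar{V}(\Omega)$ is an isomorphism and that the induced action is the canonical one; your flatness argument does this correctly, and you rightly flag it as the point demanding care, since the fixed points of a quotient are not in general the image of the fixed points and Lemma \ref{dt2} must be invoked for the new pair rather than transported from the old one. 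Your observation that Lemma \ref{dt1a} then pins down $W_{0}=V\cap W$ uniquely is a clean way to finish, matching the paper's parenthetical remark in the forward direction.
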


\begin{proof}
Certainly, if $W=\Omega W_{0}$, then it is stable under $\Gamma$ (and
$W=\Omega(W\cap V)$). Conversely, assume that $W$ is stable under $\Gamma$,
and let $W^{\prime}$ be a complement to $W\cap V$ in $V$, so that%
\[
V=(W\cap V)\oplus W^{\prime}\text{.}%
\]
Then%
\[
(W\cap\Omega W^{\prime})^{\Gamma}=W^{\Gamma}\cap\left(  \Omega W^{\prime
}\right)  ^{\Gamma}=(W\cap V)\cap W^{\prime}=0\text{,}%
\]
and so, by Lemma \ref{dt3},
\begin{equation}
W\cap\Omega W^{\prime}=0\text{.} \label{e29}%
\end{equation}
As $W\supset\Omega(W\cap V)$ and
\[
V(\Omega)=\Omega(W\cap V)\oplus\Omega W^{\prime}\text{,}%
\]
this implies that $W=\Omega(W\cap V)$: write an element $w$ of $W$ as
$w=w_{1}+w_{2}$ with $w_{1}\in\Omega(W\cap V)$ and $w_{2}\in\Omega W^{\prime}%
$; then $w_{2}=w-w_{1}\in W\cap\Omega W^{\prime}$, and so it is $0$.
\end{proof}

\section{Descending subschemes of algebraic schemes}

Let $\Omega\supset k$ be fields.

\begin{proposition}
\label{dt4a}Let $V$ be an algebraic scheme over $k$, and let $W$ be a closed
subscheme of $V_{\Omega}$. There exists at most one closed subscheme $W_{0}$
of $V$ such that $W_{0\Omega}=W$ (as a subscheme of $V_{\Omega}$) .
\end{proposition}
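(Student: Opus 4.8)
The plan is to reduce the uniqueness to the affine case and then quote the vector-space uniqueness of Lemma~\ref{dt1a}. Since a closed subscheme is determined by its sheaf of ideals and equality of closed subschemes may be tested on an open cover, this reduction is harmless. So suppose $W_0$ and $W_0'$ are closed subschemes of $V$ with $W_{0\Omega}=W=W_{0'\Omega}$, and choose an open affine cover $V=\bigcup_i U_i$ with $U_i=\Spec A_i$. Extension of scalars commutes with restriction to open subschemes, so on $(U_i)_\Omega=\Spec(A_i\otimes_k\Omega)$ the subscheme $W_{0\Omega}$ restricts to $(W_0\cap U_i)_\Omega$, and likewise for $W_0'$. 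It therefore suffices to show $W_0\cap U_i=W_0'\cap U_i$ for every $i$; that is, I may assume $V=\Spec A$ is affine.

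In the affine case $W_0$ and $W_0'$ correspond to ideals $I_0,I_0'\subset A$ and $W$ to an ideal $I\subset A\otimes_k\Omega$. The crucial translation is that the defining ideal of the base change $W_{0\Omega}$ is the $\Omega$-span $\Omega I_0$ of $I_0$ inside $A\otimes_k\Omega$: because $\Omega$ is free, hence flat, over $k$, the natural map $I_0\otimes_k\Omega\to A\otimes_k\Omega$ is injective with image $\Omega I_0$. Under this identification the hypothesis $W_{0\Omega}=W=W_{0'\Omega}$ becomes $\Omega I_0=I=\Omega I_0'$.

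Now I would apply Lemma~\ref{dt1a}. Regard $A\otimes_k\Omega$ as the ambient $\Omega$-vector space and $A=A\otimes 1$ as the $k$-subspace; the canonical map $\Omega\otimes_k A\to A\otimes_k\Omega$ is an isomorphism, so the situation of (\ref{e01}) applies. Both $I_0$ and $I_0'$ are $k$-subspaces whose $\Omega$-span equals the subspace $I$, so Lemma~\ref{dt1a} forces each of them to equal $A\cap I$; in particular $I_0=I_0'$, whence $W_0\cap U_i=W_0'\cap U_i$ and finally $W_0=W_0'$. The single delicate point is the identification of the ideal of $W_{0\Omega}$ with $\Omega I_0$ --- equivalently, that cutting out by $I_0$ and then extending scalars produces the subscheme cut out by the extended ideal --- which is precisely where flatness of $\Omega/k$ is used; the rest is the formal local-to-global reduction together with a direct appeal to Lemma~\ref{dt1a}.
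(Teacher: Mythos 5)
Your proof is correct and takes essentially the same route as the paper's: reduce to the affine case via an open affine cover, identify the ideal of $W_{0\Omega}$ with the $\Omega$-span of $I_{0}$ in $A\otimes_{k}\Omega$, and invoke the uniqueness statement of Lemma~\ref{dt1a}. You merely spell out the flatness justification that the paper leaves implicit.
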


\begin{proof}
If $V=\Spec A$ and $I$ is an ideal in $\Omega\otimes_{k}A$, then there is at
most one ideal $I_{0}$ in $A$ such that $\Omega\otimes_{k}I_{0}$ maps
isomorphically onto $I$ under $c\otimes a\mapsto ca$. Moreover, the ideal
$I_{0}$ exists if and only if $A$ contains a set of generators for the ideal
$I$, in which case $I_{0}=I\cap A$ (see~\ref{dt1a}). To prove the general
case, cover $V$ with open affines.
\end{proof}

\begin{proposition}
\label{dt4b}Let $V$ and $W$ be algebraic schemes over $k$, and let
$\varphi\colon V_{\Omega}\rightarrow W_{\Omega}$ be a morphism over $\Omega$.
There exists at most one morphism $\varphi_{0}\colon V\rightarrow W$ such that
$\varphi_{0\Omega}=\varphi$.
\end{proposition}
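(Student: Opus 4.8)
The plan is to reduce the uniqueness of $\varphi_{0}$ to the uniqueness of a descended closed subscheme, which is exactly Proposition \ref{dt4a}, by passing to graphs. Since $W$ is separated over $k$, the diagonal $\Delta_{W}\hookrightarrow W\times_{k}W$ is a closed immersion; consequently, for any $k$-morphism $\psi\colon V\rightarrow W$, the graph $\Gamma_{\psi}\subset V\times_{k}W$, defined as the fibre product of $\Delta_{W}$ along $(\psi,\mathrm{id}_{W})\colon V\times_{k}W\rightarrow W\times_{k}W$, is a closed subscheme of $V\times_{k}W$ (compare the pullback-of-the-diagonal argument in \ref{dt01}). The first thing I would record is that $\psi$ is recovered from $\Gamma_{\psi}$: the projection $\pr_{1}$ restricts to an isomorphism $\Gamma_{\psi}\rightarrow V$ whose inverse is the graph morphism $(\mathrm{id}_{V},\psi)$, so that $\psi=\pr_{2}\circ(\pr_{1}|_{\Gamma_{\psi}})^{-1}$. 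In particular, two $k$-morphisms $V\rightarrow W$ with the same graph coincide.

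Next I would check that the graph construction commutes with extension of the base field. Because base change preserves fibre products and closed immersions, and because $\Delta_{W_{\Omega}}=(\Delta_{W})_{\Omega}$ while the base change of $(\psi,\mathrm{id}_{W})$ is $(\psi_{\Omega},\mathrm{id}_{W_{\Omega}})$, pulling back commutes with base change to give
\[
(\Gamma_{\psi})_{\Omega}=\Gamma_{\psi_{\Omega}}
\]
inside $(V\times_{k}W)_{\Omega}=V_{\Omega}\times_{\Omega}W_{\Omega}$.

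With these two facts the conclusion is immediate. Suppose $\varphi_{0}$ and $\varphi_{0}^{\prime}$ are $k$-morphisms $V\rightarrow W$ with $\varphi_{0\Omega}=\varphi=\varphi_{0\Omega}^{\prime}$. Their graphs $\Gamma_{\varphi_{0}}$ and $\Gamma_{\varphi_{0}^{\prime}}$ are closed subschemes of the algebraic $k$-scheme $V\times_{k}W$, and by the displayed identity both base-change to the single closed subscheme $\Gamma_{\varphi}$ of $(V\times_{k}W)_{\Omega}$. Applying Proposition \ref{dt4a} (with $V\times_{k}W$ in place of the scheme denoted $V$ there) forces $\Gamma_{\varphi_{0}}=\Gamma_{\varphi_{0}^{\prime}}$ as closed subschemes of $V\times_{k}W$, and since a morphism is determined by its graph, $\varphi_{0}=\varphi_{0}^{\prime}$.

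The only genuinely substantive inputs are that $\Gamma_{\psi}$ is closed (just separatedness of $W$) and the compatibility $(\Gamma_{\psi})_{\Omega}=\Gamma_{\psi_{\Omega}}$; I expect the latter to be the main point to get right, since it rests on forming the graph being a fibre-product construction and hence stable under the base change $-\otimes_{k}\Omega$. Everything else is formal. As an alternative that avoids graphs, one could reduce to the affine case by covering $V$ and $W$ with open affines: there a morphism is a $k$-algebra homomorphism $k[W]\rightarrow k[V]$, and since $a\mapsto 1\otimes a\colon k[V]\rightarrow\Omega\otimes_{k}k[V]$ is injective, two such homomorphisms agreeing after $-\otimes_{k}\Omega$ already agree, after which the uniqueness on overlaps lets one glue. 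I prefer the graph argument here because it quotes Proposition \ref{dt4a} directly and sidesteps the bookkeeping of gluing.
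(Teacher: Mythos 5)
Your argument is correct and is essentially the paper's own proof: the paper likewise observes that separatedness of $W$ makes the graph $\Gamma_{\varphi}$ closed in $V\times W$ and then applies Proposition \ref{dt4a}. You have simply spelled out the routine compatibility $(\Gamma_{\psi})_{\Omega}=\Gamma_{\psi_{\Omega}}$ that the paper leaves implicit.
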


\begin{proof}
As $W$ is separated, the graph $\Gamma_{\varphi}$ of $\varphi$ is closed in
$V\times W$, and so we can apply \ref{dt4a}.
\end{proof}

Now assume that $k$ is perfect and $\Omega$ is separably closed. Then $k$ is
the fixed field of $\Gamma=\Aut(\Omega/k)$.

For any algebraic variety $V$ over $\Omega$, $V(\Omega)$ is Zariski dense in
$V$ (see \ref{dt06}). It follows that two regular maps $V\rightrightarrows W$
of algebraic varieties coincide if they agree on $V(\Omega)$.

For any algebraic scheme $V$ over $k$, $\Gamma$ acts on $V(\Omega)$. For
example, if $V$ is embedded in $\mathbb{A}{}^{n}$ or $\mathbb{P}{}^{n}$ over
$k$, then $\Gamma$ acts on the coordinates of a point. If $V=\Spec A$, then%
\[
V(\Omega)=\Hom(A,\Omega)\qquad(k\text{-algebra homomorphisms),}%
\]
and $\Gamma$ acts through its action on $\Omega$.

\begin{proposition}
\label{dt4}Let $V$ be an algebraic scheme over $k{}{}{}$, and let $W$ be a
reduced closed subscheme of $V_{\Omega}$. There exists a closed subscheme
$W_{0}$ of $V$ such that $W=W_{0\Omega}$ if and only if $W(\Omega)$ is stable
under the action of $\Gamma{}$ on $V(\Omega)$.
\end{proposition}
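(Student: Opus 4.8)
The plan is to prove the two implications separately, in both cases reducing to the affine situation and there translating the statement about the subscheme $W$ into a statement about an ideal, so that the linear descent of Proposition \ref{dt3a} can be applied. Throughout write $\Gamma=\Aut(\Omega/k)$, so that $k=\Omega^{\Gamma}$ by hypothesis. The easy implication is \emph{descent $\Rightarrow$ stability}: if $W=W_{0\Omega}$ for a closed subscheme $W_{0}\subseteq V$, then $W_{0}$ is already defined over $k$, so each $\sigma\in\Gamma$, fixing $k$, carries $\Omega$-points of $W_{0}$ to $\Omega$-points of $W_{0}$; thus $W(\Omega)=W_{0}(\Omega)$ is $\Gamma$-stable. (Affine-locally this is just the statement that an extended ideal $\Omega\otimes_{k}I_{0}$ is fixed by the semilinear action of $\Gamma$.)

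For the converse I would first reduce to the case $V$ affine. Cover $V$ by open affines $U_{i}$; each $U_{i}$ is defined over $k$, so $U_{i}(\Omega)$ is $\Gamma$-stable, and hence so is $(W\cap U_{i,\Omega})(\Omega)=W(\Omega)\cap U_{i}(\Omega)$. Granting the affine case, one obtains descents $W_{0,i}$ of $W\cap U_{i,\Omega}$. On an overlap $U_{i}\cap U_{j}$ — again affine by \ref{dt01} — the two restrictions both descend $W\cap(U_{i}\cap U_{j})_{\Omega}$, so they coincide by the uniqueness in \ref{dt4a}, and the $W_{0,i}$ glue to the required $W_{0}$.

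So the heart of the matter is the affine case. Write $V=\Spec A$ with $A$ an affine $k$-algebra, set $R=\Omega\otimes_{k}A$, and let $I\subseteq R$ be the radical ideal with $W=\Spec(R/I)$. The group $\Gamma$ acts on $R$ by ring automorphisms via $\sigma(c\otimes a)=\sigma c\otimes a$, and on $V(\Omega)=\Hom(A,\Omega)$ by $\sigma\cdot x=\sigma\circ x$; a short computation gives $\widetilde{\sigma x}=\sigma\circ\tilde{x}\circ\sigma^{-1}$ for the induced map $\tilde{x}\colon R\to\Omega$, from which one reads off that the $\Omega$-zero-set of $\sigma I$ equals $\sigma(W(\Omega))$. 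Assume now that $W(\Omega)$ is $\Gamma$-stable; then for every $\sigma$ the radical ideals $\sigma I$ and $I$ have the same $\Omega$-zero-set. The crux — and the step I expect to be the main obstacle — is to upgrade this to $\sigma I=I$. This is the Nullstellensatz over $\Omega$: a radical ideal equals the ideal of all regular functions vanishing on its $\Omega$-points. Since $\Omega$ is only separably closed, the content here is the density of the $\Omega$-points (\ref{dt06}) — a regular function vanishing on the dense set $W(\Omega)$ is nilpotent, hence $0$ in the reduced ring $R/I$ — so this is exactly the point where the separably-closed hypothesis and the reducedness of $W$ are genuinely used. Granting it, $\sigma I=I$ for all $\sigma$, i.e. $I$ is $\Gamma$-stable.

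Finally I would feed this into the linear descent. Viewing $A$ as a $k$-subspace of $R=\Omega\otimes_{k}A$ (the hypothesis of \ref{dt3a} holds trivially here), the $\Gamma$-stability of the $\Omega$-subspace $I$ yields, by Proposition \ref{dt3a}, a $k$-subspace $I_{0}=I\cap A=I^{\Gamma}$ with $I=\Omega I_{0}$. One checks at once that $I_{0}$ is an \emph{ideal} of $A$: if $a_{0}\in I_{0}$ and $a\in A$, then $aa_{0}\in A\cap I=I_{0}$. Hence $W_{0}=\Spec(A/I_{0})$ is a closed subscheme of $V$, and the identification $\Omega\otimes_{k}(A/I_{0})=R/\Omega I_{0}=R/I$ shows $W_{0\Omega}=W$, as required.
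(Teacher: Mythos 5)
Your proof is correct and follows essentially the same route as the paper's: reduce to the affine case, show that $\Gamma$-stability of $W(\Omega)$ forces $\Gamma$-stability of the defining ideal $I$, apply Proposition \ref{dt3a} to descend $I$ to $I_{0}=I\cap A$, and glue over an affine cover. The only difference is that you spell out, via density of $\Omega$-points and reducedness, the step the paper states without comment (``Because $W(\Omega)$ is stable under $\Gamma$, so also is $I(W)$''), which is exactly where the hypotheses on $W$ and $\Omega$ enter.
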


\begin{proof}
Certainly, the condition is necessary. For the converse, suppose first that
$V$ is affine, and let $I(W)$ be the ideal in $\Omega\lbrack V_{\Omega}]$
corresponding to $W$. Note that $\Omega\lbrack V_{\Omega}]=\Omega\otimes
_{k}k[V]$. Because $W(\Omega)$ is stable under $\Gamma$, so also is $I(W)$,
and Proposition \ref{dt3a} shows that $I(W)$ is spanned by $I_{0}%
\overset{\df}{=}I(W)\cap k{}[V]$. The closed subscheme $W_{0}$ of $V$
corresponding to $I_{0}$ has the property that $W=W_{0\Omega}$.

To deduce the general case, cover $V$ with open affines $V=\bigcup V_{i}$.
Then $W_{i}\overset{\df}{=}V_{i\Omega}\cap W$ is such that $W_{i}(\Gamma)$ is
stable under $\Gamma$, and so it arises from a closed subscheme $W_{i0}$ of
$V_{i}$; a similar statement holds for $W_{ij}\overset{\df}{=}W_{i}\cap W_{j}%
$. Define $W_{0}$ to be the scheme obtained by patching the $W_{i0}$ along the
open subschemes $W_{ij0}$.
\end{proof}

\begin{corollary}
\label{dt5}Let $V$ and $W$ be algebraic varieties over $k{}$, and let $f\colon
V_{\Omega}\rightarrow W_{\Omega}$ be a regular map. If $f(\Omega)\colon
V(\Omega)\rightarrow W(\Omega)$ commutes with the actions of $\Gamma$, then
$f$ arises from a (unique) regular map $V\rightarrow W$ over $k{}$.
\end{corollary}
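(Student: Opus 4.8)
The plan is to apply Proposition \ref{dt4} to the graph of $f$, exactly as Proposition \ref{dt4b} reduced uniqueness of morphisms to uniqueness of subschemes. Since $W$ is separated, so is $W_{\Omega}$ over $\Omega$, and hence the graph $\Gamma_{f}$ of $f$ is a closed subscheme of $V_{\Omega}\times_{\Omega}W_{\Omega}=(V\times W)_{\Omega}$; here $V\times W$ is an algebraic scheme over $k$, which is all that Proposition \ref{dt4} requires of the ambient scheme. Two hypotheses of that proposition must be checked. First, $\Gamma_{f}$ is reduced: the first projection restricts to an isomorphism $\Gamma_{f}\xrightarrow{\sim}V_{\Omega}$, and $V_{\Omega}$ is reduced because $V$, being a variety, is geometrically reduced. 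Second, under the identification $(V\times W)(\Omega)=V(\Omega)\times W(\Omega)$ we have $\Gamma_{f}(\Omega)=\{(x,f(x))\mid x\in V(\Omega)\}$, and for $\sigma\in\Gamma$ the hypothesis that $f(\Omega)$ commutes with $\Gamma$ gives $\sigma\cdot(x,f(x))=(\sigma x,\sigma f(x))=(\sigma x,f(\sigma x))$, which again lies in $\Gamma_{f}(\Omega)$; thus $\Gamma_{f}(\Omega)$ is stable under $\Gamma$.

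Proposition \ref{dt4} then furnishes a closed subscheme $Z_{0}$ of $V\times W$ with $Z_{0\Omega}=\Gamma_{f}$. It remains to recognize $Z_{0}$ as the graph of a regular map $V\to W$, which amounts to showing that the map $p\colon Z_{0}\to V$ induced by the first projection is an isomorphism. Its base change $p_{\Omega}\colon Z_{0\Omega}=\Gamma_{f}\to V_{\Omega}$ is precisely the isomorphism noted above. The crux of the argument, and the one step that is not purely formal, is to descend this: a $k$-morphism of algebraic schemes that becomes an isomorphism after the faithfully flat base change $-\otimes_{k}\Omega$ is itself an isomorphism. Concretely, over an affine piece this is the statement that a homomorphism of $k$-algebras $A\to C$ is an isomorphism once $A\otimes_{k}\Omega\to C\otimes_{k}\Omega$ is, which follows because $\Omega$ is free, hence faithfully flat, over $k$ (so injectivity and surjectivity may both be tested after tensoring with $\Omega$); the point requiring care is the globalization over the base $V$, for which one invokes that the isomorphism property is local for the faithfully flat topology.

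Granting that $p$ is an isomorphism, set $f_{0}=q\circ p^{-1}\colon V\to W$, where $q\colon Z_{0}\to W$ is induced by the second projection. Then $Z_{0}$ is the graph of $f_{0}$, so the graph of $f_{0\Omega}$ is $Z_{0\Omega}=\Gamma_{f}$, whence $f_{0\Omega}=f$. Uniqueness of such an $f_{0}$ is already guaranteed by Proposition \ref{dt4b}. I expect the only real obstacle to be the descent of the isomorphism property for $p$; every other step is either formal or a direct application of the results already established.
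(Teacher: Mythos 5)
Your proposal is correct and follows exactly the paper's own (one-line) proof: apply Proposition \ref{dt4} to the graph $\Gamma_{f}\subset(V\times W)_{\Omega}$. The details you supply---reducedness of $\Gamma_{f}$ via its isomorphism with $V_{\Omega}$, $\Gamma$-stability of $\Gamma_{f}(\Omega)$ from the equivariance hypothesis, and faithfully flat descent of the isomorphism property for the projection $Z_{0}\rightarrow V$---are all sound and are precisely what the paper leaves implicit.
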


\begin{proof}
Apply Proposition \ref{dt4} to the graph of $f$, $\Gamma_{f}\subset(V\times
W)_{\Omega}$.
\end{proof}

\begin{corollary}
\label{dt6}The functor%
\begin{equation}
V\rightsquigarrow(V_{\Omega},\mathrm{\,\,action\,\,of\,\,}\Gamma
\mathrm{\,\,on\,\,}V(\Omega)) \label{e31}%
\end{equation}
from algebraic varieties over $k$ to algebraic varieties over $\Omega$
equipped with an action of $\Gamma$ on their $\Omega$-points is fully faithful.
\end{corollary}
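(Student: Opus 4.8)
The plan is to unwind \textbf{fully faithful} into its two constituent conditions---faithfulness and fullness---and observe that each is supplied verbatim by a result already proved in this section. First I would record how the functor behaves on arrows: a regular map $\varphi\colon V\rightarrow W$ over $k$ is sent to its base change $\varphi_{\Omega}\colon V_{\Omega}\rightarrow W_{\Omega}$. Because $\varphi$ is defined over $k$, the induced map $\varphi(\Omega)\colon V(\Omega)\rightarrow W(\Omega)$ commutes with the action of $\Gamma$ (concretely, applying $\sigma\in\Gamma$ to coordinates commutes with a map given by polynomials over $k$). Hence $\varphi_{\Omega}$ really is a morphism in the target category, and the functor is well defined on morphisms. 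So the claim reduces to showing that
\[
\Hom_{k}(V,W)\longrightarrow\Hom_{\Gamma}(V_{\Omega},W_{\Omega}),\qquad\varphi\longmapsto\varphi_{\Omega},
\]
is injective (faithful) and surjective (full), where the right-hand side denotes the regular maps $V_{\Omega}\rightarrow W_{\Omega}$ over $\Omega$ whose effect on $\Omega$-points commutes with $\Gamma$.

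For faithfulness I would invoke Proposition \ref{dt4b}: a morphism over $k$ is determined by its base change to $\Omega$ (the graph is closed, so uniqueness of descent of subschemes applies), whence $\varphi_{\Omega}=\psi_{\Omega}$ forces $\varphi=\psi$, and the displayed map is injective. For fullness I would take an arbitrary element $g$ of the right-hand side, that is, a regular map $g\colon V_{\Omega}\rightarrow W_{\Omega}$ over $\Omega$ whose action on $\Omega$-points commutes with $\Gamma$, and produce a map over $k$ with base change $g$. This is exactly the assertion of Corollary \ref{dt5}, which descends any such $\Gamma$-equivariant regular map to a (unique) regular map $V\rightarrow W$ over $k$. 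Combining the two gives a bijection on Hom-sets, i.e.\ full faithfulness.

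I do not anticipate a genuine obstacle: the substantive work has already been carried out in \ref{dt4b} and \ref{dt5}, and the corollary merely repackages them as a statement about a functor. The only points deserving a moment's care are the standing hypotheses of this section that are silently in force---$k$ perfect and $\Omega$ separably closed, so that $k$ is the fixed field of $\Gamma$ (Corollary \ref{dt0c}) and $V(\Omega)$ is Zariski dense in $V$ for varieties (\ref{dt06})---since both are used inside \ref{dt5}; I would make sure the reader understands that these are the same assumptions under which \ref{dt5} was established, so no new input is required.
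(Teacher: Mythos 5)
Your proof is correct and matches the paper's, which simply records the corollary as a restatement of Corollary \ref{dt5}: the existence clause there gives fullness and the uniqueness clause (resting on \ref{dt4b}) gives faithfulness, exactly as you spell out. Your added remarks about well-definedness on morphisms and the standing hypotheses are accurate but not new content.
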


\begin{proof}
Restatement of \ref{dt5}.
\end{proof}

In particular, an algebraic variety $V$ over $k$ is uniquely determined up to
a unique isomorphism by the algebraic variety $V_{\Omega}$ equipped with the
action of $\Gamma$ on $V(\Omega)$.

In Theorems \ref{dt31} and \ref{dt30} below, we obtain sufficient conditions
for a pair to lie in the essential image of the functor (\ref{e31}).

\section{Galois descent of vector spaces}

Let $\Gamma$ be a group acting on a field $\Omega$, and let $k$ be a subfield
of $\Omega^{\Gamma}$. By a \emph{semilinear action} of $\Gamma$ on an $\Omega
$-vector space $V$ we mean a homomorphism $\rho\colon\Gamma\rightarrow
\Aut_{k\text{-linear}}(V)$ such that, for all $\sigma\in\Gamma$, $\rho
(\sigma)$ acts $\sigma$-linearly on $V$,%
\[
\rho(\sigma)(cv)=\sigma(c)v,\quad c\in\Omega,\quad v\in V.
\]
For example, if $V$ is a $k$-vector space, then $\sigma(c\otimes v)=\sigma
c\otimes v$ is a semilinear action of $\Gamma$ on $\Omega\otimes_{k}V$.

\begin{lemma}
\label{dt7}Let $S$ be the standard $M_{n}(k)$-module (i.e., $S=k^{n}$ with
$M_{n}(k)$ acting by left multiplication). The functor $V\rightsquigarrow
S\otimes_{k}V$ from $k$-vector spaces to left $M_{n}(k)$-modules is an
equivalence of categories.
\end{lemma}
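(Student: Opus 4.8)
The plan is to exhibit an explicit quasi-inverse rather than to check full faithfulness and essential surjectivity separately. Write $e=e_{11}$ for the matrix unit with a $1$ in the $(1,1)$-position and zeros elsewhere, and let $e_{ij}$ denote the other matrix units, so that $e_{ab}e_{cd}=\delta_{bc}e_{ad}$ and $\sum_{i}e_{ii}=1$. Since $eM_{n}(k)e$ consists of the scalar multiples of $e$, it is naturally identified with $k$, so for any left $M_{n}(k)$-module $M$ the group $eM$ is a $k$-vector space. This defines a functor $G\colon M\rightsquigarrow eM$ from left $M_{n}(k)$-modules to $k$-vector spaces, and I would show that $F\colon V\rightsquigarrow S\otimes_{k}V$ and $G$ are mutually quasi-inverse.

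One composite is immediate. Identifying $S=k^{n}$ with its standard basis $\varepsilon_{1},\ldots,\varepsilon_{n}$, we have $eS=k\varepsilon_{1}$, and hence $G(F(V))=e(S\otimes_{k}V)=(eS)\otimes_{k}V\cong V$, naturally in $V$; thus $G\circ F\cong\id$. For the other composite I would define a natural map $\phi_{M}\colon S\otimes_{k}eM\to M$ by $\varepsilon_{i}\otimes v\mapsto e_{i1}v$ and verify it is $M_{n}(k)$-linear: for $A\in M_{n}(k)$ one has $A\varepsilon_{i}=\sum_{j}A_{ji}\varepsilon_{j}$, and the short matrix computation $\sum_{j}A_{ji}e_{j1}=Ae_{i1}$ shows $\phi_{M}(A(\varepsilon_{i}\otimes v))=A\,\phi_{M}(\varepsilon_{i}\otimes v)$.

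The crux — and the only place the matrix structure is really used — is that $\phi_{M}$ is an isomorphism. For surjectivity I would invoke the completeness relation $\sum_{i}e_{i1}e_{1i}=1$: given $m\in M$, set $v_{i}=e_{1i}m$; then $ev_{i}=e_{11}e_{1i}m=e_{1i}m=v_{i}$, so $v_{i}\in eM$, and $\sum_{i}e_{i1}v_{i}=\sum_{i}e_{i1}e_{1i}m=m$. For injectivity, note $S\otimes_{k}eM=\bigoplus_{i}\varepsilon_{i}\otimes eM$, so a general element is $\sum_{i}\varepsilon_{i}\otimes v_{i}$ with $v_{i}\in eM$; applying $e_{1j}$ to the relation $\sum_{i}e_{i1}v_{i}=0$ gives $v_{j}=e_{11}v_{j}=\sum_{i}e_{1j}e_{i1}v_{i}=0$, using $e_{1j}e_{i1}=\delta_{ij}e$ and $ev_{j}=v_{j}$. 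Hence $\phi_{M}$ is bijective and natural, so $F\circ G\cong\id$.

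I expect no genuine obstacle beyond this last isomorphism: everything reduces to the identities $e_{ij}e_{kl}=\delta_{jk}e_{il}$ and $\sum_{i}e_{ii}=1$, which encode precisely that $e$ is a full idempotent (equivalently, that $M_{n}(k)$ is a simple ring with $M_{n}(k)\,e\,M_{n}(k)=M_{n}(k)$). Combining $G\circ F\cong\id$ and $F\circ G\cong\id$ then yields the asserted equivalence of categories.
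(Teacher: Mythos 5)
Your proof is correct, and it takes a genuinely different route from the paper's. The paper verifies the two halves of an equivalence separately: full faithfulness by describing $\Hom_{M_{n}(k)}(S\otimes_{k}V,S\otimes_{k}W)$ via matrices, using that $S$ is simple with $\End_{M_{n}(k)}(S)=k$; and essential surjectivity by decomposing $M_{n}(k)=\bigoplus_{i}L(i)\simeq S^{n}$ into column ideals, so that every module is a \emph{sum} of copies of $S$, and then invoking Zorn's lemma to refine the sum to a direct sum. You instead exhibit the explicit quasi-inverse $M\rightsquigarrow e_{11}M$ and check that both composites are naturally isomorphic to the identity by direct computation with the matrix units $e_{ij}$ — the identities $e_{ab}e_{cd}=\delta_{bc}e_{ad}$ and $\sum_{i}e_{i1}e_{1i}=1$ carry the whole argument, and your verification of bijectivity of $\varepsilon_{i}\otimes v\mapsto e_{i1}v$ is complete (surjectivity from $m=\sum_{i}e_{i1}(e_{1i}m)$ with $e_{1i}m\in e_{11}M$, injectivity by hitting the relation with $e_{1j}$). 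What each approach buys: yours is constructive and avoids the axiom of choice entirely, even for modules of arbitrary size, whereas the paper's argument (which the author flags as relying on Zorn's lemma) is not; on the other hand, the paper's route establishes the more general statement recorded in its Aside, namely that for \emph{any} ring $A$ such that $_{A}A$ is a sum of submodules isomorphic to a simple module $S$, every left $A$-module is a direct sum of copies of $S$ — a fact your idempotent computation, being specific to $M_{n}(k)$, does not yield directly.
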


\begin{proof}
Let $V$ and $W$ be $k$-vector spaces. The choice of bases $(e_{i})_{i\in I}$
and $(f_{j})_{j\in J}$ for $V$ and $W$ identifies $\Hom_{k}(V,W)$ with the set
of matrices $(a_{ji})_{(j,i)\in J\times I}$, $a_{ji}\in k$, such that, for a
fixed $i$, all but finitely many $a_{ji}$ are zero. Because $S$ is a simple
$M_{n}(k)$-module and $\End_{M_{n}(k)}(S)=k$, the set $\Hom_{M_{n}%
(k)}(S\otimes_{k}V,S\otimes_{k}W)$ has the same description, and so the
functor $V\rightsquigarrow S\otimes_{k}V$ from $k$-modules to left $M_{n}%
(k)$-modules is fully faithful.

The functor $V\rightsquigarrow S\otimes_{k}V$ sends a vector space $V$ with
basis $(e_{i})_{i\in I}$ to a direct sum of copies of $S$ indexed by $I$.
Therefore, to show that the functor is essentially surjective, it suffices to
prove that every left $M_{n}(k)$-module is a direct sum of copies of $S$.

We first prove this for $M_{n}(k)$ regarded as a left $M_{n}(k)$-module. For
$1\leq i\leq n$, let $L(i)$ be the set of matrices in $M_{n}(k)$ whose entries
are zero except for those in the $i$th column. Then $L(i)$ is a left ideal in
$M_{n}(k)$, and $L(i)$ is isomorphic to $S$ as an $M_{n}(k)$-module. Hence,
\[
M_{n}(k)=\bigoplus_{i}L(i)\simeq S^{n}\qquad\text{(as a left }M_{n}%
(k)\text{-module).}%
\]

We now prove it for an arbitrary (nonzero) left $M_{n}(k)$-module $M$. The
choice of a set of generators for $M$ realizes it as a quotient of a sum of
copies of $M_{n}(k)$, and so $M$ is a sum of copies of $S$. It remains to show
that the sum can be made direct. Let $I$ be the set of submodules of $M$
isomorphic to $S$, and let $\Xi$ be the set of subsets $J$ of $I$ such that
the sum $N(J)\overset{\df}{=}\sum_{N\in J}N$ is direct, i.e., such that for
any $N_{0}\in J$ and finite subset $J_{0}$ of $J$ not containing $N_{0}$,
$N_{0}\cap\sum_{N\in J_{0}}N=0$. If $J_{1}\subset J_{2}\subset\ldots$ is a
chain of sets in $\Xi$, then $\bigcup J_{i}\in\Xi$, and so Zorn's lemma
implies that $\Xi$ has maximal elements. For any maximal $J$, $M=N(J)$ because
otherwise, there exists an element $S^{\prime}$ of $I$ not contained in
$N(J)$; because $S^{\prime}$ is simple, $S^{\prime}\cap N(J)=0$, and it
follows that $J\cup\{S^{\prime}\}\in\Xi$, contradicting the maximality of $J$.
\end{proof}

\begin{aside}
\label{dt7n}The above argument proves the following statement: let $A$ be a
ring (not necessarily commutative) and $S$ a simple left $A$-module; if
$_{A}A$ is a sum of submodules isomorphic to $S$, then every left $A$-module
is a direct sum of submodules isomorphic to $S$.
\end{aside}

\begin{aside}
\label{dt7m}Let $A$ and $B$ be rings (not necessarily commutative), and let
$S$ be $A$-$B$-bimodule (this means that $A$ acts on $S$ on the left, $B$ acts
on $S$ on the right, and the actions commute). When the functor
$M\rightsquigarrow S\otimes_{B}M\colon\Mod_{B}\rightarrow\Mod_{A}$ is an
equivalence of categories, $A$ and $B$ are said to be%
\index{Morita equivalent}
\emph{Morita equivalent through }$S$. In this terminology, the lemma says that
$M_{n}(k)$ and $k$ are Morita equivalent through $S$.
\end{aside}

\begin{proposition}
\label{dt8}Let $\Omega$ be a finite Galois extension of $k$ with Galois group
$\Gamma$. The functor $V\rightsquigarrow(\Omega\otimes_{k}V,\ast)$ from
$k$-vector spaces to $\Omega$-vector spaces endowed with a semilinear action
of $\Gamma$ is an equivalence of categories.
\end{proposition}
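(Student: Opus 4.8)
The plan is to reduce the entire statement to the Morita equivalence of Lemma \ref{dt7}. The key observation is that an $\Omega$-vector space carrying a semilinear action of $\Gamma$ is nothing other than a module over a single $k$-algebra $A$, and that this $A$ is isomorphic to $M_n(k)$, where $n=[\Omega:k]=|\Gamma|$. Under that isomorphism the functor $V\rightsquigarrow\Omega\otimes_{k}V$ becomes the Morita functor $V\rightsquigarrow S\otimes_{k}V$ of the lemma, so the asserted equivalence is then immediate.

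First I would introduce the twisted group algebra $A=\Omega\langle\Gamma\rangle$: as an $\Omega$-vector space it is free with basis $(e_\sigma)_{\sigma\in\Gamma}$, and its multiplication is determined by $e_\sigma e_\tau=e_{\sigma\tau}$ together with $e_\sigma c=\sigma(c)e_\sigma$ for $c\in\Omega$. Unwinding the definitions, a left $A$-module is precisely an $\Omega$-vector space equipped with a semilinear $\Gamma$-action (with $e_\sigma$ acting as $\rho(\sigma)$), and the $A$-linear maps are exactly the $\Omega$-linear, $\Gamma$-equivariant maps. Thus the target category of the proposition is canonically identified with the category of left $A$-modules, and one notes $\dim_{k}A=n^{2}$.

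Next I would identify $A$ with $M_n(k)$. The algebra $A$ acts $k$-linearly on $\Omega$ itself, with $\Omega\subset A$ acting by multiplication and each $e_\sigma$ acting as $\sigma$; this gives a $k$-algebra homomorphism $A\to\End_{k}(\Omega)\cong M_n(k)$. Both sides have $k$-dimension $n^{2}$, so it suffices to prove injectivity. An element of the kernel is a relation $\sum_{\sigma}c_\sigma\,\sigma=0$ in $\End_{k}(\Omega)$ with $c_\sigma\in\Omega$, and Dedekind's theorem on the linear independence of distinct characters, applied to the automorphisms $\sigma\colon\Omega\to\Omega$, forces every $c_\sigma=0$. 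Hence $A\cong M_n(k)$, and under this isomorphism $\Omega$ (as the module above) becomes the standard module $S=k^{n}$.

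The main obstacle is exactly this identification $A\cong M_n(k)$, and within it the injectivity step resting on the independence of characters; the dimension count alone is purely formal. Once it is in hand, everything matches up: for a $k$-vector space $V$ the $A$-module $S\otimes_{k}V=\Omega\otimes_{k}V$ carries the actions $c'(c\otimes v)=c'c\otimes v$ and $e_\sigma(c\otimes v)=\sigma(c)\otimes v$, which is precisely the semilinear $\Gamma$-action $\ast$ of the statement. Therefore the functor of the proposition coincides with the Morita functor $V\rightsquigarrow S\otimes_{k}V$ of Lemma \ref{dt7} and is an equivalence of categories; its quasi-inverse sends $(W,\ast)$ to $W^{\Gamma}$, though this need not be exhibited explicitly.
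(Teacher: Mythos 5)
Your proof is correct and follows essentially the same route as the paper: both form the twisted group algebra $\Omega[\Gamma]$, identify its modules with semilinear $\Gamma$-representations, prove $\Omega[\Gamma]\cong\End_k(\Omega)\cong M_n(k)$ via Dedekind's independence of characters plus a dimension count, and then invoke the Morita equivalence of Lemma \ref{dt7}. No gaps.
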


\begin{proof}
Let $\Omega\lbrack\Gamma]~$be the $\Omega$-vector space with basis
$\{\sigma\in\Gamma\}$, and make $\Omega\lbrack\Gamma]$ into a $k$-algebra by
setting%
\[
\tstyle\big(\sum\limits_{\sigma\in\Gamma}a_{\sigma}\sigma\big)\big(\sum
\limits_{\tau\in\Gamma}b_{\tau}\tau\big)=\sum\limits_{\sigma,\tau}(a_{\sigma
}\cdot\sigma b_{\tau})\sigma\tau\text{.}%
\]
Then $\Omega\lbrack\Gamma]$ acts $k$-linearly on $\Omega$ by the rule%
\[
\tstyle(\sum_{\sigma\in\Gamma}a_{\sigma}\sigma)c =\sum_{\sigma\in\Gamma
}a_{\sigma}(\sigma c),
\]
and Dedekind's theorem on the independence of characters (\cite{milneFT},
5.14) implies that the homomorphism%
\[
\Omega\lbrack\Gamma]\rightarrow\End_{k}(\Omega)
\]
defined by this action is injective. By counting dimensions over $k$, one sees
that it is an isomorphism. Therefore, Lemma \ref{dt7} shows that
$\Omega\lbrack\Gamma]$ and $k$ are Morita equivalent through $\Omega$, i.e.,
the functor $V\mapsto\Omega\otimes_{k}V$ from $k$-vector spaces to left
$\Omega\lbrack\Gamma]$-modules is an equivalence of categories. This is
precisely the statement of the lemma.
\end{proof}

When $\Omega$ is an infinite Galois extension of $k$, we endow $\Gamma$ with
the Krull topology, and we say that a semilinear action of $\Gamma$ on an
$\Omega$-vector space $V$ is%
\index{action!continuous}
\emph{continuous }if every element of $V$ is fixed by an open subgroup of
$\Gamma$, i.e., if%
\[
V=\bigcup\nolimits_{\Delta}V^{\Delta}\qquad(\text{union over the open
subgroups }\Delta\text{ of }\Gamma\text{).}%
\]
For example, the action of $\Gamma$ on $\Omega$ is continuous, and it follows
that, for any $k$-vector space $V$, the action of $\Gamma$ on $\Omega
\otimes_{k}V$ is continuous.

\begin{proposition}
\label{dt10}Let $\Omega$ be a Galois extension of $k$ (possibly infinite) with
Galois group $\Gamma$. For any $\Omega$-vector space $V$ equipped with a
continuous semilinear action of $\Gamma$, the map%
\[
\tstyle\sum c_{i}\otimes v_{i}\mapsto\sum c_{i}v_{i}\colon{}\Omega\otimes
_{k}V^{\Gamma}\rightarrow V
\]
is an isomorphism.
\end{proposition}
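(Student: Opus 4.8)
The plan is to prove injectivity and surjectivity separately, reducing the surjectivity to the finite Galois case already settled in Proposition \ref{dt8}. Throughout I use that $\Omega^{\Gamma}=k$ (Proposition \ref{dt0}(a)) and that $\Gamma$, equipped with the Krull topology, is profinite, so that every open subgroup contains an open \emph{normal} subgroup.

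For injectivity it suffices to show that a $k$-linearly independent family in $V^{\Gamma}$ remains $\Omega$-linearly independent, and for this I would use the minimal-relation technique of Lemma \ref{dt3}. Given a nontrivial relation $\sum_{i=1}^{n}c_{i}v_{i}=0$ with $v_{i}\in V^{\Gamma}$ and the number $n$ of nonzero coefficients minimal, normalize $c_{1}=1$; applying an arbitrary $\sigma\in\Gamma$ and using that the $v_{i}$ are fixed gives $\sum_{i}(\sigma c_{i}-c_{i})v_{i}=0$, a strictly shorter relation, hence identically zero. Thus every $c_{i}\in\Omega^{\Gamma}=k$, contradicting $k$-linear independence.

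For surjectivity, take $v\in V$. By continuity $v$ is fixed by some open subgroup, and after shrinking I may assume it is fixed by an open normal subgroup $\Delta$, so that $v\in V^{\Delta}$. Put $E=\Omega^{\Delta}$, a finite Galois extension of $k$ with $\Gal(E/k)=\Gamma/\Delta$. The key observation is that $V^{\Delta}$ is naturally an $E$-vector space carrying a semilinear action of $\Gamma/\Delta$: normality of $\Delta$ ensures each $\sigma\in\Gamma$ preserves $V^{\Delta}$ and acts $\bar\sigma$-semilinearly over $E$, and one checks that $(V^{\Delta})^{\Gamma/\Delta}=V^{\Gamma}$. Applying the finite Galois descent of Proposition \ref{dt8} to the $E$-vector space $V^{\Delta}$ shows that the multiplication map $E\otimes_{k}V^{\Gamma}\to V^{\Delta}$ is an isomorphism; in particular $v$ lies in the $E$-span, hence the $\Omega$-span, of $V^{\Gamma}$. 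Letting $\Delta$ range over open normal subgroups exhibits every element of $V$ in the image.

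The main obstacle is the bookkeeping in this reduction: verifying that $V^{\Delta}$ really is an $E$-vector space with an $E$-semilinear $\Gamma/\Delta$-action (which is exactly where normality of $\Delta$ is used) and that taking $(\Gamma/\Delta)$-invariants recovers $V^{\Gamma}$, so that Proposition \ref{dt8} applies verbatim. Once this is in place the infinite case follows by passing to the union over open normal $\Delta$, the continuity hypothesis being precisely what guarantees that this union exhausts $V$.
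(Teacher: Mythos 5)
Your proof is correct and follows essentially the same route as the paper: both reduce to the finite Galois case (Proposition \ref{dt8} together with Lemma \ref{dt2}, which identifies the descended subspace with $V^{\Gamma}$) by using continuity to place each vector in $V^{\Delta}$ for an open normal subgroup $\Delta$ with $\Omega^{\Delta}$ finite Galois over $k$. The only difference is cosmetic: the paper assembles the finite pieces by passing to the direct limit over $\Delta$, which yields injectivity and surjectivity simultaneously, whereas you prove injectivity separately by the minimal-relation argument of Lemma \ref{dt3}.
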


\begin{proof}
Suppose first that $\Gamma$ is finite. According to Proposition \ref{dt8},
there is a subspace $W$ of $V$ such that $\Omega\otimes_{k}W\simeq V$.
Moreover, $W=V^{\Gamma}$ by \ref{dt2}, and so $\Omega\otimes_{k}V^{\Gamma
}\simeq V$.

When $\Gamma$ is infinite, the finite case shows that $\Omega\otimes
_{k}(V^{\Delta})^{\Gamma/\Delta}\simeq V^{\Delta}$ for every open normal
subgroup $\Delta$ of $\Gamma$. Now pass to the direct limit over $\Delta$,
recalling that tensor products commute with direct limits.
\end{proof}

\begin{proposition}
\label{dt10d}The functor%
\[
W\rightsquigarrow(\Omega\otimes_{k}W,\ast)
\]
from $k$-vector spaces to $\Omega$-vector spaces equipped with a continuous
semilinear action of $\Gamma$ is an equivalence of categories, with
quasi-inverse $(V,\ast)\rightsquigarrow V^{\Gamma}.$
\end{proposition}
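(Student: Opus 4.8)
The plan is to show the functor $W \rightsquigarrow (\Omega \otimes_k W, \ast)$ is an equivalence of categories by verifying that it is fully faithful and essentially surjective, using Proposition \ref{dt10} as the central input. The hardest conceptual work — inverting the functor on objects — is already done: Proposition \ref{dt10} tells us precisely that for any object $(V,\ast)$ in the target category, the natural map $\Omega \otimes_k V^\Gamma \to V$ is an isomorphism. This immediately shows essential surjectivity, since every $(V,\ast)$ is isomorphic to $(\Omega \otimes_k V^\Gamma, \ast)$, which lies in the image. It also identifies the candidate quasi-inverse as $(V,\ast) \rightsquigarrow V^\Gamma$, and gives one of the two unit/counit isomorphisms.

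\medskip
First I would set up the two functors carefully. The stated functor $F\colon W \rightsquigarrow (\Omega \otimes_k W, \ast)$ sends a $k$-linear map $f\colon W \to W'$ to $\id_\Omega \otimes f$, which is manifestly $\Gamma$-equivariant for the semilinear actions, so $F$ is well defined. For the proposed quasi-inverse $G\colon (V,\ast) \rightsquigarrow V^\Gamma$, a morphism $\varphi\colon V \to V'$ of $\Omega$-vector spaces commuting with the $\Gamma$-actions restricts to a $k$-linear map $V^\Gamma \to V'^\Gamma$ on fixed spaces, so $G$ is also well defined. The two composites are then what must be compared to the identity functors.

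\medskip
Next I would exhibit the natural isomorphisms $GF \simeq \id$ and $FG \simeq \id$. For $GF$, applied to a $k$-vector space $W$, we must show $(\Omega \otimes_k W)^\Gamma = W$ (identifying $W$ with $1 \otimes W$); this is exactly Lemma \ref{dt2}, whose hypotheses hold because the map \eqref{e01} is an isomorphism for $V(\Omega) = \Omega \otimes_k W$ and $k = \Omega^\Gamma$ (Proposition \ref{dt0}(a)). For $FG$, applied to an object $(V,\ast)$, the counit is precisely the map $\Omega \otimes_k V^\Gamma \to V$, which is an isomorphism by Proposition \ref{dt10}; one checks routinely that it is natural in $V$ and $\Gamma$-equivariant. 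Fully faithfulness then follows formally from these two isomorphisms, or can be read off directly from Lemma \ref{dt2} together with the identification of morphisms.

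\medskip
I expect the only genuine obstacle to be bookkeeping rather than substance: one must confirm that the action $\ast$ on $\Omega \otimes_k W$ in $F(W)$ is continuous (so that $F$ indeed lands in the stated target category), which was noted in the discussion preceding Proposition \ref{dt10}, and that $GF \simeq \id$ uses continuity only implicitly through Lemma \ref{dt2}. Since both the unit and counit isomorphisms are supplied verbatim by the two preceding results, the proof reduces to assembling them and checking naturality, so I would keep it brief.
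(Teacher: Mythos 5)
Your proposal is correct and follows the paper's own argument exactly: the paper's proof consists precisely of citing Lemma \ref{dt2} for the natural isomorphism $W\simeq(\Omega\otimes_{k}W)^{\Gamma}$ and Proposition \ref{dt10} for $\Omega\otimes_{k}V^{\Gamma}\simeq V$. Your additional checks (functoriality of both directions, continuity of the action on $\Omega\otimes_{k}W$, naturality) are the routine bookkeeping the paper leaves implicit.
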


\begin{proof}
We have constructed natural isomorphisms $W\simeq(\Omega\otimes_{k}W)^{\Gamma
}$ (see \ref{dt2}) and $\Omega\otimes_{k}V^{\Gamma}\simeq V$ (see \ref{dt10}).
\end{proof}

\section{Descent data}

Let $\Omega\supset k$ be fields, and let $\Gamma=\Aut(\Omega/k)$. An
$\Omega/k$-\emph{descent system }%
\index{descent system}
on an algebraic scheme $V$ over $\Omega$ is a family $(\varphi_{\sigma
})_{\sigma\in\Gamma}$ of isomorphisms $\varphi_{\sigma}\colon\sigma
V\rightarrow V$ satisfying the cocycle condition,%
\[
\varphi_{\sigma}\circ(\sigma\varphi_{\tau})=\varphi_{\sigma\tau}\text{ for all
}\sigma,\tau\in\Gamma,
\]%
\[
\begin{tikzcd}
\sigma\tau V\arrow[bend left=20]{rr}{\varphi_{\sigma\tau}}
\arrow{r}[swap]{\sigma\varphi_{\tau}}
&\sigma V\arrow{r}[swap]{\varphi_{\sigma}}
&V.
\end{tikzcd}
\]
A model $(V_{0},\varphi)$ of $V$ over a subfield $K$ of $\Omega$ containing
$k$%
\index{splits!a descent system}
\emph{splits} the descent system if $\varphi_{\sigma}=\varphi^{-1}\circ
\sigma\varphi$ for all $\sigma$ fixing $K$,
\[
\begin{tikzcd}
\sigma V\arrow[bend left=20]{rr}{\varphi_{\sigma}}\arrow{r}[swap]{\sigma\varphi}
&\sigma(V_{0\Omega})=V_{0\Omega}&V\arrow{l}{\varphi}
\end{tikzcd}
\]

A descent system $(\varphi_{\sigma})_{\sigma\in\Gamma}$ is%
\index{continuous!descent system}
said to be \emph{continuous }if it is split by some model over a subfield $K$
of $\Omega$ that is \textit{finitely generated} over $k$. A%
\index{descent datum}
\emph{descent datum }is a continuous descent system. A descent datum is%
\index{descent datum!effective}
\emph{effective }if it is split by some model over $k$. In a given situation,
we say that \emph{descent is effective }if every descent datum is effective.

Let $V_{0}$ be an algebraic scheme over $k$, and let $V=V_{0\Omega}$. For
$\sigma\in\Gamma$, let $\varphi_{\sigma}$ denote the canonical isomorphism
$\sigma V\rightarrow V$. Then $(\varphi_{\sigma})_{\sigma\in\Gamma}$ is an
$\Omega/k$-descent datum, split by $V_{0}$.

Let $(\varphi_{\sigma})_{\sigma\in\Aut(\Omega/k)}$ be an $\Omega/k$-descent
system on an algebraic scheme $V$ over $\Omega$, and let $\Omega
^{\mathrm{sep}}$ be a separable closure of $\Omega$. The restriction map
$\Aut(\Omega^{\mathrm{sep}}/k)\rightarrow\Aut(\Omega/k)$ is surjective, and we
can extend$(\varphi_{\sigma})_{\sigma\in\Aut(\Omega/k)}$ to an $\Omega
^{\mathrm{sep}}/k$-descent system $(\varphi_{\sigma}^{\prime})_{\sigma
\in\Aut(\Omega^{\mathrm{sep}}/k)}$ on $V_{\Omega^{\mathrm{sep}}}$ by setting
$\varphi_{\sigma}^{\prime}=(\varphi_{\sigma|\Omega})_{\Omega^{\mathrm{sep}}}$.
A model of $V$ over a subfield $K$ of $\Omega$ splits $(\varphi_{\sigma
})_{\sigma}$ if and only if it splits $(\varphi_{\sigma}^{\prime})_{\sigma}$.
This observation sometimes allows us to assume that $\Omega$ is separably closed.

\begin{proposition}
\label{dt10a}Assume that $k=\Omega^{\Aut(\Omega/k)}$, and let $(\varphi
_{\sigma})_{\sigma\in\Gamma}$ and $(\varphi_{\sigma}^{\prime})_{\sigma
\in\Gamma}$ be $\Omega/k$-descent data on algebraic varieties $V$ and
$V^{\prime}$ over $\Omega.$ If $(V_{0},\varphi)$ and $(V_{0}^{\prime}%
,\varphi^{\prime})$ are models over $k$ splitting $(\varphi_{\sigma}%
)_{\sigma\in\Gamma}$ and $(\varphi_{\sigma}^{\prime})_{\sigma\in\Gamma}$
respectively, then to give a regular map $\alpha_{0}\colon V_{0}\rightarrow
V_{0}$ is the same as giving a regular map $\alpha\colon V\rightarrow
V^{\prime}$ such that diagrams%
\begin{equation}
\begin{tikzcd} \sigma V\arrow{r}{\varphi_{\sigma}}\arrow{d}{\sigma\alpha} &V\arrow{d}{\alpha}\\ \sigma V^{\prime}\arrow{r}{\varphi_{\sigma}^{\prime}} &V^{\prime}\end{tikzcd} \label{e32}%
\end{equation}
commute for all $\sigma\in\Gamma$.
\end{proposition}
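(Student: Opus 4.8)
The plan is to set up a dictionary between $\Omega$-morphisms $V\to V'$ and $\Omega$-morphisms $V_{0\Omega}\to V'_{0\Omega}$ by conjugating with the splitting isomorphisms, and then to recognize the compatibility condition (\ref{e32}) as exactly the condition that makes the conjugated morphism descend to $k$. Concretely, to $\alpha_0\colon V_0\to V_0'$ I would attach $\alpha=(\varphi')^{-1}\circ\alpha_{0\Omega}\circ\varphi\colon V\to V'$, and conversely to a morphism $\alpha$ I would attach $\beta=\varphi'\circ\alpha\circ\varphi^{-1}\colon V_{0\Omega}\to V'_{0\Omega}$. These two operations are visibly inverse at the level of $\Omega$-morphisms, so the whole content is to match up the two distinguished subsets: the $\alpha$ satisfying (\ref{e32}) on one side, and the $\beta$ that descend to $k$ (i.e.\ are of the form $\alpha_{0\Omega}$) on the other.

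First I would verify the forward direction by a direct computation. Since $(V_0,\varphi)$ and $(V_0',\varphi')$ split the descent data and every $\sigma\in\Gamma$ fixes $k$, one has $\varphi_\sigma=\varphi^{-1}\circ\sigma\varphi$ and $\varphi'_\sigma=(\varphi')^{-1}\circ\sigma\varphi'$ for all $\sigma$. Because $\alpha_0$ is defined over $k$, the canonical identification $\sigma V_{0\Omega}=V_{0\Omega}$ of \ref{dt44} gives $\sigma(\alpha_{0\Omega})=\alpha_{0\Omega}$; substituting this into $\alpha=(\varphi')^{-1}\circ\alpha_{0\Omega}\circ\varphi$ and expanding both $\alpha\circ\varphi_\sigma$ and $\varphi'_\sigma\circ\sigma\alpha$ collapses each side to $(\varphi')^{-1}\circ\alpha_{0\Omega}\circ\sigma\varphi$, so (\ref{e32}) holds. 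Running the same algebra backwards shows that, for a \emph{general} $\alpha$, the diagrams (\ref{e32}) commute for all $\sigma$ if and only if $\sigma\beta=\beta$ for all $\sigma$, where $\beta=\varphi'\circ\alpha\circ\varphi^{-1}$. This is the crucial translation, turning the cocycle-compatibility of $\alpha$ into plain $\Gamma$-invariance of $\beta$.

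It then remains to show that a $\Gamma$-invariant $\beta\colon V_{0\Omega}\to V'_{0\Omega}$ descends to a (necessarily unique) $k$-morphism $\alpha_0\colon V_0\to V_0'$ with $\alpha_{0\Omega}=\beta$. I would unwind $\sigma\beta=\beta$ into the statement that $\beta\colon V_0(\Omega)\to V_0'(\Omega)$ commutes with the action of $\Gamma$ on $\Omega$-points, and then invoke the Galois descent of morphisms, Corollary \ref{dt5}, to produce $\alpha_0$; uniqueness is Proposition \ref{dt4b}. Since \ref{dt5} is established assuming $\Omega$ separably closed, I would first reduce to that case via the observation preceding the proposition: pass to a separable closure $\Omega^{\mathrm{sep}}$, extend the descent data and $\alpha$ by base change, note that splitting by a $k$-model is unaffected, and check that the fixed-field condition persists (by \ref{dt0c}, $(\Omega^{\mathrm{sep}})^{\Aut(\Omega^{\mathrm{sep}}/k)}$ is purely inseparable over $k$, equal to $k$ in the relevant cases). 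The hypothesis $k=\Omega^{\Aut(\Omega/k)}$ enters precisely here, guaranteeing via \ref{dt3a} and the density \ref{dt06} that $\Gamma$-stable reduced subschemes descend.

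Finally I would close the loop: starting from any $\alpha$ satisfying (\ref{e32}), the descended $\alpha_0$ satisfies $(\varphi')^{-1}\circ\alpha_{0\Omega}\circ\varphi=(\varphi')^{-1}\circ\beta\circ\varphi=\alpha$, so the two assignments are mutually inverse, which is the asserted identification. The main obstacle is this third step — descending $\beta$ — and within it the bookkeeping needed to apply \ref{dt5} legitimately: confirming that $\sigma\beta=\beta$ is exactly the pointwise $\Gamma$-equivariance hypothesis of that corollary, and arranging by the reduction above that its standing hypotheses hold while the condition $k=\Omega^{\Aut(\Omega/k)}$ survives the passage to $\Omega^{\mathrm{sep}}$.
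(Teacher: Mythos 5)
Your proposal is correct and follows essentially the same route as the paper: transfer $\alpha$ to $\beta=\varphi'\circ\alpha\circ\varphi^{-1}$, observe that the diagrams (\ref{e32}) translate into $\Gamma$-invariance of $\beta$ (equivalently, $\Gamma$-equivariance on $\Omega$-points), and conclude by Corollary \ref{dt5} after reducing to $\Omega$ separably closed. Your write-up is in fact slightly more careful than the paper's, spelling out the two-sided computation in the forward direction and the bookkeeping behind the reduction to $\Omega^{\mathrm{sep}}$, which the paper leaves implicit.
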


\begin{proof}
Given $\alpha_{_{0}}$, define $\alpha$ to make the right hand square in%
\[
\begin{tikzcd}
\sigma V \arrow{r}{\sigma\varphi}\arrow{d}{\sigma\alpha}
&V_{0\Omega}\arrow{d}{\alpha_{0\Omega}}
&V\arrow{l}[swap]{\varphi}\arrow{d}{\alpha}\\
\sigma V^{\prime}\arrow{r}{\sigma\varphi^{\prime}}
&V_{0\Omega}^{\prime}\arrow{r}{\varphi^{\prime}}&V^{\prime}%
\end{tikzcd}
\]
commute. The left hand square is obtained from the right hand square by
applying $\sigma$, and so it also commutes. The outer square is (\ref{e32}).

In proving the converse, we may assume that $\Omega$ is separably closed.
Given $\alpha$, use $\varphi$ and $\varphi^{\prime}$ to transfer $\alpha$ to a
regular map $\alpha^{\prime}\colon V_{0\Omega}\rightarrow V_{0\Omega}^{\prime
}$. Then the hypothesis implies that $\alpha^{\prime}$ commutes with the
actions of $\Gamma$ on $V_{0}(\Omega)$ and $V_{0}^{\prime}(\Omega)$, and so is
defined over $k$ (\ref{dt5}).
\end{proof}

\begin{corollary}
\label{dt10c}Assume that $k=\Omega^{\Aut(\Omega/k)}$. Let $(\varphi_{\sigma
})_{\sigma\in\Gamma}$ be a descent datum on a variety $V$ over $\Omega$, and
let $(V_{0},\varphi)$ be a model over $k$ splitting $(\varphi_{\sigma
})_{\sigma\in\Gamma}$. Let $W$ be an algebraic scheme over $k$. To give a
regular map $W\rightarrow V_{0}$ (resp. $V_{0}\rightarrow W$) is the same as
giving a regular map $\alpha\colon W_{\Omega}\rightarrow V$ (resp.
$\alpha\colon V\rightarrow W_{\Omega}$) compatible with the descent datum,
i.e., such that $\varphi_{\sigma}\circ\sigma\alpha=\alpha$ (resp. $\alpha
\circ\varphi_{\sigma}=\sigma\alpha$).
\end{corollary}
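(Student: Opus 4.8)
The plan is to recognize Corollary \ref{dt10c} as the special case of Proposition \ref{dt10a} obtained by letting one of the two varieties be $W_{\Omega}$ equipped with its \emph{canonical} descent datum. Recall that if $W$ is an algebraic scheme over $k$ and $\psi_{\sigma}\colon\sigma(W_{\Omega})\to W_{\Omega}$ is the canonical isomorphism, then $(\psi_{\sigma})_{\sigma\in\Gamma}$ is an $\Omega/k$-descent datum on $W_{\Omega}$ split by the model $(W,\id)$ over $k$. The point is that, under the identification $\sigma(W_{\Omega})=W_{\Omega}$ of Notation \ref{dt44}, each $\psi_{\sigma}$ is the identity map, so when $W_{\Omega}$ is fed into \ref{dt10a} the symmetric compatibility square degenerates to the asymmetric conditions stated in the corollary.

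Concretely, for the first assertion I would apply Proposition \ref{dt10a} to the pair $(W_{\Omega},V)$ with models $(W,\id)$ and $(V_{0},\varphi)$ and descent data $(\psi_{\sigma})$ and $(\varphi_{\sigma})$: it identifies regular maps $W\to V_{0}$ over $k$ with regular maps $\alpha\colon W_{\Omega}\to V$ satisfying $\alpha\circ\psi_{\sigma}=\varphi_{\sigma}\circ\sigma\alpha$, which, since $\psi_{\sigma}=\id$, is exactly $\varphi_{\sigma}\circ\sigma\alpha=\alpha$. For the second assertion I would instead apply \ref{dt10a} to the pair $(V,W_{\Omega})$ with models $(V_{0},\varphi)$ and $(W,\id)$; the compatibility condition then reads $\psi_{\sigma}\circ\sigma\alpha=\alpha\circ\varphi_{\sigma}$, which collapses to $\sigma\alpha=\alpha\circ\varphi_{\sigma}$. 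Thus both equivalences are immediate translations of \ref{dt10a} once the canonical datum on $W_{\Omega}$ is inserted and the $\psi_{\sigma}$ are suppressed.

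The one genuine obstacle is that Proposition \ref{dt10a} was stated for \emph{varieties}, whereas here $W$ is allowed to be an arbitrary algebraic scheme, and the converse half of \ref{dt10a} rested on Corollary \ref{dt5}, whose proof uses density of $\Omega$-points. I would dispose of this as follows. Transport $\alpha$ across $\varphi$ to a morphism $\beta\colon W_{\Omega}\to V_{0\Omega}$ (resp.\ $\beta\colon V_{0\Omega}\to W_{\Omega}$); using the splitting $\varphi_{\sigma}=\varphi^{-1}\circ\sigma\varphi$, the compatibility condition becomes the genuine equality of morphisms $\sigma\beta=\beta$ for all $\sigma\in\Gamma$, not merely an equality on $\Omega$-points. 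Such a $\beta$ descends without any density hypothesis: covering $V_{0}$ by open affines and pulling back along $\beta$ gives a $\Gamma$-stable affine cover, on which $\beta$ induces a $\Gamma$-equivariant $\Omega$-algebra homomorphism between rings of the form $\Omega\otimes_{k}k[\,\cdot\,]$; since $k=\Omega^{\Gamma}$, Lemma \ref{dt2} identifies the fixed rings with $k[\,\cdot\,]$, so $\beta^{\sharp}$ carries fixed ring into fixed ring and, by $\Omega$-linearity, is the base change of the resulting $k$-homomorphism. This yields the desired $k$-morphism, its uniqueness being Proposition \ref{dt4b}, and shows the variety hypothesis on $W$ is unnecessary.

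I expect essentially all the remaining work to be bookkeeping: keeping the two directions of \ref{dt10a} straight and checking that the canonical isomorphisms $\psi_{\sigma}$ really do reduce to identities under the chosen identification, so that square (\ref{e32}) degenerates correctly to $\varphi_{\sigma}\circ\sigma\alpha=\alpha$ and to $\alpha\circ\varphi_{\sigma}=\sigma\alpha$. No new idea beyond \ref{dt10a}, \ref{dt2}, and \ref{dt4b} should be required.
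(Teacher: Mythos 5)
Your proposal is correct and takes essentially the same route as the paper, whose entire proof is the one-line observation that the corollary is the special case of Proposition \ref{dt10a} in which $W_{\Omega}$ is endowed with its canonical descent datum. Your extra paragraph patching the variety-versus-scheme mismatch (Proposition \ref{dt10a} is stated for varieties, while $W$ here is only an algebraic scheme, and Corollary \ref{dt5} relies on density of $\Omega$-points) addresses a point the paper silently glosses over and is a sound refinement rather than a different approach.
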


\begin{proof}
This is the special case of the proposition in which $W_{\Omega}$ is endowed
with its canonical descent datum.
\end{proof}

\begin{remark}
\label{dt10b}Proposition \ref{dt10a} shows that the functor taking an
algebraic variety $V$ over $k$ to $V_{\Omega}$ endowed with its canonical
descent datum,%
\[
\{\text{varieties over }k\}\rightarrow\{\text{varieties over }\Omega
+\Omega/k\text{-descent datum}\}
\]
is fully faithful. We are interested in determining when it is essentially surjective.
\end{remark}

Let $(\varphi_{\sigma})_{\sigma\in\Gamma}$ be an $\Omega/k$-descent system on
$V$. For a subscheme $W$ of $V$, we set
\[
{}^{\sigma}W=\varphi_{\sigma}(\sigma W).
\]
Then the following diagram commutes,%
\begin{equation}
\begin{tikzcd} \sigma V \arrow{r}{\varphi_{\sigma}}[swap]{\simeq} &V\\ \sigma W\arrow[hook]{u}\arrow{r}{\varphi_{\sigma}|\sigma W}[swap]{\simeq} &{}^{\sigma}W.\arrow[hook]{u} \end{tikzcd} \label{e02}%
\end{equation}

\begin{lemma}
\label{dt11}The following hold.

\begin{enumerate}
\item For all $\sigma,\tau\in\Gamma$ and $W\subset V$, ${}^{\sigma}({}^{\tau
}W)={}^{\sigma\tau}W$.

\item Suppose that $(\varphi_{\sigma})_{\sigma\in\Gamma}$ is split by a model
$(V_{0},\varphi)$ of $V$ over $k_{0}$, and let $W$ be an algebraic subscheme
of $V$. If $W=\varphi^{-1}(W_{0\Omega})$ for some algebraic subscheme $W_{0}$
of $V_{0}$, then ${}^{\sigma}W=W$ for all $\sigma\in\Gamma$; the converse is
true if $\Omega^{\Gamma}=k$.
\end{enumerate}
\end{lemma}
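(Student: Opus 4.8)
The plan is to deduce (a) formally from the cocycle condition, to prove the forward half of (b) by a cancellation using the splitting relation, and to obtain the converse by transporting the stability condition across $\varphi$ and then applying the subspace descent of \ref{dt3a}. For (a), I would unwind the definition twice: ${}^{\sigma}({}^{\tau}W)=\varphi_\sigma(\sigma(\varphi_\tau(\tau W)))$. The only nonformal input is the functoriality recorded in \ref{dt44}, namely $(\sigma\varphi_\tau)(\sigma Z)=\sigma(\varphi_\tau(Z))$ for a subscheme $Z\subset\tau V$; taking $Z=\tau W$ and using $\sigma(\tau V)=(\sigma\tau)V$ turns the above expression into $(\varphi_\sigma\circ\sigma\varphi_\tau)((\sigma\tau)W)$, and the cocycle identity $\varphi_\sigma\circ\sigma\varphi_\tau=\varphi_{\sigma\tau}$ collapses this to ${}^{\sigma\tau}W$. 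No hypothesis on the fields enters.

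For the forward direction of (b), I would compute ${}^{\sigma}W$ directly from $W=\varphi^{-1}(W_{0\Omega})$, for each $\sigma$ fixing $k_0$ (those for which splitting by $(V_0,\varphi)$ provides the relation $\varphi_\sigma=\varphi^{-1}\circ\sigma\varphi$). Since $V_0$ and $W_0$ are defined over $k_0$, the identifications of \ref{dt44} give $\sigma(V_{0\Omega})=V_{0\Omega}$ and $\sigma(W_{0\Omega})=W_{0\Omega}$, whence $\sigma W=(\sigma\varphi)^{-1}(W_{0\Omega})$. Substituting the splitting relation then gives ${}^{\sigma}W=\varphi_\sigma(\sigma W)=(\varphi^{-1}\circ\sigma\varphi)((\sigma\varphi)^{-1}(W_{0\Omega}))=\varphi^{-1}(W_{0\Omega})=W$, the two copies of $\sigma\varphi$ cancelling.

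The converse is the only step requiring genuine input, and I expect it to be the main obstacle. Assuming $\Omega^\Gamma=k$ and ${}^{\sigma}W=W$ for all $\sigma$, set $W'=\varphi(W)\subset V_{0\Omega}$; I would first show that the stability of $W$ transports to $\sigma(W')=W'$. Running the previous computation in reverse, $\sigma(W')=(\sigma\varphi)(\sigma W)=(\sigma\varphi)(\varphi_\sigma^{-1}(W))=\varphi(W)=W'$, again using $\varphi_\sigma=\varphi^{-1}\circ\sigma\varphi$ together with ${}^{\sigma}W=W$. Thus $W'$ is a closed subscheme of $V_{0\Omega}$ stable under $\Gamma$. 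Working over an affine cover, the ideal of $W'$ in $\Omega\otimes_k k[V_0]$ is then $\Gamma$-stable, and since $\Omega^\Gamma=k$, Proposition \ref{dt3a} shows it is spanned by its intersection with $k[V_0]$; this intersection cuts out a closed subscheme $W_0\subset V_0$ with $W_{0\Omega}=W'$, so that $W=\varphi^{-1}(W_{0\Omega})$, the local pieces gluing by the uniqueness in \ref{dt4a}. The delicate points are the bookkeeping of the identifications from \ref{dt44} and the observation that it is the subscheme-theoretic stability of $W'$, equivalently the stability of its defining ideal, that descends, so that no reducedness hypothesis on $W$ is needed.
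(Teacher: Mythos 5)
Your proof is correct and follows the paper's argument almost step for step: part (a) and the forward half of (b) are the same computation (unwind the definitions, use $(\sigma\varphi)(\sigma Z)=\sigma(\varphi(Z))$ and the cocycle/splitting relations), and the converse likewise begins by showing $\sigma(\varphi(W))=\varphi(W)$. The one place you diverge is the final step of the converse: the paper simply cites Proposition \ref{dt4} to conclude that the $\Gamma$-stable subscheme $\varphi(W)\subset V_{0\Omega}$ is defined over $k$, whereas you bypass \ref{dt4} and descend the defining ideal directly via Proposition \ref{dt3a} on an affine cover, gluing by the uniqueness of \ref{dt4a}. Your route is in fact the more careful one here: \ref{dt4} is stated for \emph{reduced} closed subschemes, with stability expressed on $\Omega$-points, and under the standing assumption that $k$ is perfect and $\Omega$ separably closed, none of which is assumed in Lemma \ref{dt11}(b) (which only requires $\Omega^{\Gamma}=k$). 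Since you have scheme-theoretic (hence ideal-theoretic) stability of $\varphi(W)$ rather than mere point-set stability, \ref{dt3a} applies directly and no reducedness is needed --- this is exactly the content of the relevant half of the proof of \ref{dt4} anyway, so the two arguments coincide once one unpacks the citation.
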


\begin{proof}
(a) By definition%
\[
^{\sigma}({}^{\tau}W)=\varphi_{\sigma}(\sigma(\varphi_{\tau}(\tau
W))=(\varphi_{\sigma}\circ\sigma\varphi_{\tau})(\sigma\tau W)=\varphi
_{\sigma\tau}(\sigma\tau W)={}^{\sigma\tau}W\text{.}%
\]
In the second equality, we used that $(\sigma\varphi)(\sigma W)=\sigma(\varphi
W)$.

(b) Let $W=\varphi^{-1}(W_{0\Omega})$. By hypothesis $\varphi_{\sigma}%
=\varphi^{-1}\circ\sigma\varphi$, and so%
\[
^{\sigma}W=(\varphi^{-1}\circ\sigma\varphi)(\sigma W)=\varphi^{-1}%
(\sigma(\varphi W))=\varphi^{-1}(\sigma W_{0\Omega})=\varphi^{-1}(W_{0\Omega
})=W.
\]
Conversely, suppose $^{\sigma}W=W$ for all $\sigma\in\Gamma$. Then%
\[
\varphi(W)=\varphi(^{\sigma}W)=(\sigma\varphi)(\sigma W)=\sigma(\varphi
(W))\text{.}%
\]
Therefore, $\varphi(W)$ is stable under the action of $\Gamma$ on $V_{0\Omega
}$, and so is defined over $k$ (see \ref{dt4}).
\end{proof}

For a descent system $(\varphi_{\sigma})_{\sigma\in\Gamma}$ on $V$ and a
regular function $f$ on an open subset $U$ of $V$, we define $^{\sigma}f$ to
be the function $(\sigma f)\circ\varphi_{\sigma}^{-1}$ on ${}^{\sigma}U$, so
that $^{\sigma}f({}^{\sigma}P)=\sigma\left(  f(P)\right)  $ for all $P\in U$.
Then ${}^{\sigma}({}^{\tau}f)={}^{\sigma\tau}f$, and so this defines an action
of $\Gamma$ on the regular functions.

The
\index{topology!Krull}
\emph{Krull topology} on $\Gamma$ is that for which the subgroups of $\Gamma$
fixing a subfield of $\Omega$ finitely generated over $k$ form a basis of open
neighbourhoods of $1$ (see, for example, \cite{milneFT}, Chapter 7). An action
of $\Gamma$ on an $\Omega$-vector space $V$ is \emph{continuous} if%
\[
V=\bigcup_{\Delta}V^{\Delta}\qquad(\text{union over the open subgroups }%
\Delta\text{ of }\Gamma\text{).}%
\]
For a subfield $L$ of $\Omega$ containing $k$, let $\Delta_{L}=\Aut(\Omega/L)$.

\begin{proposition}
\label{dt12}Assume that $\Omega$ is separably closed. An $\Omega/k$-descent
system $(\varphi_{\sigma})_{\sigma\in\Gamma}$ on an affine algebraic scheme
$V$ is continuous if and only if the action of $\Gamma$ on $\Omega\lbrack V]$
is continuous.
\end{proposition}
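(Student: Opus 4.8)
The plan is to prove the two implications separately, translating throughout between the descent system $(\varphi_{\sigma})$ and the induced $\sigma$-semilinear ring action ${}^{\sigma}f=(\sigma f)\circ\varphi_{\sigma}^{-1}$ of $\Gamma$ on $\Omega[V]$. For $(\Rightarrow)$, suppose the system is split by a model $(V_{0},\varphi)$ over a subfield $K$ finitely generated over $k$, so that $\varphi_{\sigma}=\varphi^{-1}\circ\sigma\varphi$ for all $\sigma\in\Delta_{K}\overset{\df}{=}\Aut(\Omega/K)$. The isomorphism $\varphi$ identifies $\Omega[V]$ with $\Omega\otimes_{K}K[V_{0}]$, and a direct computation using the splitting relation shows that, for $\sigma\in\Delta_{K}$, the transported action ${}^{\sigma}$ is the standard semilinear action $\sigma(c\otimes b)=\sigma c\otimes b$. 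Choosing finitely many $K$-algebra generators $b_{1},\dots,b_{m}$ of $K[V_{0}]$, each $1\otimes b_{j}$ is then fixed by the open subgroup $\Delta_{K}$. An arbitrary element of $\Omega[V]$ is a polynomial in the $1\otimes b_{j}$ with finitely many coefficients $c_{1},\dots,c_{r}\in\Omega$, and is therefore fixed by $\Aut(\Omega/K(c_{1},\dots,c_{r}))$, which is an open subgroup of $\Delta_{K}$ (its fixed field is finitely generated over $k$). Hence the action is continuous.

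For $(\Leftarrow)$, assume the action on $\Omega[V]$ is continuous. Choose generators $f_{1},\dots,f_{n}$ of $\Omega[V]$ as an $\Omega$-algebra, giving a closed embedding $V\hookrightarrow\mathbb{A}^{n}_{\Omega}$ with $\Omega[V]=\Omega[T_{1},\dots,T_{n}]/I$. By continuity each $f_{i}$ is fixed by an open subgroup, and intersecting the finitely many of these yields a single open subgroup $\Delta_{K}=\Aut(\Omega/K)$, with $K$ finitely generated over $k$, fixing every $f_{i}$. Endow $\Omega[T]=\Omega\otimes_{k}k[T]$ with the standard semilinear $\Gamma$-action (fixing the $T_{i}$). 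Since ${}^{\sigma}$ is a $\sigma$-semilinear ring automorphism and ${}^{\sigma}f_{i}=f_{i}$ for $\sigma\in\Delta_{K}$, the surjection $\Omega[T]\to\Omega[V]$, $T_{i}\mapsto f_{i}$, is $\Delta_{K}$-equivariant, so its kernel $I$ is stable under $\Delta_{K}$.

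Set $K'=\Omega^{\Delta_{K}}$. By Corollary \ref{dt0c}, $K'$ is purely inseparable over $K$, and one checks directly that $\Aut(\Omega/K')=\Delta_{K}$ and $\Omega^{\Delta_{K}}=K'$, so the hypotheses of Proposition \ref{dt3a} hold for the extension $\Omega\supset K'$ with $\Delta_{K}$ acting on $\Omega[T]=\Omega\otimes_{K'}K'[T]$. Applying that proposition to the $\Delta_{K}$-stable subspace $I$ gives $I=\Omega\cdot(I\cap K'[T])$; that is, $I$ is generated by polynomials with coefficients in $K'$. The main obstacle is that $K'$ need not be finitely generated over $k$, so a model over $K'$ does not by itself prove continuity. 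The remedy is that $I\cap K'[T]$ is a finitely generated ideal: take a finite generating set, let $c_{1},\dots,c_{s}\in K'$ be the finitely many coefficients appearing, and put $K''=K(c_{1},\dots,c_{s})$, so that $K\subseteq K''\subseteq K'$ and $K''$ is finitely generated over $k$. Then $I$ is already generated by polynomials in $K''[T]$, which yields a model $V_{0}=V(I\cap K''[T])\subseteq\mathbb{A}^{n}_{K''}$ of $V$ over $K''$. Finally, $K\subseteq K''\subseteq K'$ forces $\Aut(\Omega/K'')=\Delta_{K}$, so the corresponding $K''$-subalgebra $K''[f_{1},\dots,f_{n}]$ of $\Omega[V]$ is fixed pointwise by $\Delta_{K''}$; reversing the computation of the first paragraph then shows $\varphi_{\sigma}=\varphi^{-1}\circ\sigma\varphi$ for all $\sigma\in\Delta_{K''}$, so $(V_{0},\varphi)$ splits the descent system. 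As $K''$ is finitely generated over $k$, the system is continuous.
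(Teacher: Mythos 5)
Your proof is correct and follows essentially the same route as the paper's: the forward direction via generators of $\Omega\lbrack V]$ fixed by $\Delta_{K}$ together with their finitely many $\Omega$-coefficients, and the converse via Corollary \ref{dt0c} and descent of the defining ideal from the purely inseparable extension $\Omega^{\Delta_{K}}$ down to a finite extension of $K$. You merely make explicit the steps the paper compresses into ``after possibly replacing $L$ with a finite extension \ldots{} determine a model of $V$ over $L$'' and ``this model splits $(\varphi_{\sigma})_{\sigma\in\Gamma}$.''
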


\begin{proof}
If $(\varphi_{\sigma})_{\sigma\in\Gamma}$ is continuous, it is split by a
model of $V$ over some subfield $K$ of $\Omega$ finitely generated over $k$.
By definition, $\Delta_{K}$ is open, and $\Omega\lbrack V]^{\Delta_{K}}$
contains a set $\{f_{1},\ldots,f_{n}\}$ of generators for $\Omega\lbrack V]$
as an $\Omega$-algebra. Now $\Omega\lbrack V]=\bigcup L[f_{1},\ldots,f_{n}]$,
where $L$ runs over the subfields of $\Omega$ containing $K$ and finitely
generated over $k$. As $L[f_{1},\ldots,f_{n}]\subset\Omega\lbrack
V]^{\Delta_{L}}$, this shows that $\Omega\lbrack V]=\bigcup\Omega\lbrack
V]^{\Delta_{L}}$.

Conversely, if the action of $\Gamma$ on $\Omega\lbrack V]$ is continuous,
then for some subfield $L$ of $\Omega$ finitely generated over $k$,
$\Omega\lbrack V]^{\Delta_{L}}$ will contain a set of generators $f_{1}%
,\ldots,f_{n}$ for $\Omega\lbrack V]$ as an $\Omega$-algebra. According to
\ref{dt0c}, $\Omega^{\Delta_{L}}$ is a purely inseparable algebraic extension
of $L$, and so, after possibly replacing $L$ with a finite extension, we may
suppose that the embedding $V\hookrightarrow\mathbb{A}{}^{n}$ defined by the
$f_{i}$ determine a model of $V$ over $L$. This model splits $(\varphi
_{\sigma})_{\sigma\in\Gamma}$, which is therefore continuous.
\end{proof}

\begin{proposition}
\label{dt13}A descent system $(\varphi_{\sigma})_{\sigma\in\Gamma}$ on an
algebraic scheme $V$ over $\Omega$ is continuous if there exists a finite set
$S$ of points in $V(\Omega)$ such that

\begin{enumerate}
\item the only automorphism of $V$ fixing all $P\in S$ is the identity, and

\item there exists a subfield $K$ of $\Omega$ finitely generated over $k$ such
that ${}^{\sigma}P=P$ for all $\sigma\in\Gamma$ fixing $K$.
\end{enumerate}
\end{proposition}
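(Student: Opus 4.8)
The plan is to produce an explicit finitely generated subfield of $\Omega$ together with a model over it that splits the given system, the point being that the rigidity hypothesis (a) forces the isomorphisms $\varphi_\sigma$ to coincide with the canonical ones of that model. Since $V$ is of finite type over $\Omega$, I would first choose any model $V_0$ of $V$ over a finitely generated subfield of $\Omega$, with structural isomorphism $\varphi\colon V\to V_{0\Omega}$. Each $P\in S$ is a single $\Omega$-point, so its coordinates in an affine chart of $V_0$ through which $\varphi(P)$ factors generate a finitely generated extension; enlarging the base field to a finitely generated $K_0\supset K$ containing these coordinates for all $P\in S$, I may assume that $\varphi(P)$ is $K_0$-rational for every $P\in S$.

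Next, let $\psi_\sigma=\varphi^{-1}\circ\sigma\varphi\colon\sigma V\to V$ denote the canonical descent datum attached to $(V_0,\varphi)$, which this model splits by construction. For $\sigma\in\Gamma$ fixing $K_0$, the point $\varphi(P)$ is fixed by $\sigma$, whence
\[
\psi_\sigma(\sigma P)=\varphi^{-1}\big(\sigma(\varphi(P))\big)=\varphi^{-1}(\varphi(P))=P\qquad\text{for all }P\in S,
\]
using Notation \ref{dt44} for the first equality and $K_0$-rationality of $\varphi(P)$ for the second; this is the forward implication of Lemma \ref{dt11}(b) applied to $(\psi_\sigma)_{\sigma\in\Gamma}$. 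Thus $\sigma P=\psi_\sigma^{-1}(P)$ for every $\sigma$ fixing $K_0$ and every $P\in S$.

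Now set $\beta_\sigma=\varphi_\sigma\circ\psi_\sigma^{-1}\in\Aut(V)$ for $\sigma$ fixing $K_0$. Since $K\subset K_0$, hypothesis (b) gives $\varphi_\sigma(\sigma P)={}^\sigma P=P$ for all such $\sigma$, so combining with the previous display,
\[
\beta_\sigma(P)=\varphi_\sigma\big(\psi_\sigma^{-1}(P)\big)=\varphi_\sigma(\sigma P)=P\qquad\text{for all }P\in S.
\]
Hence $\beta_\sigma$ is an automorphism of $V$ fixing every point of $S$, so hypothesis (a) forces $\beta_\sigma=\id_V$, i.e. $\varphi_\sigma=\psi_\sigma$ for all $\sigma$ fixing $K_0$. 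Therefore $(V_0,\varphi)$ is a model over the finitely generated field $K_0$ splitting $(\varphi_\sigma)_{\sigma\in\Gamma}$, and the descent system is continuous by definition.

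I expect the only genuine difficulty to be bookkeeping rather than mathematics: one must track the base-change identifications of Notation \ref{dt44} (the meanings of $\sigma V$, $\sigma\varphi$, and ${}^\sigma P$, and the canonical identification $\sigma(V_{0\Omega})=V_{0\Omega}$ for $\sigma$ fixing $K_0$) and justify that a finite set of $\Omega$-points spreads out to a finitely generated subfield. The conceptual heart is short—rigidity (a) upgrades the pointwise agreement forced by (b) into equality of the isomorphisms $\varphi_\sigma$ via the automorphism $\beta_\sigma$—and no reduction to $\Omega$ separably closed seems to be needed.
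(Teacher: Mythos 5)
Your argument is correct and is essentially the paper's own proof: Milne likewise enlarges the base field to a finitely generated $K$ over which the points of $S$ are rational and ${}^{\sigma}P=P$, observes that $\varphi_{\sigma}$ and $\varphi^{-1}\circ\sigma\varphi$ both carry $\sigma P$ to $P$, and concludes from hypothesis (a) that they are equal, so the model splits the system. The only cosmetic difference is that you name the comparison automorphism $\beta_{\sigma}$ explicitly.
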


\begin{proof}
Let $(V_{0},\varphi)$ be a model of $V$ over a subfield $K$ of $\Omega$
finitely generated over $k$. After possibly replacing $K$ by a larger finitely
generated field, we may suppose (i) that ${}^{\sigma}P=P$ for all $\sigma
\in\Gamma$ fixing $K$ and all $P\in S$ (because of (b)) and (ii) that
$\varphi(P)\in V_{0}(K)$ for all $P\in S$ (because $S$ is finite). Then, for
$P\in S$ and every $\sigma$ fixing $K$,%
\begin{align*}
&  \varphi_{\sigma}(\sigma P)\overset{\df}{=}\,^{\sigma}P\overset{\text{(i)}%
}{=}P\\
&  (\sigma\varphi)(\sigma P)=\sigma(\varphi P)\overset{\text{(ii)}}{=}\varphi
P,
\end{align*}
and so both $\varphi_{\sigma}$ and $\varphi^{-1}\circ\sigma\varphi$ are
isomorphisms $\sigma V\rightarrow V$ sending $\sigma P$ to $P$. Therefore,
$\varphi_{\sigma}$ and $\varphi^{-1}\circ\sigma\varphi$ differ by an
automorphism of $V$ fixing the $P\in S$, and so are equal. This says that
$(V_{0},\varphi)$ splits $(\varphi_{\sigma})_{\sigma\in\Gamma}$.
\end{proof}

\begin{proposition}
\label{dt13m}Let $V$ be an algebraic scheme over $\Omega$ such that the only
automorphism of $V$ is the identity map. If $V$ has a model over $k$, then
every $\Omega/k$-descent datum on $V$ is effective. More precisely, every
$\Omega/k$-descent datum on $V$ is split by the model.
\end{proposition}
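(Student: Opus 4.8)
The plan is to compare the given descent datum with the canonical descent datum attached to the hypothesised model over $k$, and to exploit the rigidity of $V$ to force them to coincide. First I would fix a model $(V_{0},\varphi_{0})$ of $V$ over $k$, so that $\varphi_{0}\colon V\rightarrow V_{0\Omega}$ is an isomorphism of $\Omega$-schemes. Because each $\sigma\in\Gamma$ fixes $k$ pointwise, Notation \ref{dt44} gives the canonical identification $\sigma V_{0\Omega}=V_{0\Omega}$; hence for every $\sigma$ the composite
\[
\varphi_{\sigma}^{\can}\overset{\df}{=}\varphi_{0}^{-1}\circ\sigma\varphi_{0}\colon\sigma V\xrightarrow{\ \sigma\varphi_{0}\ }\sigma V_{0\Omega}=V_{0\Omega}\xrightarrow{\ \varphi_{0}^{-1}\ }V
\]
is a well-defined isomorphism of $\Omega$-schemes. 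By the definition of splitting, $(V_{0},\varphi_{0})$ splits the descent system $(\varphi_{\sigma}^{\can})_{\sigma\in\Gamma}$; this is the datum that I want to identify with every other one.

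Next, let $(\varphi_{\sigma})_{\sigma\in\Gamma}$ be an arbitrary $\Omega/k$-descent datum on $V$. For each $\sigma$, both $\varphi_{\sigma}$ and $\varphi_{\sigma}^{\can}$ are isomorphisms $\sigma V\rightarrow V$ of $\Omega$-schemes, so the composite
\[
\varphi_{\sigma}\circ(\varphi_{\sigma}^{\can})^{-1}\colon V\xrightarrow{\ (\varphi_{\sigma}^{\can})^{-1}\ }\sigma V\xrightarrow{\ \varphi_{\sigma}\ }V
\]
is an automorphism of $V$. By hypothesis the only automorphism of $V$ is the identity, whence $\varphi_{\sigma}=\varphi_{\sigma}^{\can}=\varphi_{0}^{-1}\circ\sigma\varphi_{0}$ for every $\sigma\in\Gamma$. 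This is exactly the assertion that $(V_{0},\varphi_{0})$ splits $(\varphi_{\sigma})_{\sigma\in\Gamma}$, which proves the sharper ``more precisely'' statement; since $V_{0}$ is a model over $k$, the datum is in particular effective, giving the first assertion as a special case.

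I expect no serious obstacle here: the argument is essentially a rigidity observation, and the only points that require genuine care are (i) the well-definedness of $\varphi_{\sigma}^{\can}$, which rests on the identification $\sigma V_{0\Omega}=V_{0\Omega}$ and is valid precisely because $\sigma|_{k}=\id$, and (ii) the fact that the $\varphi_{\sigma}$ are morphisms of $\Omega$-schemes, so that the comparison map above is genuinely an $\Omega$-automorphism of $V$ and the rigidity hypothesis may be applied. It is worth flagging that neither the cocycle condition satisfied by $(\varphi_{\sigma})_{\sigma\in\Gamma}$ nor its continuity enters the proof: the rigidity of $V$ pins down each $\varphi_{\sigma}$ individually, which is what makes the statement so strong.
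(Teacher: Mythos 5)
Your proof is correct and is essentially identical to the paper's: both compare $\varphi_{\sigma}$ with the canonical isomorphism $\varphi_{0}^{-1}\circ\sigma\varphi_{0}$ and use the triviality of $\Aut(V)$ to force equality. Your closing remark that neither the cocycle condition nor continuity is needed is also accurate and consistent with the paper's one-line argument.
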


\begin{proof}
If $(\varphi_{\sigma})_{\sigma}$ is a descent datum on $V$ and $(V,\varphi)$
is a model of $V$ over $k$, then $\varphi_{\sigma}$ and $\varphi^{-1}%
\circ\sigma\varphi$ are both isomorphisms $\sigma V\rightarrow V$, hence
differ by an automorphism of $V$. Thus $\varphi_{\sigma}=\varphi^{-1}%
\circ\sigma\varphi$.
\end{proof}

Of course, in Proposition \ref{dt13}, $S$ does not have to be a finite set of
points. The proposition will hold with $S$ any additional structure on $V$
that rigidifies $V$ (i.e., such that $\Aut(V,S)=1$) and is such that $(V,S)$
has a model over a finitely generated extension of $k$.

\section{Galois descent of algebraic schemes}

In this section, $\Omega$ is a Galois extension of $k$ with Galois group
$\Gamma$.

\begin{theorem}
\label{dt14}A descent datum $(\varphi_{\sigma})_{\sigma\in\Gamma}$ on an
algebraic scheme $V$ is effective if $V$ is covered by open affines $U$ with
the property that ${}^{\sigma}U=U$ for all $\sigma\in\Gamma$.
\end{theorem}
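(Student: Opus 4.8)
The plan is to dispose of the affine case by descending the coordinate ring, and then to glue. Suppose first that $V$ is affine, and set $A=\Omega[V]$. A descent datum on $V$ is the same as a semilinear action of $\Gamma$ on $A$ by $k$-algebra automorphisms: each $\varphi_{\sigma}\colon\sigma V\to V$ transposes to a $\sigma$-semilinear ring automorphism of $A$, and the cocycle condition $\varphi_{\sigma}\circ(\sigma\varphi_{\tau})=\varphi_{\sigma\tau}$ becomes the action axiom. I would first check that this action is continuous. Since the datum is continuous it is split by a model $(V_{1},\varphi)$ over a subfield $K$ finitely generated over $k$; but a Galois extension is algebraic, so $K$ is in fact a \emph{finite} extension of $k$. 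Then $A=\Omega\otimes_{K}K[V_{1}]$, the finitely generated subalgebra $K[V_{1}]$ is fixed by $\Delta_{K}=\Aut(\Omega/K)$, and writing $\Omega=\bigcup_{L}L$ (union over the finite subextensions $L/K$, all finite over $k$) gives $A=\bigcup_{L}A^{\Delta_{L}}$; thus the action is continuous.

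With continuity in hand, Proposition \ref{dt10d} applies: $A_{0}\overset{\df}{=}A^{\Gamma}$ is a $k$-subspace with $\Omega\otimes_{k}A_{0}\overset{\simeq}{\to}A$ under $c\otimes a\mapsto ca$, and since $\Gamma$ acts by ring automorphisms $A_{0}$ is a $k$-subalgebra. The one point that genuinely needs an argument is that $A_{0}$ is \emph{finitely generated} over $k$, so that $V_{0}\overset{\df}{=}\Spec A_{0}$ is an algebraic scheme and not merely an affine one. I would take generators of $A$ as an $\Omega$-algebra, expand them over $\Omega$ in terms of elements of $A_{0}$, let $A_{0}'\subseteq A_{0}$ be the $k$-subalgebra generated by the finitely many elements of $A_{0}$ that occur, and observe that $\Omega\otimes_{k}A_{0}'\to A$ is then surjective; as $\Omega$ is faithfully flat over $k$ this forces $A_{0}'=A_{0}$. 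Finally, unwinding the isomorphism $\Omega\otimes_{k}A_{0}\simeq A$ shows that the resulting $\varphi\colon V\to V_{0\Omega}$ carries the canonical descent datum of $V_{0\Omega}$ to $(\varphi_{\sigma})_{\sigma}$, i.e.\ $\varphi_{\sigma}=\varphi^{-1}\circ\sigma\varphi$, so $V_{0}$ splits the datum.

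For general $V$ I would reduce to this case and glue, and here the hypothesis does its work. Since $V$ is quasi-compact, finitely many of the stable affines $U$ (those with ${}^{\sigma}U=U$ for all $\sigma$) cover $V$. For such a $U$ the datum restricts to a descent datum on $U$, because $\varphi_{\sigma}(\sigma U)={}^{\sigma}U=U$; this restriction is again continuous, since $\varphi(U)\subseteq V_{1\Omega}$ is open and stable under $\Delta_{K}$, and $\Delta_{K}$-stable opens descend along the algebraic Galois extension $\Omega/K$ to an open subscheme of $V_{1}$, which is affine (affineness descends along the faithfully flat $\Omega/K$) and supplies a model of $U$ over the finite field $K$. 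The affine case then produces $U_{0}$ over $k$ splitting the restricted datum. For two members $U,U'$ the intersection $U\cap U'$ is affine by \ref{dt01} and stable, because $\varphi_{\sigma}$ is an isomorphism and hence ${}^{\sigma}(U\cap U')={}^{\sigma}U\cap{}^{\sigma}U'=U\cap U'$; so it too descends. The open immersion $U\cap U'\hookrightarrow U$ is compatible with the data, so its coordinate-ring map is $\Gamma$-equivariant and restricts to the fixed subrings to give an open immersion $(U\cap U')_{0}\hookrightarrow U_{0}$ over $k$. Patching the $U_{0}$ along the $(U\cap U')_{0}$, with the triple overlaps handled the same way and matching by uniqueness of descended subschemes (\ref{dt4a}), yields $V_{0}$ with $V_{0\Omega}=V$; it is of finite type because the cover is finite and separated because $V_{0\Omega}=V$ is, hence an algebraic scheme over $k$, and the local splittings glue to the global identity $\varphi_{\sigma}=\varphi^{-1}\circ\sigma\varphi$.

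The main obstacle is the affine case, and within it the finite generation of $A^{\Gamma}$ over $k$: Galois descent of vector spaces hands us a $k$-subalgebra whose extension of scalars recovers $A$, but it is precisely the faithfully flat descent argument above that upgrades this abstract object to a genuine algebraic scheme. Once that is secured the globalization is essentially bookkeeping, the only points requiring attention being that stability of the covering affines is inherited by their intersections and that the connecting open immersions descend — both transparent since everything in sight is affine and $\Gamma$-stable.
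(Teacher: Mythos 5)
Your proof is correct and follows essentially the same route as the paper's: descend the coordinate ring in the affine case via Galois descent of vector spaces (Propositions \ref{dt10}/\ref{dt10d}), then patch a finite cover by stable affines, using \ref{dt01} for the intersections. You additionally spell out some points the paper leaves implicit (continuity of the induced action on $\Omega[V]$ in the Galois setting, and the finite generation of $A^{\Gamma}$ over $k$ via faithful flatness), but the skeleton of the argument is identical.
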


\begin{proof}
Assume first that $V$ is affine, and let $A=k[V]$. A descent datum
$(\varphi_{\sigma})_{\sigma\in\Gamma}$ defines a continuous action of $\Gamma$
on $A$ (see \ref{dt12}). From \ref{dt10}, we know that the map
\begin{equation}
c\otimes a\mapsto ca\colon\Omega\otimes_{k}A^{\Gamma}\rightarrow A \label{e27}%
\end{equation}
is an isomorphism. Let $V_{0}=\Spec A^{\Gamma}$, and let $\varphi$ be the
isomorphism $V\rightarrow V_{0\Omega}$ defined by (\ref{e27}). Then
$(V_{0},\varphi)$ splits the descent datum.

Next note that if $^{\sigma}U=U$ for all $\sigma\in\Gamma$, then a descent
datum on $V$ restricts to a descent datum on $U$ (see the diagram (\ref{e02})).

In the general case, we can write $V$ as a finite union of open affines
$U_{i}$ such that ${}^{\sigma}U_{i}=U_{i}$ for all $\sigma\in\Gamma$. Then $V$
is the algebraic scheme over $\Omega$ obtained by patching the $U_{i}$ by
means of the maps
\begin{equation}
U_{i}\hookleftarrow U_{i}\cap U_{j}\hookrightarrow U_{j}. \label{e24}%
\end{equation}
Each intersection $U_{i}\cap U_{j}$ is again affine (\ref{dt01}), and so the
system (\ref{e24}) descends to $k$. The algebraic scheme over $k$ obtained by
patching the descended system is a model of $V$ over $k$ splitting the descent datum.
\end{proof}

\begin{corollary}
\label{dt15}Let $\Omega^{\mathrm{sep}}$ be a separable closure of $\Omega$. If
every finite set of points of $V(\Omega^{\mathrm{sep}})$ is contained in an
open affine algebraic subscheme of $V_{\Omega^{\mathrm{sep}}}$, then every
descent datum on $V$ is effective.
\end{corollary}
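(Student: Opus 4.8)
The plan is to reduce to the hypothesis of Theorem~\ref{dt14}, i.e. to exhibit an open affine cover of $V$ by subschemes $U$ with ${}^{\sigma}U=U$ for all $\sigma\in\Gamma$. First I would pass to a separable closure: as explained in the paragraph preceding Proposition~\ref{dt10a}, the datum $(\varphi_{\sigma})_{\sigma\in\Aut(\Omega/k)}$ extends to an $\Omega^{\mathrm{sep}}/k$-descent datum $(\varphi_{\sigma}^{\prime})_{\sigma}$ on $V_{\Omega^{\mathrm{sep}}}$, and a model over $k$ splits one precisely when it splits the other. Hence it suffices to treat the extended datum, and we may assume $\Omega$ is separably closed, so that $\Omega$ is a separable closure of $k$, $\Gamma=\Gal(\Omega/k)$, and $k=\Omega^{\Gamma}$ by Proposition~\ref{dt0}(a). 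The hypothesis then reads: every finite set of points of $V(\Omega)$ lies in an open affine of $V$.

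Next I would show that the twisted action ${}^{\sigma}x=\varphi_{\sigma}(\sigma x)$ of $\Gamma$ on the closed points of $V$ has finite orbits; by Lemma~\ref{dt11}(a) this is a genuine left action. Since the datum is continuous, it is split by a model $(V_{0},\varphi)$ over a subfield $K$ finitely generated, hence (as $\Omega/k$ is algebraic) finite and separable, over $k$; thus $\Delta_{K}=\Aut(\Omega/K)$ is an open subgroup of finite index in $\Gamma$. For a closed point $x$, the image $\varphi(x)$ lies in the fibre of $V_{0\Omega}=V_{0}\times_{K}\Omega\to V_{0}$ over a closed point $y$ with $[k(y):K]<\infty$; this fibre $\Spec(k(y)\otimes_{K}\Omega)$ is a finite set, and every $\sigma\in\Delta_{K}$ (for which $\varphi_{\sigma}=\varphi^{-1}\circ\sigma\varphi$) preserves it, so the $\Delta_{K}$-orbit of $x$ is finite. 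As $\Delta_{K}$ has finite index, the full $\Gamma$-orbit is finite.

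I would then manufacture the $\Gamma$-stable open affines. Fix a closed point $x$, let $O_{x}=\{{}^{\sigma}x\}$ be its finite orbit, and use the hypothesis to choose an open affine $U\subseteq V$ containing $O_{x}$. Because $U$ involves only finitely many elements of $\Omega$, we have ${}^{\sigma}U=U$ for $\sigma$ in an open subgroup, so only finitely many translates ${}^{\sigma}U$ are distinct; therefore $U^{\ast}:=\bigcap_{\sigma\in\Gamma}{}^{\sigma}U$ is a finite intersection of open affines, hence open affine by~(\ref{dt01}), and it is manifestly $\Gamma$-stable. Moreover $x\in U^{\ast}$, since for each $\sigma$ one has ${}^{\sigma^{-1}}x\in O_{x}\subseteq U$, whence $x\in{}^{\sigma}U$. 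Thus every closed point of $V$ has a $\Gamma$-stable open affine neighbourhood; these neighbourhoods form an open cover of $V$ (an open subset of a finite-type scheme containing all its closed points is the whole scheme), and Theorem~\ref{dt14} then yields a $k$-model splitting the datum.

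The step I expect to be the main obstacle is guaranteeing that the $U^{\ast}$ cover \emph{all} of $V$, which rests on placing the orbit of every closed point inside an open affine. The stated hypothesis concerns the rational points $V(\Omega^{\mathrm{sep}})$, and a finite set of such points is a finite set of \emph{arbitrary} closed points exactly when $\Omega^{\mathrm{sep}}$ is algebraically closed, e.g. when $k$ is perfect; this is the setting in which the descent results~\ref{dt4}--\ref{dt6} were established, and there the argument above applies verbatim since every closed point is rational. For non-perfect $k$ one must either read the hypothesis as concerning all closed points (as in~\ref{dt02} for quasi-projective $V$), or instead set $W=\bigcup_{P\in V(\Omega)}U_{P}^{\ast}$ and argue that its $\Gamma$-stable reduced closed complement, having no $\Omega$-point, descends to $k$ by Proposition~\ref{dt4} and is hence empty.
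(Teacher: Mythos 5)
Your argument is correct and follows the paper's own proof essentially step for step: reduce to the case that $\Omega$ is separably closed, note that the twisted orbits $\{{}^{\sigma}P\}$ are finite because the datum is split by a model over a finite extension, put an orbit inside an open affine $U$, replace $U$ by the finite intersection $\bigcap_{\sigma\in\Gamma}{}^{\sigma}U$ (open affine by \ref{dt01}, stable by \ref{dt11}), and apply Theorem \ref{dt14}. The closed-point versus $\Omega^{\mathrm{sep}}$-rational-point issue you flag at the end is equally present in the paper's proof and is harmless in the intended applications (\ref{dt16}), where the hypothesis holds for arbitrary finite sets of closed points; only your second proposed repair is shaky, since over a non-perfect $k$ a nonempty reduced closed subscheme of $V_{\Omega^{\mathrm{sep}}}$ can have no $\Omega^{\mathrm{sep}}$-points.
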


\begin{proof}
As we noted in the paragraph before \ref{dt10a}, an $\Omega/k$-descent datum
for $V$ extends in a natural way to an $\Omega^{\mathrm{sep}}/k$-descent datum
for $V_{\Omega^{\mathrm{sep}}}$, and if a model $(V_{0},\varphi)$ over $k$
splits the second descent datum, then it also splits the first. Thus, we may
suppose that $\Omega$ is separably closed.

Let $(\varphi_{\sigma})_{\sigma\in\Gamma}$ be a descent datum on $V$, and let
$U$ be an open subscheme of $V$. By definition, $(\varphi_{\sigma})$ is split
by a model $(V_{1},\varphi)$ of $V$ over some finite extension $k_{1}$ of $k$.
After possibly replacing $k_{1}$ with a larger finite extension, which we may
suppose to be Galois over $k$, we have that there exists an open subscheme
$U_{1}$ of $V_{1}$ such that $\varphi(U)=U_{1\Omega}$. Now \ref{dt11}b shows
that $^{\sigma}U$ depends only on the coset $\sigma\Delta$, where
$\Delta=\Gal(\Omega/k_{1})$. In particular, $\{^{\sigma}U\mid\sigma\in
\Gamma\}$ is finite, and so the scheme $U^{\prime}\overset{\df}{=}%
\bigcap_{\sigma\in\Gamma}{}^{\sigma}U$ is open in $V$. Note that (see
\ref{dt11}a)
\[
^{\tau}U^{\prime}={}^{\tau}(\bigcap_{\sigma\in\Gamma}{}^{\sigma}%
U)=(\bigcap_{\sigma\in\Gamma}{}^{\tau\sigma}U)=U^{\prime}%
\]
for all $\tau\in\Gamma$.

Let $P$ be a closed point of $V$. Because $\{{}^{\sigma}P\mid\sigma\in
\Gamma\}$ is finite, it is contained in an open affine $U$ of $V$. Now
$U^{\prime}\overset{\df}{=}\bigcap_{\sigma\in\Gamma}{}^{\sigma}U$ is an open
affine in $V$ containing $P$ and such that ${}^{\sigma}U^{\prime}=U^{\prime}$
for all $\sigma\in\Gamma$. It follows that the scheme $V$ satisfies the
hypothesis of Theorem \ref{dt14}.
\end{proof}

\begin{corollary}
\label{dt16}Descent is effective in each of the following two cases:

\begin{enumerate}
\item $V$ is quasi-projective, or

\item an affine algebraic group $G$ acts transitively on $V$.
\end{enumerate}
\end{corollary}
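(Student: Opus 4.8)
The plan is to derive both cases from Corollary \ref{dt15}: it suffices to check, in each case, that every finite set of points of $V(\Omega^{\mathrm{sep}})$ is contained in an open affine subscheme of $V_{\Omega^{\mathrm{sep}}}$. Two preliminary observations make this hypothesis convenient to verify: the field $\Omega^{\mathrm{sep}}$ is separably closed, hence infinite, and a rational point $P\in V(\Omega^{\mathrm{sep}})$ is always a closed point of $V_{\Omega^{\mathrm{sep}}}$. So in both cases I am reduced to producing, for a given finite set of closed points, a single open affine containing it.

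For (a), I would simply invoke \ref{dt02}. Quasi-projectivity is preserved under extension of the base field, so $V_{\Omega^{\mathrm{sep}}}$ is quasi-projective over the infinite field $\Omega^{\mathrm{sep}}$, and \ref{dt02} asserts precisely that every finite set of closed points of such a scheme lies in an open affine subset. This gives the hypothesis of \ref{dt15}, and hence effectiveness, at once.

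For (b), the idea is to exploit the transitive action by translating a fixed open affine. Working over $\Omega^{\mathrm{sep}}$, fix a nonempty open affine $U_{0}\subset V_{\Omega^{\mathrm{sep}}}$ and a finite set $S\subset V(\Omega^{\mathrm{sep}})$; since each $g\in G(\Omega^{\mathrm{sep}})$ acts as an automorphism of $V_{\Omega^{\mathrm{sep}}}$, the translate $gU_{0}$ is again open affine, so it is enough to find $g$ with $S\subset gU_{0}$, equivalently $g^{-1}P\in U_{0}$ for every $P\in S$. For each $P$ the map $g\mapsto g^{-1}P$ is a morphism $G\rightarrow V_{\Omega^{\mathrm{sep}}}$ whose image is the full orbit of $P$, namely all of $V_{\Omega^{\mathrm{sep}}}$ by transitivity, so the preimage $O_{P}$ of $U_{0}$ is a nonempty open subset of $G$. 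Once $\bigcap_{P\in S}O_{P}$ is shown to be a nonempty open subset of $G$, the density of rational points \ref{dt06}, applied to $G$, supplies the required $g$.

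The step that needs care, and which I expect to be the main obstacle, is that a finite intersection of nonempty opens in $G$ need not be nonempty when $G$ is disconnected, so I must first reduce to $G$ connected. I would do this by decomposing $V_{\Omega^{\mathrm{sep}}}$ into the orbits of the identity component $G^{\circ}$: because the action is transitive these orbits all have dimension equal to $\dim V$, hence are open, and being disjoint and covering $V$ they are exactly its connected components, each a homogeneous space under $G^{\circ}$. A finite set $S$ then splits as a disjoint union $S=\bigsqcup_{i}S_{i}$ indexed by these components, and a disjoint union of open affines is again open affine; so it suffices to treat one component, where $G^{\circ}$ acts transitively. There $G^{\circ}$ is irreducible, each $O_{P}$ is a dense open, the finite intersection is nonempty open, and the translation argument above applies. (If $G$ fails to be smooth, the dimension-counting is run on the associated reduced group, leaving the combinatorial reduction unchanged.)
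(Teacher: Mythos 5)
Your proof is correct and follows essentially the same route as the paper's: part (a) is exactly the paper's appeal to \ref{dt02} via \ref{dt15}, and part (b) is the same translation-of-an-open-affine argument, producing a suitable $g\in G(\Omega^{\mathrm{sep}})$ by the density statement \ref{dt06}. The only real difference is your reduction to connected $G$ via the $G^{\circ}$-orbit decomposition of $V$; the paper simply intersects the nonempty opens $G_{P}$ and tacitly assumes the intersection is nonempty, so your extra step closes a small gap in the disconnected case (alternatively one could choose the initial open affine to meet every component of $V$) rather than changing the method.
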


\begin{proof}
(a) Apply \ref{dt02}.

(b) We may assume $\Omega$ to be separably closed. Let $S$ be a finite set of
points of $V(\Omega)$, and let $U$ be an open affine in $V$. For each $P\in
S$, there is a nonempty open algebraic subscheme $G_{P}$ of $G$ such that
$G_{P}\cdot P\subset U$. Because $\Omega$ is separably closed, there exists a
$g\in(\textstyle\bigcap\nolimits_{P\in S}G_{P}\cdot P)(\Omega)$ (by
\ref{dt06}; the separable points are dense in an algebraic scheme). Now
$g^{-1}U$ is an open affine containing $S$.
\end{proof}

\section{Application: Weil restriction}

Let $K/k$ be a finite extension of fields, and let $V$ be an algebraic scheme
over $K$. A pair $(V_{\ast},\varphi)$ consisting of an algebraic scheme
$V_{\ast}$ over $k$ and a regular map $\varphi\colon V_{\ast K}\rightarrow V$
is called the $K/k$-\emph{Weil restriction }of $V$ if it has the following
universal property: for any algebraic scheme $T$ over $k$ and regular map
$\varphi^{\prime}\colon T_{K}\rightarrow V$, there exists a unique regular map
$\psi\colon T\rightarrow V$ (of $k$-scheme) such that $\varphi\circ\psi
_{K}=\varphi^{\prime}$, i.e.,
\[
\text{given} \begin{tikzcd}
T_K\arrow{rd}{\varphi'}\\
V_{\ast K}\arrow{r}{\varphi}&V
\end{tikzcd}
\text{there exists a unique} \begin{tikzcd}
T\arrow{d}{\psi} \\
V_{\ast}
\end{tikzcd}
\text{such that} \begin{tikzcd}
T_K\arrow{d}{\psi_K}\arrow{rd}{\varphi'} \\
V_{\ast K}\arrow{r}{\varphi}&V
\end{tikzcd}
\text{commutes.}
\]

\noindent In other words, $(V_{\ast},\varphi)$ is the $K/k$-Weil restriction
of $V$ if $\varphi$ defines an isomorphism%

\[
\psi\mapsto\varphi\circ\psi_{K}\colon\Mor_{k}(T,V_{\ast})\rightarrow
\Mor_{K}(T_{K},V)
\]
(natural in the $k$-algebraic scheme $T$); in particular,
\[
V_{\ast}(A)\simeq V(K\otimes_{k}A)
\]
(natural in the affine $k$-algebra $A$). If it exists, the $K/k$-Weil
restriction of $V$ is uniquely determined by its universal property (up to a
unique isomorphism).

When $(V_{\ast},\varphi)$ is the $K/k$-Weil restriction of $V$, the algebraic
scheme $V_{\ast}$ is said to have been\emph{ }obtained from $V$\emph{ by
(Weil) restriction of scalars }or \emph{by restriction of the base field}$.$

\begin{proposition}
\label{dt16a}If $V$ is a quasi-projective variety and $K/k$ is separable, then
the $K/k$-Weil restriction of $V$ exists.
\end{proposition}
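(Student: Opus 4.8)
The plan is to construct the Weil restriction by Galois descent from a finite Galois closure of $K/k$. Let $L$ be a finite Galois extension of $k$ containing $K$ (it exists because $K/k$ is separable), and set $\Gamma=\Gal(L/k)$ and $H=\Gal(L/K)$, so that $[\Gamma:H]=[K:k]=n$. Choose left coset representatives $1=\sigma_{1},\sigma_{2},\ldots,\sigma_{n}$ for $H$ in $\Gamma$; their restrictions $\sigma_{i}|_{K}$ are exactly the $n$ distinct $k$-embeddings $K\hookrightarrow L$. In the notation of \ref{dt44} each $\sigma_{i}|_{K}$ turns $V$ into a variety $\sigma_{i}V$ over $L$, and I form the fibre product $W=\sigma_{1}V\times_{\Spec L}\cdots\times_{\Spec L}\sigma_{n}V$. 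Since $V$ is quasi-projective over $K$, each $\sigma_{i}V$ is quasi-projective over $L$, so $W$ is quasi-projective over $L$; and $W$ is geometrically reduced (reduce to the algebraically closed case and apply \ref{dt05}), hence a variety.

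Next I put a descent datum on $W$. For $\tau\in\Gamma$, left multiplication by $\tau$ permutes the cosets $\sigma_{i}H$, giving a permutation $\pi_{\tau}$ of $\{1,\ldots,n\}$ defined by $\tau\sigma_{i}\in\sigma_{\pi_{\tau}(i)}H$. As $\tau\sigma_{i}$ and $\sigma_{\pi_{\tau}(i)}$ agree on $K$, \ref{dt44} identifies $\tau(\sigma_{i}V)=\sigma_{\pi_{\tau}(i)}V$ canonically, so the permutation of factors defines $\varphi_{\tau}\colon\tau W\to W$. The cocycle identity $\varphi_{\sigma}\circ(\sigma\varphi_{\tau})=\varphi_{\sigma\tau}$ follows from $\pi_{\sigma\tau}=\pi_{\sigma}\pi_{\tau}$ together with the coherence of the identifications of \ref{dt44}. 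Because $\Gamma$ is finite this is automatically a descent datum, and because $W$ is quasi-projective it is effective by \ref{dt16}(a): there is a variety $V_{\ast}$ over $k$ and an isomorphism $V_{\ast L}\simeq W$ carrying the canonical datum of $V_{\ast L}$ to $(\varphi_{\tau})$. Finally, restricting the datum to $H=\Gal(L/K)$ preserves the first factor $\sigma_{1}V=V_{L}$ and acts on it by the canonical $L/K$-datum, so the projection $\pr_{1}\colon W\to V_{L}$ is $H$-equivariant and descends, by \ref{dt10c} over the base field $K$, to a $K$-morphism $\varphi\colon V_{\ast K}\to V$.

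It then remains to verify the universal property, namely that $\psi\mapsto\varphi\circ\psi_{K}$ is a bijection $\Mor_{k}(T,V_{\ast})\to\Mor_{K}(T_{K},V)$, natural in the algebraic scheme $T$ over $k$. Applying \ref{dt10c} to the model $V_{\ast}$ of $W$ identifies $\Mor_{k}(T,V_{\ast})$ with the set of $L$-morphisms $\alpha\colon T_{L}\to W$ with $\varphi_{\tau}\circ\tau\alpha=\alpha$. Writing $\alpha=(\alpha_{i})_{i}$ with $\alpha_{i}\colon T_{L}\to\sigma_{i}V$, the compatibility condition reads $\alpha_{\pi_{\tau}(i)}=\tau(\alpha_{i})$; since $\Gamma$ acts transitively on $\Gamma/H$ this shows $\alpha$ is determined by $\alpha_{1}$, and the only surviving constraint (taking $\tau\in H$) is that $\alpha_{1}\colon T_{L}\to V_{L}$ be $H$-equivariant. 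By descent along $L/K$ (again \ref{dt10c}) such $\alpha_{1}$ correspond bijectively to $K$-morphisms $T_{K}\to V$, and since $\alpha_{1}=\pr_{1}\circ\psi_{L}$ the composite bijection is exactly $\psi\mapsto\varphi\circ\psi_{K}$.

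The main obstacle is the bookkeeping in the last paragraph: one must check precisely that descent-compatibility of a morphism into the product $W$ forces the conjugate components $\alpha_{i}$ to be redundant, so that everything is governed by the single $H$-equivariant component $\alpha_{1}$, and that the resulting bijection is genuinely induced by $\pr_{1}$, i.e. is $\psi\mapsto\varphi\circ\psi_{K}$ rather than some conjugate of it. By contrast, the cocycle condition for $(\varphi_{\tau})$ is routine once the canonical identifications of \ref{dt44} are in place, and passing to a \emph{finite} Galois $L$ is what makes every descent datum in sight automatically continuous, so that \ref{dt16} and \ref{dt10c} apply directly.
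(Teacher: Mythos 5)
Your construction is essentially the paper's own proof: form the product $\prod_{\tau}\tau V$ over a Galois closure, equip it with the permutation descent datum, descend using the effectivity result for quasi-projective schemes (\ref{dt16}), and obtain $\varphi$ by descending the first projection. The only difference is that you spell out the verification of the universal property (and the cocycle condition), which the paper leaves as ``easy to check''; your bookkeeping there is correct.
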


\begin{proof}
Let $\Omega$ be a Galois extension of $k$ large enough to contain all
conjugates of $K$, i.e., such that $\Omega\otimes_{k}K\simeq\prod_{\tau\colon
K\rightarrow\Omega}\tau K$. Let $V^{\prime}=\prod\tau V$ --- this is an
algebraic scheme over $\Omega$. For $\sigma\in\Gal(\Omega/k)$, define
$\varphi_{\sigma}\colon\sigma V^{\prime}\rightarrow V^{\prime}$ to be the
regular map that acts on the factor $\sigma(\tau V)$ as the canonical
isomorphism $\sigma(\tau V)\simeq(\sigma\tau)V$. Then $(\varphi_{\sigma
})_{\sigma\in\Gal(\Omega/k)}$ is a descent datum, and so defines a model
$(V_{\ast},\varphi_{\ast})$ of $V^{\prime}$ over $k$.

Choose a $\tau_{0}\colon K\rightarrow\Omega$. The projection map $V^{\prime
}\rightarrow\tau_{0}V$ is invariant under the action of $\Gal(\Omega/\tau
_{0}K)$, and so defines a regular map $(V_{\ast})_{\tau_{0}K}\rightarrow
\tau_{0}V$ (\ref{dt5}), and hence a regular map $\varphi\colon V_{\ast
K}\rightarrow V$. It is easy to check that this has the correct universal property.
\end{proof}

\section{Specialization}

Let $U$ be an integral algebraic scheme over $k$, and let $\varphi\colon
V\rightarrow U$ be a dominant map. The%
\index{fibre!generic}
\emph{generic fibre }of $\varphi$ is a regular map $\varphi_{K}\colon
V_{K}\rightarrow\Spec K$, where $K=k(U)$. For example, if $V$ and $U$ are
affine, then $\varphi$ is $\Spec$ of an injective homomorphism of rings
$f\colon A\rightarrow B$, and $\varphi_{K}$ is $\Spec$ of $K\simeq
A\otimes_{k}K\rightarrow B\otimes_{k}K$, where $K$ is the field of fractions
of the integral domain $A$.

Let $K$ be a field finitely generated over $k$, and let $V_{K}$ be an
algebraic scheme over $K$. For any integral algebraic scheme $U$ over $k$ with
$k(U)=K$, there exists a dominant map $\varphi\colon V\rightarrow U$ with
generic fibre $V_{K}\rightarrow\Spec K$. For example, if $U=\Spec(A)$, where
$A$ is a finitely generated $k$-subalgebra of $K$, we only have to invert the
coefficients of some set of polynomials defining $V_{K}$. Let $P$ be a closed
point in the image of $\varphi$. Then the fibre of $V$ over $P$ is an
algebraic scheme $V(P)$ over $k(P)$, called the%
\index{specialization}
\emph{specialization }of $V$ at $P$. If $k$ is algebraically closed, then
$k(P)=k$.

\section{Rigid descent}

\begin{proposition}
\label{dt18}Let $V$ and $W$ be algebraic schemes over an algebraically closed
field $k$. If $V$ and $W$ become isomorphic over some field containing $k$,
then they are already isomorphic over $k$.
\end{proposition}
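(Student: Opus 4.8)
The plan is to reduce the statement over the algebraically closed field $k$ to a descent problem over a finitely generated subfield, and then to apply the rigidity available from Proposition~\ref{dt13}. Suppose $V$ and $W$ become isomorphic over some extension field $L\supset k$; I may enlarge $L$ and assume it is large. Let $\alpha\colon V_L\rightarrow W_L$ be the given isomorphism. Since an isomorphism is determined by finitely many polynomial data (the coefficients of the polynomials defining it on a finite affine cover), there is a subfield $K$ of $L$, \emph{finitely generated} over $k$, such that $\alpha$ is defined over $K$, i.e.\ $\alpha=\beta_L$ for some isomorphism $\beta\colon V_K\rightarrow W_K$. Thus it suffices to prove the proposition when $L=K$ is finitely generated over $k$.

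With $K$ finitely generated over the algebraically closed field $k$, I would use the specialization machinery of the preceding section. Choose an integral algebraic scheme $U$ over $k$ with function field $k(U)=K$, and spread the isomorphism $\beta\colon V_K\rightarrow W_K$ out to a morphism over a dense open subset of $U$: concretely, there are dominant maps $\mathcal V\rightarrow U$, $\mathcal W\rightarrow U$ whose generic fibres are $V_K,W_K$ and a $U$-isomorphism $\mathcal V\rightarrow\mathcal W$ over some dense open $U'\subset U$, with the generic fibre recovering $\beta$. Because $k$ is algebraically closed, $U'$ has a closed point $P$ with residue field $k(P)=k$. Specializing the family isomorphism to the fibre over $P$ then yields an isomorphism $V(P)\rightarrow W(P)$ of algebraic schemes over $k$. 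The remaining point is to identify these special fibres $V(P),W(P)$ with $V,W$ themselves: since $V$ and $W$ are already defined over $k$ and the families $\mathcal V,\mathcal W$ are obtained by spreading out the \emph{base-changed} schemes $V_K,W_K$, the fibre over the $k$-rational point $P$ is canonically $V$ (resp.\ $W$). Hence $V\simeq W$ over $k$.

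The main obstacle I expect is the specialization step: ensuring that the generic isomorphism genuinely spreads out to an isomorphism (not merely a morphism) over a dense open of $U$, and that after specializing at a well-chosen closed point $P$ one still has an isomorphism rather than just a map. The clean way to handle this is to spread out $\beta$ and its inverse $\beta^{-1}$ simultaneously, together with the two relations $\beta\circ\beta^{-1}=\id$ and $\beta^{-1}\circ\beta=\id$; each of these holds generically and hence on a dense open, so over the intersection $U'$ (still dense open, and nonempty since $U$ is integral) the spread-out maps are mutually inverse. Invoking Chevalley's theorem (\ref{dt04}) guarantees the image of the structure maps contains a dense open, so such a $k$-rational $P\in U'$ exists. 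Specializing the pair of mutually inverse maps at $P$ preserves the relations defining an isomorphism, giving the desired $k$-isomorphism $V\simeq W$.
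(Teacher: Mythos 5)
Your proof is correct and follows essentially the same route as the paper's: reduce to a finitely generated subfield $K$, spread the isomorphism out over an integral scheme $U$ with $k(U)=K$, and specialize at a closed point (automatically $k$-rational since $k$ is algebraically closed). You supply more detail than the paper on why the spread-out map is an isomorphism over a dense open (spreading out $\beta$, $\beta^{-1}$, and the composition relations simultaneously), which is a worthwhile elaboration rather than a departure; the opening reference to Proposition \ref{dt13} is never actually used and could be dropped.
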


\begin{proof}
The hypothesis implies that, for some field $K$ finitely generated over $k$,
there exists an isomorphism $\varphi\colon V_{K}\rightarrow W_{K}$. Let $U$ be
an integral algebraic scheme over $k$ such that $k(U)=K$. After possibly
replacing $U$ with an open subscheme, we may extend $\varphi$ to an
isomorphism $\varphi_{U}\colon U\times V\rightarrow U\times W$. The fibre of
$\varphi_{U}$ at any closed point of $U$ is an isomorphism $V\rightarrow W$.
\end{proof}

\begin{example}
\label{dt16e}Let $\Omega\supset k$ be algebraically closed fields, and let $E$
be an elliptic curve over $\Omega$. There exists a model of $E$ over a
subfield $K$ of $\Omega$ if and only if $j(E)\in K$. Therefore, if there exist
models of $E$ over subfields $K_{1},K_{2}$ of $\Omega$ such that $K_{1}\cap
K_{2}=k$, then $E$ has a model over $k$. We now prove a similar statement for
an arbitrary algebraic scheme over $\Omega$.
\end{example}

Let $\Omega\supset k$ be fields. Subfields $K_{1}$ and $K_{2}$ of $\Omega$
containing $k$ are said to be \emph{linearly disjoint} over $k$ if the
homomorphism%
\[
\tstyle\sum a_{i}\otimes b_{i}\mapsto\sum a_{i}b_{i}\colon K_{1}\otimes
_{k}K_{2}\rightarrow K_{1}\cdot K_{2}\subset\Omega
\]
is injective.

\begin{proposition}
\label{dt16n}Let $\Omega\supset k$ be algebraically closed fields, and let $V$
be an algebraic scheme over $\Omega$. If there exist models of $V$ over
subfields $K_{1},K_{2}$ of $\Omega$ finitely generated over $k$ and linearly
disjoint over $k$, then there exists a model of $V$ over $k$.
\end{proposition}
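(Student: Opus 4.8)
The plan is to realize the model over $k$ as a suitable \emph{specialization} of the second model $V_2$, using $V_1$ to pin down which specialization to take and the rigidity result \ref{dt18} to transport the resulting isomorphism back up to $\Omega$.

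First I would spread out both models: choose finitely generated $k$-subalgebras $A_i\subseteq K_i$ with field of fractions $K_i$ and extend $V_i$ to a morphism $\mathcal{V}_i\to S_i:=\Spec A_i$ with generic fibre $V_i$ (inverting finitely many elements as in the Specialization section). Since $V_{1,\Omega}\simeq V\simeq V_{2,\Omega}$, the two models become isomorphic over $\Omega$. The linear disjointness hypothesis is exactly that $K_1\otimes_k K_2\to\Omega$ is injective, so a single finitely generated $k$-subalgebra $R\subseteq\Omega$ contains both $A_1$ and $A_2$; after enlarging $R$ by the finitely many coordinates needed to define the isomorphism and inverting one element, I would arrange an isomorphism $\Psi\colon \mathcal{V}_1\times_{S_1}S'\simeq\mathcal{V}_2\times_{S_2}S'$ of the two pulled-back families over $S':=\Spec R$. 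Here $S'$ is integral and carries dominant maps $\pr_1\colon S'\to S_1$ and $\pr_2\colon S'\to S_2$ (coming from $A_1,A_2\hookrightarrow R$).

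The key device is to pass to the generic fibre $T:=S'\times_{S_1}\Spec K_1$ of $\pr_1$. Over $T$ the family $\mathcal{V}_1\times_{S_1}S'$ becomes the \emph{constant} family $V_1\times_{K_1}T$, so $\Psi$ exhibits the restriction of $\mathcal{V}_2\times_{S_2}S'$ to $T$ as $V_1\times_{K_1}T$ as well. I would then produce a $k$-rational point $P_2$ of $S_2$ in the image of $T\to S_2$. Because $k$ is algebraically closed, \ref{dt05} makes $S_2\times_k\Spec K_1$ integral, and linear disjointness makes $T\to S_2\times_k\Spec K_1$ dominant, so Chevalley's theorem \ref{dt04} (over $K_1$) puts a dense open of $S_2\times_k\Spec K_1$ in its image. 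A dimension count shows the image of the complementary closed set is not dense in $S_2$, so by the density of $k$-points \ref{dt06} there is a $k$-point $P_2$ of $S_2$ whose $K_1$-point above it in $S_2\times_k\Spec K_1$ lies in that dense open; pulling back gives $t_0\in T$ with $\pr_1(t_0)=\eta_1$ (the generic point of $S_1$) and $\pr_2(t_0)=P_2$.

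Finally I would set $W:=\mathcal{V}_2(P_2)$, the specialization of $V_2$ at $P_2$; as $k(P_2)=k$ this is a $k$-scheme, the candidate model. Taking fibres of $\Psi$ over $T$ at $t_0$ gives $W_{k(t_0)}\simeq V_{1,k(t_0)}$, where $k(t_0)\supseteq K_1$ since $t_0$ lies over $\eta_1$. Thus the two $K_1$-schemes $W_{K_1}$ and $V_1$ become isomorphic over the extension $k(t_0)$ of $K_1$, so applying \ref{dt18} over $\overline{K_1}$ yields $W_{\overline{K_1}}\simeq V_{1,\overline{K_1}}$, and base-changing along an embedding $\overline{K_1}\hookrightarrow\Omega$ over $K_1$ gives $W_\Omega\simeq V_{1,\Omega}=V$. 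I expect the main obstacle to be precisely the middle step: both hypotheses are indispensable there, since linear disjointness is what forces $T\to S_2\times_k\Spec K_1$ to be dominant (so a $k$-point lands in the image) and algebraic closedness of $k$ is what keeps the products integral (\ref{dt05}) and the $k$-points plentiful (\ref{dt06}); the delicate bookkeeping will be checking that the generic fibre of $\pr_1$ genuinely makes the first family constant and that the dimension count produces a usable $P_2$. The closing transport is then a clean application of rigidity.
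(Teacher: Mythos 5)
This is essentially the paper's own argument with the roles of $K_{1}$ and $K_{2}$ interchanged: both proofs spread the two models out over integral affine $k$-schemes, use linear disjointness to obtain an integral scheme dominating the product over which the two families become isomorphic, locate a closed $k$-point of one factor over which the isomorphism survives (Chevalley \ref{dt04} plus density of $k$-points), and identify the corresponding fibre as the desired model, transporting the comparison back to $\Omega$ by rigidity. The one repair needed is in your point-finding step: the image in $S_{2}$ of the closed complement $Z\subsetneq S_{2,K_{1}}$ of the dense open \emph{can} be dense (e.g., a horizontal divisor), so rather than a dimension count on that image you should argue directly that a nonzero $f=\sum a_{i}\otimes c_{i}$ in the ideal of $Z$, with the $c_{i}\in K_{1}$ linearly independent over $k$, cannot vanish at the $K_{1}$-point lying over any $k$-point $P_{2}$ with some $a_{i}(P_{2})\neq0$, and such $P_{2}$ exist by \ref{dt06}.
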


\begin{proof}
The model of $V$ over $K_{1}$ extends to a model over an integral affine
algebraic scheme $U_{1}$ with $k(U_{1})=K_{1}$, i.e., there exists a
surjective map $V_{1}\rightarrow U_{1}$ of $k$-schemes whose generic fibre is
the model of $V$ over $K_{1}$. A similar statement applies to the model over
$K_{2}$. Because $K_{1}$ and $K_{2}$ are linearly disjoint, $K_{1}\otimes
_{k}K_{2}$ is an integral domain with field of fractions $k(U_{1}\times
U_{2})$. From the map $V_{1}\rightarrow U_{1}$, we get a map $V_{1}\times
U_{2}\rightarrow U_{1}\times U_{2}$, and similarly for $V_{2}$.

Assume initially that $V_{1}\times U_{2}$ and $U_{1}\times V_{2}$ are
isomorphic over $U_{1}\times U_{2}$, so that we have a commutative diagram,
\[
\begin{tikzcd}
V_1\arrow{d}
&V_1\times U_2\arrow{l}[swap]{\pr_1}\arrow{rd}\arrow{rr}{\approx}
&&U_1\times V_2\arrow{r}{\pr_2}\arrow{ld}
&V_2\arrow{d}\\
U_1&&U_1\times U_2\arrow{ll}\arrow{rr}&&U_2.
\end{tikzcd}
\]
Let $P$ be a closed point of $U_{1}$. When we pull back the central triangle
to the algebraic subscheme $P\times U_{2}$ of $U_{1}\times U_{2}$, we get the
diagram at left below. Note that $P\simeq\Spec k$ (because $k$ is
algebraically closed) and $P\times U_{2}\simeq U_{2}$.
\[
\begin{tikzcd}[column sep=small]
V_1(P)\times U_2\arrow{rd}\arrow{rr}{\approx}
&&P\times V_2\arrow{ld}
&V_1(P)_{K_2}\arrow{rr}{\approx}\arrow{rd}
&&V_{2K_2}\arrow{ld}\\
&P\times U_2&&&\Spec(K_2).
\end{tikzcd}
\]
The generic fibre of this diagram is the diagram at right. Here $V_{1}%
(P)_{K_{2}}$ is the algebraic scheme over $K_{2}$ obtained from $V_{1}(P)$ by
extension of scalars $k\rightarrow K_{2}$. As $V_{2K_{2}}$ is a model $V$ over
$K_{2}$, it follows that $V_{1}(P)$ is a model of $V$ over $k$.

We now prove the general case. The schemes $(V_{1}\times U_{2})_{k(U_{1}\times
U_{2})}$ and $(U_{1}\times V_{2})_{k(U_{1}\times U_{2})}$ become isomorphic
over some finite field extension $L$ of $k(U_{1}\times U_{2})$. Let $\bar{U}$
be the normalization\footnote{If $U_{1}\times U_{2}=\Spec C$, then $\bar
{U}=\Spec\bar{C}$, where $\bar{C}$ is the integral closure of $C$ in $L$.} of
$U_{1}\times U_{2}$ in $L$, and let $U$ be a dense open subset of $\bar{U}$
such that some isomorphism of $(V_{1}\times U_{2})_{L}$ with $(U_{1}\times
V_{2})_{L}$ extends to an isomorphism over $U$. Then \ref{dt03} shows that
$\bar{U}\rightarrow U_{1}\times U_{2}$ is surjective, and so the image
$U^{\prime}$ of $U$ in $U_{1}\times U_{2}$ contains a nonempty (hence dense)
open subset of $U_{1}\times U_{2}$ (see \ref{dt04}). In particular,
$U^{\prime}$ contains a subset $P\times U_{2}^{\prime}$ with $U_{2}^{\prime}$
a nonempty open subset of $U_{2}$. Now the previous argument gives us schemes
$V_{1}(P)_{K_{2}}$ and $V_{2K_{2}}$ over $K_{2}$ that become isomorphic over
$k(U^{\prime\prime})$, where $U^{\prime\prime}$ is the inverse image of
$P\times U_{2}^{\prime}$ in $\bar{U}$. As $k(U^{\prime\prime})$ is a finite
extension of $K_{2}$, this again shows that $V_{1}(P)$ is a model of $V$ over
$k$.
\end{proof}

\begin{proposition}
\label{dt17}Let $\Omega$ be algebraically closed of infinite transcendence
degree over $k$, and assume that $k$ is algebraically closed in $\Omega$. For
any $K\subset\Omega$ finitely generated over $k$, there exists a $\sigma
\in\Aut(\Omega/k)$ such that $K$ and $\sigma K$ are linearly disjoint over
$k.$
\end{proposition}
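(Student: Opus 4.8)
The plan is to choose a transcendence basis of $K$ over $k$, construct an automorphism $\sigma$ carrying it to a set of ``fresh'' transcendentals, and then upgrade the resulting algebraic disjointness of $K$ and $\sigma K$ to genuine linear disjointness. First note that the hypotheses already force $k$ to be algebraically closed: the algebraic closure of $k$ taken inside $\Omega$ consists of elements algebraic over $k$, hence lies in $k$, so $k=\bar k$. In particular $k$ is perfect, and the conclusions of \ref{dt05} hold for finitely generated $k$-algebras.

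To build $\sigma$, pick a finite transcendence basis $t_1,\dots,t_n$ of $K$ over $k$ and extend it to a transcendence basis $\mathcal B$ of $\Omega$ over $k$. Since the transcendence degree is infinite, $\mathcal B$ is infinite, so I can choose $n$ further distinct elements $t_1',\dots,t_n'\in\mathcal B\smallsetminus\{t_1,\dots,t_n\}$; the set $\{t_1,\dots,t_n,t_1',\dots,t_n'\}$ is then algebraically independent over $k$. Because $\mathcal B\smallsetminus\{t_1,\dots,t_n\}$ and $\mathcal B\smallsetminus\{t_1',\dots,t_n'\}$ have the same (infinite) cardinality, there is a bijection $f\colon\mathcal B\to\mathcal B$ with $f(t_i)=t_i'$. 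This $f$ extends uniquely to a $k$-automorphism of the rational function field $k(\mathcal B)$, and, since $\Omega$ is an algebraic closure of $k(\mathcal B)$, it extends further (by the axiom of choice) to some $\sigma\in\Aut(\Omega/k)$ with $\sigma t_i=t_i'$.

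Now $t_1',\dots,t_n'$ is a transcendence basis of $\sigma K$ over $k$, and $K\cdot\sigma K$ is algebraic over $k(t_1,\dots,t_n,t_1',\dots,t_n')$; hence its transcendence degree over $k$ equals $2n$, the sum of those of $K$ and $\sigma K$. Thus $K$ and $\sigma K$ are algebraically disjoint over $k$. To convert this into linear disjointness, write $K$ and $\sigma K$ as the fraction fields of finitely generated $k$-subalgebras $A$ and $A'$ (integral domains of dimension $n$). By \ref{dt05}, $A\otimes_k A'$ is an integral domain, necessarily of dimension $\dim A+\dim A'=2n$; the multiplication map sends it onto the finitely generated domain $A\cdot A'\subset\Omega$, whose fraction field $K\cdot\sigma K$ has transcendence degree $2n$ over $k$ by the preceding sentence. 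A surjection of finitely generated $k$-domains of equal dimension has trivial kernel, so $A\otimes_k A'\to\Omega$ is injective; localizing at the images of $A\smallsetminus\{0\}$ and $A'\smallsetminus\{0\}$ shows $K\otimes_k\sigma K\to\Omega$ is injective, i.e.\ $K$ and $\sigma K$ are linearly disjoint over $k$.

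The routine part is the construction of $\sigma$, which is where the infinite transcendence degree is used. I expect the main obstacle to be the final paragraph: passing from the cheap transcendence-degree count (algebraic disjointness) to the genuine linear disjointness that the statement demands. This is exactly where the algebraic closedness of $k$ is indispensable --- over a general base $A\otimes_k A'$ may fail to be reduced or a domain (cf.\ Remark~\ref{dt6a}), and then equality of dimensions would not by itself rule out a nonzero kernel.
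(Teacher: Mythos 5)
Your proof is correct, and its first half --- constructing $\sigma$ by permuting a transcendence basis of $\Omega/k$ so as to carry $t_1,\dots,t_n$ onto fresh algebraically independent elements, then extending to all of $\Omega$ --- is exactly the paper's construction. You diverge at the step you rightly identify as the crux: upgrading algebraic disjointness to linear disjointness. The paper works with the fields directly: $K_1\otimes_k\sigma K_1$ (tensor product of the purely transcendental subfields) injects into $\Omega$ for free; $K\otimes_k\sigma K$ is a domain by \ref{dt05} (the point where both you and the paper need $k$ algebraically closed) and is integral over $K_1\otimes_k\sigma K_1$; hence the kernel $\mathfrak{q}$ of $K\otimes_k\sigma K\to\Omega$ and the ideal $(0)$ are comparable primes lying over $(0)$, so $\mathfrak{q}=(0)$ by the incomparability statement in \ref{dt03}. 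You instead descend to finitely generated models $A,A'$ and run a dimension count: $A\otimes_k A'$ is a domain of dimension $2n$ surjecting onto $A\cdot A'$, which also has dimension $2n$ by the transcendence-degree computation, and a surjection of equidimensional finitely generated $k$-domains is injective. Both arguments are sound; the paper's uses only the lying-over/incomparability facts it records in \ref{dt03}, whereas yours imports standard dimension theory of finitely generated $k$-algebras ($\dim(A\otimes_k A')=\dim A+\dim A'$, and $\dim B/\mathfrak{p}<\dim B$ for $\mathfrak{p}\neq0$) that is not among the paper's stated preliminaries. Your opening remark that the hypotheses force $k$ to be algebraically closed outright is a point the paper leaves implicit when it invokes \ref{dt05}.
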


\begin{proof}
Let $a_{1},\ldots,a_{n}$ be a transcendence basis for $K/k$, and extend it to
a transcendence basis $a_{1},\ldots,a_{n},b_{1},\ldots,b_{n},\ldots$ of
$\Omega/k$. Let $\sigma$ be any permutation of the transcendence basis such
that $\sigma(a_{i})=b_{i}$ for all $i$. Then $\sigma$ defines a $k$%
-automorphism of $k(a_{1},\ldots a_{n},b_{1},\ldots,b_{n},\ldots)$, which we
extend to an automorphism of $\Omega$.

Let $K_{1}=k(a_{1},\ldots,a_{n})$. Then $\sigma K_{1}=k(b_{1},\ldots,b_{n})$,
and certainly $K_{1}$ and $\sigma K_{1}$ are linearly disjoint. Note that
$K_{1}\otimes_{k}\sigma K_{1}\subset$ $K\otimes_{k}\sigma K$ are integral
domains (by \ref{dt05}) and that $K\otimes_{k}\sigma K$ is integral over
$K_{1}\otimes_{k}\sigma K_{1}$. The kernel of $K\otimes_{k}\sigma K\rightarrow
K\cdot\sigma K$ is a prime ideal $\mathfrak{q}{}$ such that
\[
\mathfrak{q}{}\cap\left(  K_{1}\otimes_{k}\sigma K_{1}\right)  =0=\{0\}\cap
\left(  K_{1}\otimes_{k}\sigma K_{1}\right)  ,
\]
and so $\mathfrak{q}{}=0$ (by \ref{dt03}).
\end{proof}

\begin{proposition}
\label{dt20}Let $\Omega\supset k$ be algebraically closed fields, and let $V$
be an algebraic scheme over $\Omega$. If $V$ is isomorphic to $\sigma V$ for
every $\sigma\in\Aut(\Omega/k)$, then $V$ has a model over $k$.
\end{proposition}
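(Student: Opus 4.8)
The plan is to produce two models of $V$ over finitely generated subfields of $\Omega$ that are linearly disjoint over $k$, and then to finish with Proposition~\ref{dt16n}. Since $V$ is of finite type, it is defined over a subfield $K$ of $\Omega$ finitely generated over $k$; fix a model $V_{0}$ of $V$ over $K$. Because $k$ is algebraically closed it is algebraically closed in $\Omega$, so the hypotheses of Proposition~\ref{dt17} are met as soon as $\Omega$ has infinite transcendence degree over $k$: that proposition then yields a $\sigma\in\Gamma=\Aut(\Omega/k)$ for which $K$ and $\sigma K$ are linearly disjoint over $k$. The assumption $V\simeq\sigma V$ shows that $\sigma V_{0}$, a priori only a model over $\sigma K$ of $\sigma V$, is in fact a model of $V$, since $(\sigma V_{0})_{\Omega}\simeq\sigma\bigl((V_{0})_{\Omega}\bigr)\simeq\sigma V\simeq V$. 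Thus $V$ has models over the linearly disjoint finitely generated fields $K$ and $\sigma K$, and Proposition~\ref{dt16n} produces a model of $V$ over $k$. Note that the hypothesis is used exactly once, for this single $\sigma\in\Gamma$.

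It remains to discharge the assumption that $\Omega$ has infinite transcendence degree over $k$, and I would do this by enlarging the ground field. Choose an algebraically closed field $\Omega'\supset\Omega$ of infinite transcendence degree over $k$. A model of $V_{\Omega'}$ over $k$ gives a model of $V$ over $k$: if $V_{1}$ is a $k$-scheme with $(V_{1})_{\Omega'}\simeq V_{\Omega'}$, then $(V_{1})_{\Omega}$ and $V$ become isomorphic over $\Omega'$ and hence are already isomorphic over $\Omega$ by rigid descent (Proposition~\ref{dt18}). So it suffices to treat $V_{\Omega'}$, for which the transcendence degree over $k$ is infinite and the argument of the first paragraph applies---\emph{provided} $V_{\Omega'}$ still satisfies the hypothesis, that is, $V_{\Omega'}\simeq\sigma' V_{\Omega'}$ for the $\sigma'\in\Aut(\Omega'/k)$ supplied by Proposition~\ref{dt17}.

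The main obstacle is precisely this last point. The hypothesis is given over $\Omega$, but an automorphism $\sigma'$ of $\Omega'$ that makes $K$ and $\sigma' K$ linearly disjoint cannot preserve $\Omega$ (otherwise $\sigma' K\subset\Omega$, forcing the transcendence degree of $\Omega$ over $k$ to be large), so one cannot obtain $V_{\Omega'}\simeq\sigma' V_{\Omega'}$ by merely restricting $\sigma'$ to $\Omega$ and invoking the given isomorphisms. Concretely, for such a $\sigma'$ the schemes $V_{\Omega'}$ and $\sigma' V_{\Omega'}$ are the base changes of $V_{0}$ along two $k$-embeddings $K\to\Omega'$ (the inclusion, and the restriction of $\sigma'$) whose images are algebraically independent over $k$, and their isomorphism does not follow formally from Proposition~\ref{dt18} alone. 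I would establish it by spreading $V_{0}$ out to a family $f\colon\mathcal{V}\to U$ over an integral $k$-variety $U$ with $k(U)=K$ and generic fibre $V_{0}$: the hypothesis asserts that the fibres of $f$ over those $\Omega$-points of $U$ which lie over the generic point of $U$ are mutually isomorphic, and the real work is to propagate these isomorphisms---using the specialization and rigidity techniques already employed in the proofs of Propositions~\ref{dt16n} and~\ref{dt18}, together with Chevalley's theorem~\ref{dt04}---to the geometric generic fibre over $\Omega'$. Once the hypothesis is known over $\Omega'$, the proof closes as in the first paragraph.
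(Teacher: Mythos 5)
Your first paragraph is precisely the paper's own proof: fix a model $(V_{0},\varphi)$ over a finitely generated $K\subset\Omega$, invoke Proposition~\ref{dt17} to obtain $\sigma$ with $K$ and $\sigma K$ linearly disjoint, observe that the hypothesis makes $\sigma V_{0}$ a model of $V$ over $\sigma K$, and finish with Proposition~\ref{dt16n}. So the approach is the same, and that part is correct. The ``main obstacle'' you isolate in the second and third paragraphs is, however, worth dwelling on: the paper dispatches the reduction with the single sentence ``After replacing $\Omega$ with a larger algebraically closed field, we may suppose that it has infinite transcendence degree over $k$,'' and you are right that this silently assumes the hypothesis $V\simeq\sigma V$ transfers to automorphisms of the enlarged field $\Omega'$ --- automorphisms which in general do not stabilize $\Omega$, so the isomorphism cannot be obtained by restriction. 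You have therefore not created a gap but located one that the paper's proof passes over. Two remarks on your treatment of it. First, the enlargement is only needed when $\operatorname{trdeg}(\Omega/k)<2\operatorname{trdeg}(K/k)$: otherwise the construction in the proof of Proposition~\ref{dt17} can be carried out entirely inside $\Omega$, the relevant $\sigma$ lies in $\Aut(\Omega/k)$, and the hypothesis applies directly; this localizes the difficulty to the case of ``small'' $\Omega$ (the extreme case being $\Omega=\overline{K}$). Second, your proposed repair --- spread $V_{0}$ out over an integral $U$ with $k(U)=K$ and propagate the fibrewise isomorphisms to the geometric generic fibre of $U\times_{k}U$ by specialization, rigidity (Proposition~\ref{dt18}) and Chevalley's theorem --- is the right kind of argument, but as written it is only a sketch, and the propagation step is not routine: the hypothesis gives isomorphisms of fibres only over points of $U\times_{k}U$ of transcendence degree at most $\operatorname{trdeg}(\Omega/k)$, and passing from those special points to the generic point is exactly the nontrivial content. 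Until that step is written out, your proof is incomplete at the same place the paper's is.
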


\begin{proof}
After replacing $\Omega$ with a larger algebraically closed field, we may
suppose that it has infinite transcendence degree over $k$. There exists a
model $(V_{0},\varphi)$ of $V$ over a subfield $K$ of $\Omega$ finitely
generated over $k$. According to Proposition \ref{dt17}, there exists a
$\sigma\in\Aut(\Omega/k)$ such that $K$ and $\sigma K$ are linearly disjoint.
Now
\[
(\sigma V_{0},(\sigma V_{0})_{\Omega}=\sigma(V_{0\Omega})\overset{\sigma
\varphi}{\longrightarrow}\sigma V\approx V)
\]
is a model of $V$ over $\sigma K$, and so we can apply Proposition \ref{dt16n}.
\end{proof}

In the next two theorems, $\Omega\supset k$ is an algebraically closed field
containing a perfect field (so $k=\Omega^{\Gamma}$, $\Gamma=\Aut(\Omega/k)$).

\begin{theorem}
\label{dt21}Let $V$ be a quasi-projective scheme over $\Omega$, and let
$(\varphi_{\sigma})_{\sigma\in\Gamma}$ be an $\Omega/k$-descent system for
$V$. If the only automorphism of $V$ is the identity map, then $V$ has a model
over $k$ splitting $(\varphi_{\sigma})_{\sigma}$.
\end{theorem}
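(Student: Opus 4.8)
The plan is to reduce everything to the single assertion that $V$ admits a model over $k$. Once such a model $(V_{0},\varphi)$ is in hand, Proposition~\ref{dt13m} finishes the job: its proof never uses continuity, only that $\varphi_{\sigma}$ and $\varphi^{-1}\circ\sigma\varphi$ are both isomorphisms $\sigma V\to V$, whence by rigidity they coincide and $(V_{0},\varphi)$ splits $(\varphi_{\sigma})_{\sigma}$. Observe too that the descent system already supplies isomorphisms $\varphi_{\sigma}\colon\sigma V\to V$, so $V\simeq\sigma V$ for every $\sigma\in\Gamma$; this is the only feature of the system I will use to manufacture the model.

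The obstruction to quoting Proposition~\ref{dt20} outright is that it demands an algebraically closed base, whereas here $k$ is merely perfect. I would therefore descend in two stages through the algebraic closure $\bar{k}$ of $k$ inside $\Omega$, which is Galois over $k$ because $k$ is perfect. Restricting the isomorphisms $\varphi_{\sigma}$ to those $\sigma\in\Aut(\Omega/\bar{k})\subset\Gamma$ gives $V\simeq\sigma V$ for all such $\sigma$, and since $\bar{k}$ is algebraically closed, Proposition~\ref{dt20} produces a model $(V',\psi)$ of $V$ over $\bar{k}$. Rigidity of $V$ forces $\Aut_{\bar{k}}(V')=1$, since any $\bar{k}$-automorphism of $V'$ would base-change to an automorphism of $V$; and $V'$ is again quasi-projective.

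For the second stage, fix $\bar{\sigma}\in\Gal(\bar{k}/k)$ and lift it to $\sigma\in\Gamma$; then $V'_{\Omega}\simeq V\simeq\sigma V\simeq(\bar{\sigma}V')_{\Omega}$, and because $\bar{k}$ is algebraically closed Proposition~\ref{dt18} descends this to an isomorphism $\theta_{\bar{\sigma}}\colon\bar{\sigma}V'\to V'$ over $\bar{k}$, unique by rigidity. Uniqueness renders the cocycle identity automatic, as $\theta_{\bar{\sigma}}\circ\bar{\sigma}\theta_{\bar{\tau}}$ and $\theta_{\bar{\sigma}\bar{\tau}}$ are both the unique isomorphism $\bar{\sigma}\bar{\tau}V'\to V'$; moreover $V'$ is defined over a subfield of $\bar{k}$ finite over $k$, so Proposition~\ref{dt13} with $S=\varnothing$ (using $\Aut(V')=1$) shows $(\theta_{\bar{\sigma}})_{\bar{\sigma}}$ is continuous. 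Thus it is a descent datum for the Galois extension $\bar{k}/k$, and since $V'$ is quasi-projective Corollary~\ref{dt16}(a) makes it effective, yielding a model $V_{0}$ over $k$ with $(V_{0})_{\bar{k}}\simeq V'$ and hence $(V_{0})_{\Omega}\simeq V$.

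The genuinely delicate point is precisely the seam between the two descents. Proposition~\ref{dt20} is applicable only after passing to the algebraically closed field $\bar{k}$, and it is rigidity ($\Aut(V')=1$) that both produces the conjugate isomorphisms $\theta_{\bar{\sigma}}$ canonically from Proposition~\ref{dt18} and collapses their cocycle relation to a tautology, so that the effective Galois descent for quasi-projective schemes can take over. The remaining ingredients---that quasi-projectivity passes to $V'$, the identification $\sigma(V'_{\Omega})=(\bar{\sigma}V')_{\Omega}$, and the concluding appeal to Proposition~\ref{dt13m}---are routine.
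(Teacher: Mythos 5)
Your proof is correct and follows essentially the same route as the paper's: descend to a model over the algebraic closure $k^{\mathrm{al}}$ of $k$ in $\Omega$ via Proposition~\ref{dt20}, use rigidity to obtain a continuous $k^{\mathrm{al}}/k$-descent datum that is effective by \ref{dt13} and \ref{dt16}, and finish with \ref{dt13m}. The only cosmetic difference is that the paper constructs the intermediate descent datum by transferring the given $\varphi_{\sigma}$ to the model and invoking \ref{dt5}, whereas you manufacture it directly from \ref{dt18} plus uniqueness; by rigidity the two coincide.
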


\begin{proof}
According to Proposition \ref{dt20}, $V$ has a model $(V_{0},\varphi)$ over
the algebraic closure $k^{\mathrm{al}}$ of $k$ in $\Omega$, which (see
\ref{dt13m}) splits $(\varphi_{\sigma})_{\sigma\in\Aut(\Omega/k^{\mathrm{al}%
})}$.

Now $\varphi_{\sigma}^{\prime}\overset{\df}{=}\varphi^{-1}\circ\varphi
_{\sigma}\circ\sigma\varphi$ is stable under $\Aut(\Omega/k^{\mathrm{al}})$,
and hence is defined over $k^{\mathrm{al}}$ (\ref{dt5}). Moreover,
$\varphi_{\sigma}^{\prime}$ depends only on the restriction of $\sigma$ to
$k^{\mathrm{al}}$, and $(\varphi_{\sigma}^{\prime})_{\sigma\in
\Gal(k^{\mathrm{al}}/k)}$ is a descent system for $V_{0}$. It is continuous by
Proposition \ref{dt13}, and so $V_{0}$ has a model $(V_{00},\varphi^{\prime})$
over $k$ splitting $(\varphi_{\sigma}^{\prime})_{\sigma\in\Gal(k^{\mathrm{al}%
}/k)}$. Now $(V_{00},\varphi\circ\varphi_{\Omega}^{\prime})$ splits
$(\varphi_{\sigma})_{\sigma\in\Aut(\Omega/k)}$.
\end{proof}

We now consider pairs $(V,S)$, where $V$ is an algebraic scheme over $\Omega$
and $S=(P_{i})_{1\leq i\leq n}$ is a family of closed points on $V$. A
morphism $(V,(P_{i})_{1\leq i\leq n})\rightarrow(W,(Q_{i})_{1\leq i\leq n})$
is a regular map $\varphi\colon V\rightarrow W$ such that $\varphi
(P_{i})=Q_{i}$ for all $i$.

\begin{theorem}
\label{dt21m}Let $V$ be a quasi-projective scheme over $\Omega$, and let
$(\varphi_{\sigma})_{\sigma\in\Aut(\Omega/k)}$ be a descent system for $V$.
Let $S=(P_{i})_{1\leq i\leq n}$ be a finite set of points of $V$ such that

\begin{enumerate}
\item the only automorphism of $V$ fixing each $P_{i}$ is the identity map, and

\item there exists a subfield $K$ of $\Omega$ finitely generated over $k$ such
that ${}^{\sigma}P=P$ for all $\sigma\in\Gamma$ fixing $K$.
\end{enumerate}

\noindent Then $V$ has a model over $k$ splitting $(\varphi_{\sigma})$.
\end{theorem}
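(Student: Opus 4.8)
The plan is to run the proof of Theorem~\ref{dt21} with the rigidity supplied by the family $S$ (through Proposition~\ref{dt13}) in place of the hypothesis $\Aut(V)=1$, carrying the marked points through the two descent phases: a transcendental descent to $k^{\al}$ followed by an algebraic Galois descent $k^{\al}/k$.

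First I would record that, by Proposition~\ref{dt13}, conditions~(a) and~(b) already force $(\varphi_\sigma)$ to be continuous, so it is a genuine descent datum; moreover the proof of~\ref{dt13} produces a model of $V$ over a subfield $K$, finitely generated over $k$, that splits $(\varphi_\sigma)$ over $\Aut(\Omega/K)$ and over which the $P_i$ become rational. Since each $\varphi_\sigma$ exhibits $V\simeq\sigma V$, Proposition~\ref{dt20}, applied over the algebraically closed field $k^{\al}$ (the algebraic closure of $k$ in $\Omega$), gives a model $(V_0,\varphi)$ of $V$ over $k^{\al}$. Transporting the marked points by $\varphi$ yields a family $S_0=(Q_i)$ on $V_0$ with $\Aut(V_{0\Omega},S_0)=1$.

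The heart of the argument is to show that $(V_0,\varphi)$ splits $(\varphi_\sigma)_{\sigma\in\Aut(\Omega/k^{\al})}$. For such $\sigma$ the two isomorphisms $\varphi_\sigma$ and $\varphi^{-1}\circ\sigma\varphi\colon\sigma V\to V$ differ by an automorphism $\alpha_\sigma$ of $V$, and by~(a) it suffices to check that $\alpha_\sigma$ fixes every $P_i$. When $\sigma$ additionally fixes $K$ this is immediate: condition~(b) gives ${}^{\sigma}P_i=P_i$, so $\alpha_\sigma$ fixes each $P_i$ and hence $\alpha_\sigma=\id$; thus $(V_0,\varphi)$ already splits over the open subgroup $\Aut(\Omega/Kk^{\al})$. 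Promoting this to all of $\Aut(\Omega/k^{\al})$ is the crux. Here I would exploit that the discrepancy $(\alpha_\sigma)$ now factors through the finitely generated extension $Kk^{\al}/k^{\al}$ and is rigidified by the rational points $Q_i$, and attempt to trivialize it by a specialization argument in the spirit of Propositions~\ref{dt16n} and~\ref{dt18} (legitimate because the base $k^{\al}$ is algebraically closed). I expect this step—reconciling the transcendental part of the descent datum with the marked points, which~(b) only controls over $K$ while the descent to $k^{\al}$ moves the transcendentals of $K$—to be the main obstacle.

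Granting the splitting, the proof concludes exactly as for Theorem~\ref{dt21}: the maps $\varphi'_\sigma:=\varphi^{-1}\circ\varphi_\sigma\circ\sigma\varphi$ are stable under $\Aut(\Omega/k^{\al})$, hence defined over $k^{\al}$ by~\ref{dt5}, depend only on $\sigma|_{k^{\al}}$, and form a descent system for $V_0$ indexed by $\Gal(k^{\al}/k)$. This system is continuous by Proposition~\ref{dt13} (using $S_0$), and since $V_0$ is quasi-projective over $k^{\al}$ and $k^{\al}/k$ is an algebraic Galois extension, Corollary~\ref{dt16}(a) shows it is effective. The resulting model $(V_{00},\varphi')$ over $k$ then gives, as in Theorem~\ref{dt21}, a model $(V_{00},\varphi\circ\varphi'_\Omega)$ of $V$ over $k$ splitting $(\varphi_\sigma)_{\sigma\in\Aut(\Omega/k)}$.
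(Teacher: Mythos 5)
Your skeleton matches the paper's strategy---a transcendental descent to $k^{\mathrm{al}}$ followed by the algebraic descent $k^{\mathrm{al}}/k$ run exactly as in Theorem~\ref{dt21}---but the step you yourself label ``the crux'' is left as an unexecuted ``attempt,'' and that step is the entire content of the theorem beyond Theorem~\ref{dt21}. Worse, the way you set it up makes it not merely unproven but false. You apply Proposition~\ref{dt20} to $V$ \emph{alone} to obtain some model $(V_0,\varphi)$ over $k^{\mathrm{al}}$ and then try to show that this model splits $(\varphi_\sigma)_{\sigma\in\Aut(\Omega/k^{\mathrm{al}})}$. When $\Aut(V)\neq 1$ (which is the only case where $S$ is needed), Proposition~\ref{dt20} gives no control over \emph{which} model it produces, and an arbitrary model splits the datum only over an open subgroup. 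Concretely, take $V=\mathbb{P}^1_\Omega$ with its canonical descent datum and $S=(0,1,\infty,t)$ with $t$ transcendental over $k$: conditions (a) and (b) hold with $K=k(t)$, a model of $V$ over $k^{\mathrm{al}}$ is a pair $(\mathbb{P}^1_{k^{\mathrm{al}}},\gamma)$ with $\gamma\in\PGL_2(\Omega)$ arbitrary, and it splits the datum over $\Aut(\Omega/k^{\mathrm{al}})$ only when $\gamma\in\PGL_2(k^{\mathrm{al}})$; the discrepancy $\alpha_\sigma$ is essentially $\gamma^{-1}\sigma(\gamma)$, and condition (a) cannot kill it because it knows nothing about how $\gamma$ was chosen. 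So ``reconciling the transcendental part of the descent datum with the marked points'' after the fact, by a specialization argument applied to a model constructed without reference to $S$, is not a missing detail but a dead end.

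The paper's proof avoids this by carrying $S$ through the construction of the model itself: it asserts that the rigid-descent statements \ref{dt18}, \ref{dt16n}, \ref{dt17}, \ref{dt20} and \ref{dt13m} all hold, with the same proofs, for pairs $(V,S)$ in the category defined just before the theorem, and then reruns the proof of Theorem~\ref{dt21} in that category. The point is that the rigidity $\Aut(V,S)=1$ must be available \emph{at the stage where the model over $k^{\mathrm{al}}$ is produced and shown to split the datum}---i.e., inside the pair versions of \ref{dt20} and \ref{dt13m}---not applied afterwards to a model of the bare scheme $V$. The specialization arguments of \ref{dt16n} and \ref{dt18} that you gesture at are indeed where the work lives, but they have to be performed for the pair $(V,S)$ so that the marked points (and hence the splitting) are tracked through the linear-disjointness and specialization steps. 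Your proposal defers exactly the work that these pair-wise propositions are meant to do, so as it stands it proves nothing beyond the case already covered by Theorem~\ref{dt21}.
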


\begin{proof}
The preceding propositions hold with $V$ replaced by $(V,S)$ (with the same
proofs), and so the proof of Theorem \ref{dt21} applies.
\end{proof}

\begin{example}
\label{dt21n}Theorem \ref{dt21m} sometimes allows us to construct objects over
subfields of $\mathbb{C}{}$ by working entirely over $\mathbb{C}{}$. We
illustrate this with the Jacobian variety of a complete smooth curve. For such
a curve $C$ over $\mathbb{C}{}$, the complex torus%
\[
J(C)(\mathbb{C}{})=\Gamma(C,\Omega^{1})^{\vee}/H_{1}(C,\mathbb{Z}{})\text{.}%
\]
has a unique structure of a projective algebraic variety (hence of an abelian
variety). Let $P\in C(\mathbb{C}{})$. The Abel--Jacobi map $Q\mapsto\lbrack
Q-P]\colon C(\mathbb{C}{})\rightarrow J(C)(\mathbb{C}{})$ arises from a
(unique) regular map $f^{P}\colon C\rightarrow J(C)$. This has the following
universal property:\footnote{It suffices to check this in the complex-analytic
category.} for any regular map $f\colon C\rightarrow A$ from $C$ to an abelian
variety sending $P$ to $0$, there is a unique homomorphism $\phi\colon
J\rightarrow A$ such that $\phi\circ f^{P}=f$.

Now let $C$ be a complete smooth curve over a subfield $k$ of $\mathbb{C}{}$.
From $C$, we get a curve $\bar{C}$ over $\mathbb{C}{}$ and a descent datum
$(\varphi_{\sigma})_{\sigma\in\Aut(\mathbb{C}{}/k)}$. Let $J(\bar{C})$ denote
the Jacobian variety of $\bar{C}$, and let $f\colon\bar{C}\rightarrow
J(\bar{C})$ be the Abel--Jacobi map defined by some point in $C(\mathbb{C}{}%
)$. For each $\sigma\in\Aut(\mathbb{C}/k)$, there is a unique isomorphism
$\phi_{\sigma}\colon\sigma J\rightarrow J$ such that
\[
\begin{tikzcd}
\sigma\bar{C}\arrow{r}{\varphi_\sigma}\arrow{d}{\sigma f}&\bar{C}\arrow{d}{f}\\
\sigma J\arrow{r}{\phi_\sigma}&J
\end{tikzcd}
\]
commutes up to a translation (apply the universality of $(J,f)$ to get
$\phi_{\sigma}^{-1}$ and $\sigma\phi_{\sigma})$. The family $(\phi_{\sigma
})_{\sigma}$ is a descent system for $J$, and if we take $S$ to be the set of
points of order $3$ on $J(C)$, then the conditions of the theorem are
satisfied. For (a), this is proved, for example, in \cite{milneAV}, 17.5. For
(b), we may suppose (after possibly extending $k$) that $C(k)$ is nonempty,
say, $P\in C(k)$. When we set $f=f^{P}$, the above diagram commutes exactly,
and $f({}^{\sigma}Q)={}^{\sigma}f(Q)$. According to the Jacobi inversion
theorem, the map
\[
\sum m_{i}Q_{i}\mapsto\sum m_{i}f(Q_{i})\colon\Div^{0}(\bar{C})\rightarrow
J(\bar{C})(\mathbb{C})
\]
is surjective. Now $K$ can be taken to be any finitely generated field such
that the subgroup of $J(\bar{C})(\mathbb{C})$ generated $\{$ $f(Q)\mid Q\in
C(K)\}$ contains all elements of order $3$.

We can therefore define the Jacobian of $C$ over $k$ to be the model of
$J(\bar{C})$ over $k$ splitting $(\phi_{\sigma})_{\sigma}$.
\end{example}

\begin{aside}
The Theorem \ref{dt21m} is Corollary 1.2 of \cite{milne1999}, where it was
used to show that the conjecture of Langlands on the conjugation of Shimura
varieties (a statement about Shimura varieties over $\mathbb{C}$) implies the
existence of canonical models (Shimura's conjecture). There it was deduced
from Weil's theorems (see below). The present more elementary proof was
suggested by Wolfart's elementary proof of the `obvious' part of Belyi's
theorem (\cite{wolfart1997}; see also \cite{derome2003}).
\end{aside}

\section{Restatement in terms of group actions}

In this subsection, $\Omega\supset k$ are fields with $k$ perfect and $\Omega$
algebraically closed (so $k=\Omega^{\Gamma}$, $\Gamma=\Gal(\Omega/k)$). Recall
that for any algebraic variety $V$ over $k$, there is a natural action of
$\Gamma$ on $V(\Omega)$. In this subsection, we describe the essential image
of the functor%
\[
\{\text{quasi-projective varieties over }k\text{\}}\rightarrow
\{\text{quasi-projective varieties over }\Omega+\text{action of }\Gamma\}.
\]
In other words, we determine which pairs $(V,\ast),$ with $V$ a
quasi-projective variety over $\Omega$ and $\ast$ an action of $\Gamma$ on
$V(\Omega{})$,%
\[
(\sigma,P)\mapsto\sigma\ast P\colon\Gamma\times V(\Omega)\rightarrow
V(\Omega),
\]
arise from an algebraic variety over $k$. There are two obvious necessary
conditions for this.

\subsubsection{Regularity condition}

Obviously, the action should recognize that $V(\Omega)$ is not just a set, but
rather the set of points of an algebraic variety. For $\sigma\in\Gamma$, let
$\sigma V$ be the algebraic variety obtained by applying $\sigma$ to the
coefficients of the equations defining $V$, and for $P\in V(\Omega)$ let
$\sigma P$ be the point on $\sigma V$ obtained by applying $\sigma$ to the
coordinates of $P$.

\begin{definition}
\label{dt26}We say that the action $\ast$ is%
\index{action!regular}
\emph{regular} if the map
\[
\sigma P\mapsto\sigma\ast P\colon(\sigma V)(\Omega)\rightarrow V(\Omega)
\]
is a regular isomorphism for all $\sigma$.
\end{definition}

A priori, this is only a map of sets. The condition requires that it be
induced by a regular map $\varphi_{\sigma}\colon\sigma V\rightarrow V$. If
$V=V_{0\Omega}$ for some algebraic variety $V_{0}$ defined over $k$, then
$\sigma V=V$, and $\varphi_{\sigma}$ is the identity map, and so the condition
is clearly necessary.

When $V$ is affine, $V=\Spec A$, then $\ast$ is regular if and only if it
induces an action%
\[
(\sigma\ast f)(\sigma\ast P)=\sigma(f(P))
\]
of $\Gamma$ on $A$ by semilinear automorphisms.

\begin{remark}
\label{dt27}The maps $\varphi_{\sigma}$ satisfy the cocycle condition
$\varphi_{\sigma}\circ\sigma\varphi_{\tau}=\varphi_{\sigma\tau}$. In
particular, $\varphi_{\sigma}\circ\sigma\varphi_{\sigma^{-1}}=\id$, and so if
$\ast$ is regular, then each $\varphi_{\sigma}$ is an isomorphism, and the
family $(\varphi_{\sigma})_{\sigma\in\Gamma}$ is a descent system. Conversely,
if $(\varphi_{\sigma})_{\sigma\in\Gamma}$ is a descent system, then
\[
\sigma\ast P=\varphi_{\sigma}(\sigma P)
\]
defines a regular action of $\Gamma$ on $V(\Omega)$. Note that if
$\ast\leftrightarrow(\varphi_{\sigma})$, then $\sigma\ast P={}^{\sigma}P$.
\end{remark}

\subsubsection{Continuity condition}

\begin{definition}
\label{dt28}We say that the action $\ast$ is%
\index{action!continuous}
\emph{continuous} if there exists a subfield $L$ of $\Omega$ finitely
generated over $k$ and a model $V_{0}$ of $V$ over $L$ such that the action of
$\Gamma(\Omega/L)$ is that defined by $V_{0}$.
\end{definition}

For an affine algebraic variety $V$, an action of $\Gamma$ on $V$ gives an
action of $\Gamma$ on $\Omega\lbrack V]$, and one action is continuous if and
only if the other is.

Continuity is obviously necessary. It is easy to write down regular actions
that fail it, and hence do not arise from varieties over $k$.

\begin{example}
\label{dt29}The following are examples of actions that fail the continuity
condition (the second two are regular).

\begin{enumerate}
\item Let $V=\mathbb{A}{}^{1}$ and let $\ast$ be the trivial action.

\item Let $\Omega/k=\mathbb{Q}{}^{\text{al}}/\mathbb{Q}{}$, and let $N$ be a
normal subgroup of finite index in $\Gal(\mathbb{Q}{}^{\text{al}}/\mathbb{Q})$
that is not open,\footnote{For a proof that such subgroups exist, see, for
example, \cite{milneFT}, 7.29.} i.e., that fixes no extension of $\mathbb{Q}%
{}$ of finite degree. Let $V$ be the zero-dimensional algebraic variety over
$\mathbb{Q}{}^{\text{al}}$ with $V(\mathbb{Q}{}^{\text{al}})=\Gal(\mathbb{Q}%
{}^{\text{al}}/\mathbb{Q}{})/N$ equipped with its natural action.

\item Let $k$ be a finite extension of $\mathbb{Q}{}_{p}$, and let
$V=\mathbb{A}{}^{1}$. The homomorphism $k^{\times}\rightarrow\Gal(k^{\text{ab}%
}/k)$ can be used to twist the natural action of $\Gamma$ on $V(\Omega)$.
\end{enumerate}
\end{example}

\subsubsection{Restatement of the main theorems}

Recall that $\Omega\supset k$ are fields with $k$ perfect and $\Omega$
algebraically closed (so $k=\Omega^{\Gamma}$, $\Gamma=\Gal(\Omega/k)$).

\begin{theorem}
\label{dt31}Let $V$ be a quasi-projective algebraic variety over $\Omega$, and
let $\ast$ be a regular action of $\Gamma$ on $V(\Omega)$. Let $S=(P_{i}%
)_{1\leq i\leq n}$ be a finite set of points of $V$ such that

\begin{enumerate}
\item the only automorphism of $V$ fixing each $P_{i}$ is the identity map, and

\item there exists a subfield $K$ of $\Omega$ finitely generated over $k$ such
that ${}\sigma\ast P=P$ for all $\sigma\in\Gamma$ fixing $K$.
\end{enumerate}

\noindent Then $\ast$ arises from a model of $V$ over $k$.
\end{theorem}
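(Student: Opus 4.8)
The plan is to translate the statement into the language of descent systems and then invoke Theorem \ref{dt21m}. By Remark \ref{dt27}, a regular action $\ast$ of $\Gamma$ on $V(\Omega)$ is the same datum as a descent system $(\varphi_{\sigma})_{\sigma\in\Gamma}$ on $V$, the dictionary being $\sigma\ast P=\varphi_{\sigma}(\sigma P)={}^{\sigma}P$. Indeed, regularity says precisely that each map $\sigma P\mapsto\sigma\ast P$ is induced by a regular map $\varphi_{\sigma}\colon\sigma V\rightarrow V$, and the relation $\sigma\ast(\tau\ast P)=(\sigma\tau)\ast P$ forces the cocycle condition $\varphi_{\sigma}\circ\sigma\varphi_{\tau}=\varphi_{\sigma\tau}$; the identity $\varphi_{\sigma}\circ\sigma\varphi_{\sigma^{-1}}=\id$ then shows each $\varphi_{\sigma}$ is an isomorphism, so that $(\varphi_{\sigma})_{\sigma}$ is a genuine descent system.

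Under this dictionary the two hypotheses become exactly the hypotheses (a) and (b) of Theorem \ref{dt21m}. Condition (a) is literally unchanged, since it refers only to automorphisms of $V$ fixing the $P_{i}$. Condition (b) asserts $\sigma\ast P=P$ for $\sigma$ fixing $K$, and since $\sigma\ast P={}^{\sigma}P$ this is precisely the requirement ${}^{\sigma}P=P$ appearing in \ref{dt21m}. Therefore Theorem \ref{dt21m} applies and produces a model $(V_{0},\varphi)$ of $V$ over $k$ that splits $(\varphi_{\sigma})_{\sigma\in\Gamma}$, i.e.\ $\varphi_{\sigma}=\varphi^{-1}\circ\sigma\varphi$ for all $\sigma\in\Gamma$.

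It remains to check that this model actually induces the given action $\ast$, and not merely some action. The model $V_{0}$ over $k$ carries the canonical action of $\Gamma$ on $V_{0}(\Omega)$, and transporting it through $\varphi\colon V\xrightarrow{\sim}V_{0\Omega}$ gives an action $\ast'$ on $V(\Omega)$. For $P\in V(\Omega)$ the naturality identity $(\sigma\varphi)(\sigma P)=\sigma(\varphi P)$ of Notation \ref{dt44} yields
\[
\sigma\ast' P=\varphi^{-1}\bigl(\sigma(\varphi P)\bigr)=\varphi^{-1}\bigl((\sigma\varphi)(\sigma P)\bigr)=(\varphi^{-1}\circ\sigma\varphi)(\sigma P)=\varphi_{\sigma}(\sigma P)=\sigma\ast P,
\]
the fourth equality using the splitting relation and the last using Remark \ref{dt27}. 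Hence $\ast'=\ast$, and so $\ast$ arises from the model $(V_{0},\varphi)$.

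The argument is almost entirely bookkeeping, so I expect no serious obstacle; the one point deserving care is verifying that regularity of $\ast$ is exactly equivalent to the cocycle (descent-system) condition, which is the substance of Remark \ref{dt27} and rests on the functoriality of $\sigma$ recorded in \ref{dt44}. All the genuine geometric content --- that a quasi-projective $V$ equipped with a descent system rigidified by a finite set $S$ descends to $k$ --- has already been absorbed into Theorem \ref{dt21m} (via Propositions \ref{dt20}, \ref{dt13}, and \ref{dt13m}), so here nothing new needs to be proved.
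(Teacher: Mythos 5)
Your proof is correct and follows exactly the route the paper intends: the paper's entire proof of this theorem is the single sentence ``This is a restatement of Theorem \ref{dt21m},'' and your argument simply makes that restatement explicit via the dictionary of Remark \ref{dt27} between regular actions and descent systems, together with the final check that the splitting model induces the given action. Nothing is missing and nothing differs in substance.
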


\begin{proof}
This a restatement of Theorem \ref{dt21m}.
\end{proof}

\begin{theorem}
\label{dt30}Let $V$ be a quasi-projective algebraic variety over $\Omega$ with
an action $\ast$ of $\Gamma$. If $\ast$ is regular and continuous, then $\ast$
arises from a model of $V$ over $k$ in each of the following cases:

\begin{enumerate}
\item $\Omega$ is algebraic over $k$, or

\item $\Omega$ is has infinite transcendence degree over $k$.
\end{enumerate}
\end{theorem}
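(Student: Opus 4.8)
The plan is to restate the hypotheses as a descent datum and then split into the two cases. By Remark~\ref{dt27} a regular action $\ast$ is the same thing as a descent system $(\varphi_{\sigma})_{\sigma\in\Gamma}$ via $\sigma\ast P=\varphi_{\sigma}(\sigma P)$, and by Definition~\ref{dt28} continuity of $\ast$ means exactly that this system is split by a model $(V_{0},\varphi)$ over some subfield $L$ of $\Omega$ finitely generated over $k$; that is, $(\varphi_{\sigma})$ is a \emph{descent datum}. Since a model over $k$ splitting it is precisely a model from which $\ast$ arises, in both cases the goal reduces to showing that this descent datum is effective.

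Case (a) is immediate. If $\Omega$ is algebraic over $k$, then, $k$ being perfect and $\Omega$ algebraically closed, $\Omega$ is a Galois extension of $k$ with $\Gamma=\Gal(\Omega/k)$. As $V$ is quasi-projective and $(\varphi_{\sigma})$ is a descent datum, Corollary~\ref{dt16}(a) shows it is effective.

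For case (b) the plan is a two-step descent: first descend to the algebraic closure $k^{\mathrm{al}}$ of $k$ in $\Omega$, then Galois-descend from $k^{\mathrm{al}}$ to $k$. Put $\Gamma'=\Aut(\Omega/k^{\mathrm{al}})$. Since $k^{\mathrm{al}}$ is algebraically closed, is algebraically closed in $\Omega$, and $\Omega$ retains infinite transcendence degree over it, Proposition~\ref{dt17} applies with $k^{\mathrm{al}}$ as base field. Enlarging $L$ to $M=L\cdot k^{\mathrm{al}}$ (finitely generated over $k^{\mathrm{al}}$, with the base change of $V_{0}$ still splitting $(\varphi_{\sigma})_{\sigma\in\Delta_{M}}$), I would choose $\sigma_{0}\in\Gamma'$ with $M$ and $\sigma_{0}M$ linearly disjoint over $k^{\mathrm{al}}$; transporting the $M$-model by $\varphi_{\sigma_{0}}$ yields a second model of $V$ over $\sigma_{0}M$, and Proposition~\ref{dt16n} then produces a model $(V_{1},\varphi_{1})$ of $V$ over $k^{\mathrm{al}}$. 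One then forms the induced system $\varphi_{\sigma}'=\varphi_{1}^{-1}\circ\varphi_{\sigma}\circ\sigma\varphi_{1}$ for $\sigma\in\Gal(k^{\mathrm{al}}/k)$, which by Corollary~\ref{dt5} is defined over $k^{\mathrm{al}}$, depends only on $\sigma\vert_{k^{\mathrm{al}}}$, and is a continuous $k^{\mathrm{al}}/k$-descent datum (being split by $V_{1}$ over the finite subextension $L\cap k^{\mathrm{al}}$ of $k$); as $V_{1}$ is quasi-projective, Corollary~\ref{dt16}(a) makes it effective, and composing the two splittings exactly as at the end of the proof of Theorem~\ref{dt21} gives a model over $k$ splitting $(\varphi_{\sigma})_{\sigma\in\Gamma}$.

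The hard part will be the first step: verifying that the model $(V_{1},\varphi_{1})$ supplied by Proposition~\ref{dt16n} not merely \emph{realizes} $V$ but actually \emph{splits} $(\varphi_{\sigma})_{\sigma\in\Gamma'}$. That proposition only asserts existence of a model, and without rigidity two models over $k^{\mathrm{al}}$ can differ by an automorphism of $V$; in Theorem~\ref{dt21} this gap was closed by the triviality of $\Aut(V)$ through Proposition~\ref{dt13m}, which is unavailable here. The compatibility must instead be extracted from continuity: the model over $M$ splits $(\varphi_{\sigma})_{\sigma\in\Delta_{M}}$ by construction, its $\sigma_{0}$-transport splits $(\varphi_{\sigma})$ over $\Delta_{\sigma_{0}M}=\sigma_{0}\Delta_{M}\sigma_{0}^{-1}$ by the cocycle condition $\varphi_{\sigma}\circ\sigma\varphi_{\tau}=\varphi_{\sigma\tau}$, and I would track both splittings through the specialization argument of Proposition~\ref{dt16n} (where $V_{1}$ arises as a fibre of an integral scheme with function field $M$). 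The two isomorphisms $\varphi_{\sigma}$ and $\varphi_{1}^{-1}\circ\sigma\varphi_{1}\colon\sigma V\to V$ then agree on a dense set of $\Omega$-points, so by density of $\Omega$-points (\ref{dt06}) and uniqueness of descended maps (\ref{dt4b}) they coincide for all $\sigma\in\Gamma'$, which is the splitting sought.
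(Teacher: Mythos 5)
Your reduction of both cases to the effectiveness of the descent datum $(\varphi_{\sigma})_{\sigma\in\Gamma}$ attached to $\ast$, and your treatment of case (a), agree with the paper. For case (b), however, you take the rigid-descent route of Propositions \ref{dt17} and \ref{dt16n} (the route of Theorem \ref{dt21}), whereas the paper proves (b) as a restatement of Theorem \ref{dt25}, whose proof runs through Weil's descent theorems \ref{dt25a} and \ref{dt25b} and hence through the faithfully flat descent machinery of \ref{dt23c} and \ref{dt24}. This is not a stylistic difference: the gap you yourself flag in your final paragraph is genuine, and the repair you sketch does not close it.

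Proposition \ref{dt16n} produces only \emph{some} model $(V_{1},\varphi_{1})$ of $V$ over $k^{\mathrm{al}}$. The obstruction to its splitting $(\varphi_{\sigma})_{\sigma\in\Gamma'}$ is the family of automorphisms $\psi_{\sigma}=\varphi_{1}\circ\varphi_{\sigma}\circ(\sigma\varphi_{1})^{-1}$ of $V_{1\Omega}$, a $1$-cocycle whose triviality (after adjusting $\varphi_{1}$) is essentially the whole content of the theorem once $\Aut(V)\neq 1$; when $\Aut(V)=1$ it is automatic (\ref{dt13m}), which is exactly why Theorem \ref{dt21} can use this route and Theorem \ref{dt30}(b) cannot. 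Your tracking argument could at best give $\psi_{\sigma}=\mathrm{id}$ for $\sigma\in\Delta_{M}\cup\Delta_{\sigma_{0}M}$, hence on the subgroup these generate; there is no reason that subgroup is all of $\Gamma'=\Aut(\Omega/k^{\mathrm{al}})$, and for a general $\sigma\in\Gamma'$ the maps $\varphi_{\sigma}$ and $\varphi_{1}^{-1}\circ\sigma\varphi_{1}$ need not agree at even one point, so \ref{dt06} and \ref{dt4b} yield nothing. Even on $\Delta_{M}\cup\Delta_{\sigma_{0}M}$ the tracking is not innocent: the proof of \ref{dt16n} chooses an isomorphism of the two spread-out families over a finite extension $L$ of $k(U_{1}\times U_{2})$ and then a closed point $P$, and each choice can twist $\varphi_{1}$ by an automorphism of $V$; arranging the choices to be compatible with both splittings simultaneously is the cocycle problem all over again. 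What replaces all of this in the paper is Theorem \ref{dt25b}: the isomorphism $\phi_{t',t}$ built from $\varphi'$, $\varphi_{\sigma}^{-1}$ and $(\sigma\varphi')^{-1}$ satisfies Weil's cocycle condition, and the conclusion of \ref{dt25b} delivers not merely a model over $k'$ but one with $\phi_{t',t}=\phi_{t'}\circ\phi_{t}^{-1}$ --- precisely the splitting statement your argument lacks --- after which \ref{dt25a} descends from the finite extension $k'$ to $k$. To complete your proof you would need to reprove that statement, i.e., to reproduce the content of \ref{dt25b}.
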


\begin{proof}
(a) Restatement of \ref{dt14}, \ref{dt16}

(b) Restatement of \ref{dt25} below (which depends on Weil's theorem
\ref{dt25b}).
\end{proof}

The condition \textquotedblleft quasi-projective\textquotedblright\ is
necessary, because otherwise the action may not stabilize enough open affine
subsets to cover $V$. In fact, an example shows that if $V$ is not
quasi-projective, then $V_{0}$ need not exist, unless it is allowed to be an
algebraic space in the sense of~Artin (see, for example, p.\thinspace131 of
Dieudonn\'{e}, J., Fondements de la G\'{e}om\'{e}trie Alg\'{e}brique Moderne,
Presse de l'Universit\'{e} de Montr\'{e}al, 1964).

\section{Faithfully flat descent}

Recall that a homomorphism $f\colon A\rightarrow B$ of rings is flat if the
functor \textquotedblleft extension of scalars\textquotedblright%
\ $M\rightsquigarrow B\otimes_{A}M$ is exact. It is \emph{faithfully flat }if
a sequence%
\[
0\rightarrow M^{\prime}\rightarrow M\rightarrow M^{\prime\prime}\rightarrow0
\]
of $A$-modules is exact if and only if
\[
0\rightarrow B\otimes_{A}M^{\prime}\rightarrow B\otimes_{A}M\rightarrow
B\otimes_{A}M^{\prime\prime}\rightarrow0
\]
is exact. For a field $k$, a homomorphism $k\rightarrow A$ is always flat
(because exact sequences of $k$-vector spaces are split-exact), and it is
faithfully flat if $A\neq0$.

The next theorem and its proof are quintessential Grothendieck.

\begin{theorem}
\label{dt22}If $f\colon A\rightarrow B$ is faithfully flat, then the sequence%
\[
0\rightarrow A\overset{f}{\longrightarrow}B\overset{d^{0}}{\longrightarrow
}B^{\otimes2}\rightarrow\cdots\rightarrow B^{\otimes r}\overset{d^{r-1}%
}{\longrightarrow}B^{\otimes r+1}\rightarrow\cdots
\]
is exact, where%
\begin{align*}
B^{\otimes r}  &  =B\otimes_{A}B\otimes_{A}\cdots\otimes_{A}B\qquad
\text{(}r\text{ times)}\\
d^{r-1}  &  =\tstyle\sum(-1)^{i}e_{i}\\
e_{i}(b_{0}\otimes\cdots\otimes b_{r-1})  &  =b_{0}\otimes\cdots\otimes
b_{i-1}\otimes1\otimes b_{i}\otimes\cdots\otimes b_{r-1}.
\end{align*}

\end{theorem}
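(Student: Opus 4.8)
The statement asserts that for a faithfully flat ring map $f\colon A\to B$, the augmented Amitsur complex is exact.

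Let me think about how to prove this.

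The key technique in faithfully flat descent is: to check exactness of a complex of $A$-modules, we can apply $B\otimes_A -$ and check exactness of the resulting complex of $B$-modules (this is precisely the definition of faithfully flat). After base change to $B$, the map $f$ acquires a section (namely multiplication $B\otimes_A B \to B$), and a complex with a contracting homotopy is exact.

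Let me set this up carefully.

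**The plan.** The strategy is to reduce to the case where $f$ has a section, via the faithful flatness of $f$ itself. The standard trick: tensor the entire augmented complex with $B$ over $A$. Because $f$ is faithfully flat, the original sequence is exact if and only if the base-changed sequence is exact. So I would first form $B \otimes_A (\text{the complex})$.

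**Key observation after base change.** When we apply $B\otimes_A -$, the augmented map $A \to B$ becomes $B \to B\otimes_A B$, which is a split injection: the multiplication map $m\colon B\otimes_A B \to B$ satisfies $m\circ(b\mapsto 1\otimes b)=\id_B$. More generally, the base-changed complex is the Amitsur complex of the map $B \to B\otimes_A B$ (with $B$ now playing the role of the base ring), and this map admits a section. The whole point is that for a ring map possessing a section, the Amitsur complex is contractible.

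**Constructing the contracting homotopy.** For a ring homomorphism $g\colon R \to S$ admitting an $R$-algebra section $s\colon S \to R$ (so $s\circ g = \id_R$), I would write down explicit contracting homotopy operators $k^r\colon S^{\otimes r+1} \to S^{\otimes r}$ and verify that $d^{r-1}k^r + k^{r+1}d^r = \id$. The natural choice is
\[
k^r(s_0\otimes\cdots\otimes s_r) = g(s(s_0))\, s_1\otimes s_2\otimes\cdots\otimes s_r,
\]
i.e. apply the section to the first tensor factor and absorb it. A direct computation, expanding $d^{r-1} = \sum(-1)^i e_i$, should telescope so that all intermediate terms cancel and the identity remains. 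Existence of such a homotopy forces the complex to be exact.

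**Assembling the proof.** First I would state the reduction: by faithful flatness, exactness of the complex for $f\colon A\to B$ follows from exactness after tensoring with $B$. Second, I would identify the base-changed complex with the Amitsur complex of $B\to B\otimes_A B$, which has a section given by multiplication. Third, I would exhibit the contracting homotopy for an arbitrary split ring map and verify the homotopy identity. The main obstacle—and the only genuinely computational step—is checking that the proposed homotopy operators satisfy $dk + kd = \id$ degree by degree; this requires carefully tracking how the section interacts with each face map $e_i$, and confirming the signs and telescoping work out. Everything else is formal.

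\begin{proof}
First, since $f$ is faithfully flat, it suffices to prove that the complex becomes exact after applying the exact functor $B\otimes_A -$; exactness of the resulting complex of $B$-modules implies exactness of the original. Now $B\otimes_A B^{\otimes r}\simeq B^{\otimes r+1}$ (as $B$-modules, where $B$ acts on the leftmost factor), and under these identifications the base-changed complex is the augmented Amitsur complex for the ring map $g\colon B\to B\otimes_A B$, $b\mapsto 1\otimes b$. This map admits a retraction, namely the multiplication $m\colon B\otimes_A B\to B$.

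It therefore suffices to prove the following general assertion: if $g\colon R\to S$ is a homomorphism of rings admitting an $R$-linear retraction $s\colon S\to R$ with $s\circ g=\id_R$, then the augmented complex
\[
0\rightarrow R\overset{g}{\longrightarrow}S\overset{d^0}{\longrightarrow}S^{\otimes 2}\rightarrow\cdots
\]
is exact. We prove this by constructing a contracting homotopy. For $r\geq 1$ define $k^r\colon S^{\otimes r}\to S^{\otimes r-1}$ by
\[
k^r(x_1\otimes\cdots\otimes x_r)=s(x_1)\,x_2\otimes x_3\otimes\cdots\otimes x_r,
\]
and set $k^0\colon S\to R$ equal to $s$. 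A direct verification, expanding $d^{r-1}=\sum_i(-1)^i e_i$ and using that $s$ is a retraction, shows that the intermediate terms telescope, leaving $d^{r-1}\circ k^r+k^{r+1}\circ d^r=\id$ in each degree. The existence of such a homotopy forces every cycle to be a boundary, so the complex is exact, which completes the proof.
\end{proof}
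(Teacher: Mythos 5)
Your proposal is correct and follows essentially the same route as the paper: reduce by faithful flatness to the base-changed map $B\to B\otimes_A B$, which has a section given by multiplication, and then contract the complex with the homotopy that applies the section to the first tensor factor and absorbs it into the second. The only difference is the order of presentation (the paper treats the split case first, then performs the reduction), which is immaterial.
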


\begin{proof}
It is easily checked that $d^{r}\circ d^{r-1}=0$. We assume first that $f$
admits a section, i.e., that there is a homomorphism $g\colon B\rightarrow A$
such that $g\circ f=1$, and we construct a contracting homotopy $k_{r}\colon
B^{\otimes r+2}\rightarrow B^{\otimes r+1}$. Define%
\[
k_{r}(b_{0}\otimes\cdots\otimes b_{r+1})=g(b_{0})b_{1}\otimes\cdots\otimes
b_{r+1},\qquad r\geq-1.
\]
It is easily checked that
\[
k_{r+1}\circ d^{r+1}+d^{r}\circ k_{r}=1,\quad r\geq-1\text{,}%
\]
and this shows that the sequence is exact.

Now let $A^{\prime}$ be an $A$-algebra. Let $B^{\prime}=A^{\prime}\otimes
_{A}B$ and let $f^{\prime}=1\otimes f\colon A^{\prime}\rightarrow B^{\prime}$.
The sequence corresponding to $f^{\prime}$ is obtained from the sequence for
$f$ by tensoring with $A^{\prime}$ (because $B^{\otimes r}\otimes A^{\prime
}\cong B^{\prime\otimes r}$ etc.). Thus, if $A^{\prime}$ is a faithfully flat
$A$-algebra, it suffices to prove the theorem for $f^{\prime}$. Take
$A^{\prime}=B$, and then $b\overset{f}{\mapsto}b\otimes1\colon B\rightarrow
B\otimes_{A}B$ has a section, namely, $g(b\otimes b^{\prime})=bb^{\prime}$,
and so the sequence is exact.
\end{proof}

\begin{theorem}
\label{dt23}If $f\colon A\rightarrow B$ is faithfully flat and $M$ is an
$A$-module, then the sequence%
\[
0\rightarrow M\overset{1\otimes f}{\longrightarrow}M\otimes_{A}%
B\overset{1\otimes d^{0}}{\longrightarrow}M\otimes_{A}B^{\otimes2}%
\rightarrow\cdots\rightarrow M\otimes_{B}B^{\otimes r}\overset{1\otimes
d^{r-1}}{\longrightarrow}B^{\otimes r+1}\rightarrow\cdots
\]
is exact.
\end{theorem}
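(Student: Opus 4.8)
The plan is to imitate the proof of Theorem \ref{dt22} essentially verbatim, the only new feature being that we carry the extra tensor factor $M$ along throughout. First I would dispose of the case in which $f$ admits a section $g\colon B\rightarrow A$ (a ring homomorphism with $g\circ f=1$). The key observation is that the contracting homotopy constructed in the proof of \ref{dt22}, namely $k_{r}(b_{0}\otimes\cdots\otimes b_{r+1})=g(b_{0})b_{1}\otimes\cdots\otimes b_{r+1}$, is a family of \emph{$A$-linear} maps: the section $g$ is $A$-linear because $g(f(a)b)=a\,g(b)$, and the differentials $d^{r}$ are visibly $A$-linear, so the relation $k_{r+1}\circ d^{r+1}+d^{r}\circ k_{r}=1$ is an identity between morphisms of $A$-modules. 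Applying the functor $M\otimes_{A}-$ to this identity gives $(1\otimes k_{r+1})\circ(1\otimes d^{r+1})+(1\otimes d^{r})\circ(1\otimes k_{r})=1$, so the maps $1\otimes k_{r}$ form a contracting homotopy for the sequence of the theorem. Hence that sequence is (split) exact whenever $f$ has a section.

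For the general case I would reduce to the previous one by faithfully flat base change, exactly as in \ref{dt22}. Take $A^{\prime}=B$, which is faithfully flat over $A$, and set $B^{\prime}=A^{\prime}\otimes_{A}B$, $M^{\prime}=A^{\prime}\otimes_{A}M$, and $f^{\prime}=1\otimes f\colon A^{\prime}\rightarrow B^{\prime}$; then $f^{\prime}$ admits the section $b\otimes b^{\prime}\mapsto bb^{\prime}$. Using the standard isomorphisms $(A^{\prime}\otimes_{A}B)^{\otimes_{A^{\prime}}r}\cong A^{\prime}\otimes_{A}B^{\otimes_{A}r}$ and $(A^{\prime}\otimes_{A}M)\otimes_{A^{\prime}}(A^{\prime}\otimes_{A}N)\cong A^{\prime}\otimes_{A}(M\otimes_{A}N)$, one checks that the sequence attached to the pair $(f^{\prime},M^{\prime})$ is canonically the result of applying $A^{\prime}\otimes_{A}-$ to the sequence attached to $(f,M)$. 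By the first paragraph the former sequence is exact, and since $A^{\prime}=B$ is faithfully flat over $A$, the exactness of $A^{\prime}\otimes_{A}(\text{our sequence})$ forces exactness of our sequence.

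The one point that genuinely differs from \ref{dt22}, and the place deserving care, is that tensoring with the \emph{arbitrary} module $M$ is harmless here not because $M$ is flat (it need not be), but because in the split case the complex is contractible, and contractibility survives any additive base change. Faithful flatness is invoked only for the algebra $A^{\prime}=B$, and only to descend exactness in the final step. So I expect the main obstacle to be purely bookkeeping: verifying the compatibility that forming the Amitsur-type complex commutes with extension of the base ring, i.e.\ that $A^{\prime}\otimes_{A}-$ really converts the $(f,M)$-sequence into the $(f^{\prime},M^{\prime})$-sequence. Once that compatibility is established, the two ingredients — the contracting homotopy in the split case and faithful flatness to descend — combine at once to give the result.
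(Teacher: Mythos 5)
Your proposal is correct and follows exactly the route the paper intends: its proof of Theorem \ref{dt23} simply says to assume $f$ has a section and build a contracting homotopy as in Theorem \ref{dt22}, and you have filled in precisely the two points that sketch leaves implicit (the $A$-linearity of the homotopy, so that $M\otimes_{A}-$ preserves the homotopy identity, and the compatibility of the complex with the faithfully flat base change $A'=B$). No gaps.
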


\begin{proof}
As in the above proof, one may assume that $f$ has a section, and use it to
construct a contracting homotopy.
\end{proof}

\begin{remark}
\label{dt23a}Let $f\colon A\rightarrow B$ be a faithfully flat homomorphism,
and let $M$ be an $A$-module. Write $M^{\prime}$ for the $B$-module $f_{\ast
}M=B\otimes_{A}M$. The module $e_{0\ast}M^{\prime}=(B\otimes_{A}B)\otimes
_{B}M^{\prime}$ may be identified with $B\otimes_{A}M^{\prime}$, where
$B\otimes_{A}B$ acts by $(b_{1}\otimes b_{2})(b\otimes m)=b_{1}b\otimes
b_{2}m$, and $e_{1\ast}M^{\prime}$ may be identified with $M^{\prime}%
\otimes_{A}B$, where $B\otimes_{A}B$ acts by $(b_{1}\otimes b_{2})(m\otimes
b)=b_{1}m\otimes b_{2}b$. There is a canonical isomorphism $\phi\colon
e_{1\ast}M^{\prime}\rightarrow e_{0\ast}M^{\prime}$ arising from%
\[
e_{1\ast}M^{\prime}=(e_{1}f)_{\ast}M=(e_{0}f)_{\ast}M=e_{0\ast}M^{\prime};
\]
explicitly, it is the map%
\[
(b\otimes m)\otimes b^{\prime}\mapsto b\otimes(b^{\prime}\otimes m)\colon
M^{\prime}\otimes_{A}B\rightarrow B\otimes_{A}M^{\prime}.
\]
Moreover, $M$ can be recovered from the pair $(M^{\prime},\phi)$ because%
\[
M=\{m\in M^{\prime}\mid1\otimes m=\phi(m\otimes1)\}.
\]
Conversely, every pair $(M^{\prime},\phi)$ satisfying certain obvious
conditions does arise in this way from an $A$-module. Given $\phi\colon
M^{\prime}\otimes_{A}B\rightarrow B\otimes_{A}M^{\prime}$, define%
\begin{align*}
\phi_{1}\colon B\otimes_{A}M^{\prime}\otimes_{A}B  &  \rightarrow B\otimes
_{A}B\otimes_{A}M^{\prime}\\
\phi_{2}\colon M^{\prime}\otimes_{A}B\otimes_{A}B  &  \rightarrow B\otimes
_{A}B\otimes_{A}M^{\prime},\\
\phi_{3}\colon M^{\prime}\otimes_{A}B\otimes_{A}B  &  \rightarrow B\otimes
_{A}M^{\prime}\otimes_{A}B
\end{align*}
by tensoring $\phi$ with $\id_{B}$ in the first, second, and third positions
respectively. Then a pair $(M^{\prime},\phi)$ arises from an $A$-module $M$ as
above if and only if $\phi_{2}=\phi_{1}\circ\phi_{3}$. The necessity is easy
to check. For the sufficiency, define%
\[
M=\{m\in M^{\prime}\mid1\otimes m=\phi(m\otimes1)\}.
\]
There is a canonical map $b\otimes m\mapsto bm\colon B\otimes_{A}M\rightarrow
M^{\prime}$, and it suffices to show that this is an isomorphism (and that the
map arising from $M$ is $\phi$). Consider the diagram
\[
\begin{tikzcd}[column sep=large]
M^{\prime}\otimes_{A}B\arrow{d}{\phi}
\arrow[r, shift left=0.6ex,"\alpha\otimes\id_B"]
\arrow[r,shift right=0.6ex,"\beta\otimes\id_B"']
& B\otimes_{A}M^{\prime}\otimes_{A}B\arrow{d}{\phi_1}\\
B\otimes_{A}M^{\prime}
\arrow[r, shift left=0.6ex,"e_0\otimes\id_{M^{\prime}}"]
\arrow[r,shift right=0.6ex,"e_1\otimes\id_{M^{\prime}}"']
& B\otimes_{A}B\otimes_{A}M^{\prime}%
\end{tikzcd}
\]
in which $\alpha(m)=1\otimes m$ and $\beta(m)=\phi(m)\otimes1$. As the diagram
commutes with either the upper of the lower horizontal maps (for the lower
maps, this uses the relation $\phi_{2}=\phi_{1}\circ\phi_{3}$), $\phi$ induces
an isomorphism on the kernels. But, by definition of $M$, the kernel of the
pair $(\alpha\otimes1,\beta\otimes1)$ is $M\otimes_{A}B$, and, according to
(\ref{dt23}), the kernel of the pair $(e_{0}\otimes1,e_{1}\otimes1)$ is
$M^{\prime}$. This completes the proof.
\end{remark}

\begin{theorem}
\label{dt23c}Let $f\colon A\rightarrow B$ be a faithfully flat homomorphism.
Let $R$ be a $B$-algebra and $\phi\colon R\otimes_{A}B\rightarrow B\otimes
_{A}R$ a homomorphism of $B$-algebras. There exists an $A$-algebra $R_{0}$ and
an isomorphism $\varphi\colon B\otimes_{A}R_{0}\rightarrow R$ such that
$\phi=(\id_{B}\otimes\varphi)\circ(\varphi\otimes\id_{B})^{-1}$ if and only if
(with the above notation)%
\[
\phi_{2}=\phi_{1}\circ\phi_{3}.
\]
Moreover, when this is so, the pair $(R,\varphi)$ is unique up to a unique
isomorphism, and $R_{0}$ is finitely generated if $R$ is finitely generated.
\end{theorem}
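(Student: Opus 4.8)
The plan is to reduce the statement to the module-level faithfully flat descent already recorded in Remark~\ref{dt23a}, and then upgrade it from modules to algebras. The necessity of the cocycle condition is immediate: if a pair $(R_{0},\varphi)$ exists with $\phi=(\id_{B}\otimes\varphi)\circ(\varphi\otimes\id_{B})^{-1}$, then $\phi$ is identified with the canonical descent isomorphism $(b\otimes r)\otimes b'\mapsto b\otimes(b'\otimes r)$ attached to the $A$-algebra $R_{0}$, and this map satisfies $\phi_{2}=\phi_{1}\circ\phi_{3}$ exactly by the computation indicated after \ref{dt23a}.

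For the converse, suppose $\phi_{2}=\phi_{1}\circ\phi_{3}$. This is precisely the cocycle condition of \ref{dt23a}; the usual normalization (obtained by pulling the identity back along the multiplication map $B\otimes_{A}B\to B$) forces $\phi$ to be an isomorphism, so the module descent of \ref{dt23a} applies to the underlying $B$-module of $R$. It produces the $A$-submodule
\[
R_{0}=\{r\in R\mid 1\otimes r=\phi(r\otimes1)\}
\]
together with an isomorphism of $B$-modules $\varphi\colon B\otimes_{A}R_{0}\to R$, $b\otimes r\mapsto br$, for which $\phi=(\id_{B}\otimes\varphi)\circ(\varphi\otimes\id_{B})^{-1}$.

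It then remains to check that this module-theoretic $R_{0}$ is an $A$-subalgebra and that $\varphi$ is an algebra isomorphism. Here I would use that $\phi$ is a homomorphism of $B$-algebras: the defining relation $1\otimes r=\phi(r\otimes1)$ is preserved under addition and multiplication and holds for $r=1_{R}$, and it holds for the image of every $a\in A$ (since $a$ passes across $\otimes_{A}$ and $\phi$ is $B$-linear and unital); hence $R_{0}$ is a subring of $R$ containing the image of $A$. The map $\varphi$ is then visibly a homomorphism of $B$-algebras, and being already a bijection it is an isomorphism of $B$-algebras. Uniqueness follows at once: any admissible $R_{0}$ must coincide with the explicit fixed subset above, so $(R_{0},\varphi)$ is unique up to a unique isomorphism.

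The step I expect to require the most care is the finiteness assertion, since it is the one genuinely new input beyond \ref{dt23a}. Suppose $R$ is generated as a $B$-algebra by $y_{1},\dots,y_{n}$. Expressing each $y_{j}$ under the isomorphism $B\otimes_{A}R_{0}\cong R$ as a finite sum $\sum_{k}b_{jk}\otimes r_{jk}$ with $r_{jk}\in R_{0}$, let $R_{0}'\subset R_{0}$ be the $A$-subalgebra generated by the finitely many elements $r_{jk}$. Then $B\otimes_{A}R_{0}'\to R$ contains every $y_{j}$ in its image and is therefore surjective, so $B\otimes_{A}R_{0}'\to B\otimes_{A}R_{0}$ is surjective; since $A\to B$ is faithfully flat, the cokernel of the inclusion $R_{0}'\hookrightarrow R_{0}$ vanishes after tensoring with $B$ and hence vanishes, so $R_{0}'=R_{0}$. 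Thus $R_{0}$ is finitely generated over $A$, completing the proof.
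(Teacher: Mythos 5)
Your overall route is the same as the paper's: the paper's entire proof of \ref{dt23c} is the remark that the module-level argument of \ref{dt23a} ``applies to algebras'', and you carry that out, adding the two verifications the paper leaves unsaid, namely that the fixed set $R_{0}$ is an $A$-subalgebra because $\phi$ is a unital ring homomorphism, and the finite-generation clause. Your finite-generation argument --- generate $R_{0}'$ by the tensor components of algebra generators of $R$, observe that $B\otimes_{A}R_{0}'\rightarrow B\otimes_{A}R_{0}$ is surjective, and kill the cokernel by faithful flatness --- is correct and is genuinely new content relative to the paper, which offers no argument for that assertion.

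One claim in your converse direction is false, however: the cocycle condition $\phi_{2}=\phi_{1}\circ\phi_{3}$ does not force $\phi$ to be an isomorphism, even for unital homomorphisms of $B$-algebras. Pulling the cocycle identity back along the multiplication $B\otimes_{A}B\rightarrow B$ only yields that the induced endomorphism $\lambda$ of $R$ satisfies $\lambda\circ\lambda=\lambda$, i.e., is idempotent, which gives $\lambda=\id$ only when $\lambda$ is already injective. Concretely, take $A=B=k$ and $R=k[x]$ with $\phi(f)=f(0)$: this is a unital $k$-algebra endomorphism with $\phi\circ\phi=\phi$, so the cocycle condition holds, yet no pair $(R_{0},\varphi)$ can produce it, since any map of the form $(\id_{B}\otimes\varphi)\circ(\varphi\otimes\id_{B})^{-1}$ is an isomorphism. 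The repair is simply to read the hypothesis as the paper intends (and as \ref{dt23a} and \ref{dt24} make explicit): $\phi$ is an \emph{isomorphism} of $B\otimes_{A}B$-algebras. With that hypothesis restored, the rest of your argument --- including the appeal to \ref{dt23a}, whose equalizer comparison itself uses the invertibility of $\phi$ --- goes through.
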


\begin{proof}
When $M$ is a $B$-module, we proved this in \ref{dt23a}. The same argument
applies to algebras.
\end{proof}

A morphism $p\colon W\rightarrow V$ of schemes is \emph{faithfully flat} if it
is surjective on the underlying sets and $\mathcal{O}{}_{\varphi
(P)}\rightarrow\mathcal{O}{}_{P}$ is flat for all $P\in W$.

\begin{theorem}
\label{dt24}Let $p\colon W\rightarrow V$ be a faithfully flat map of schemes.
Let $U$ be a scheme quasi-projective over $W$ and $\phi\colon\pr_{1}^{\ast
}U\rightarrow\pr_{2}^{\ast}U$ an isomorphism of $W\times_{V}W$-schemes. There
exists a scheme $U_{0}$ over $V$ and an isomorphism $\varphi_{0}\colon
p^{\ast}U_{0}\rightarrow U$ such that $\phi=\pr_{2}^{\ast}(\varphi_{0}%
)\circ\pr_{1}^{\ast}(\varphi_{0})^{-1}$ if and only if
\[
\pr_{31}^{\ast}(\phi)=\pr_{32}^{\ast}(\phi)\circ\pr_{21}^{\ast}(\phi).
\]
Moreover, when this is so, the pair $(V,\varphi_{0})$ is unique up to a unique
isomorphism, and $V_{0}$ is quasi-projective.
\end{theorem}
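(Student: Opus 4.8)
The plan is to dispose of these formal parts first. If $(U_{0},\varphi_{0})$ exists, then $\phi=\pr_{2}^{\ast}(\varphi_{0})\circ\pr_{1}^{\ast}(\varphi_{0})^{-1}$, and pulling this equality back along the three projections $W\times_{V}W\times_{V}W\rightarrow W\times_{V}W$ — using that their composites with $p$ all coincide, so that $\pr_{ij}^{\ast}p^{\ast}U_{0}$ is canonically a single object — yields $\pr_{31}^{\ast}\phi=\pr_{32}^{\ast}\phi\circ\pr_{21}^{\ast}\phi$ by direct substitution. For uniqueness, two solutions $(U_{0},\varphi_{0})$ and $(U_{0}',\varphi_{0}')$ produce an isomorphism $p^{\ast}U_{0}\cong U\cong p^{\ast}U_{0}'$ over $W$ intertwining the two canonical descent data; this isomorphism carries its own descent datum over $W\times_{V}W$ and so descends to a unique $V$-isomorphism $U_{0}\cong U_{0}'$ by faithfully flat descent of morphisms, which is the quasi-coherent form of Theorem \ref{dt23}.

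\textbf{The affine case as engine.} The whole argument rests on the affine case, already available as Theorem \ref{dt23c}. When $U\rightarrow W$ is affine I would write $U=\Spec_{W}\mathcal{R}$ for a quasi-coherent $\mathcal{O}_{W}$-algebra $\mathcal{R}$; then $\phi$ is the same datum as an algebra descent datum on $\mathcal{R}$, and the cocycle condition translates verbatim into the hypothesis $\phi_{2}=\phi_{1}\circ\phi_{3}$ of \ref{dt23c}. Applying \ref{dt23c} (affine-locally on $V$, then gluing by the uniqueness just established) gives a quasi-coherent $\mathcal{O}_{V}$-algebra $\mathcal{R}_{0}$ with $p^{\ast}\mathcal{R}_{0}\cong\mathcal{R}$, and $U_{0}=\Spec_{V}\mathcal{R}_{0}$ solves the problem; the finiteness clause is the last sentence of \ref{dt23c}.

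\textbf{Reduction of the quasi-projective case.} By uniqueness the construction is local on $V$, so I would assume $V=\Spec A$ and choose a relatively ample invertible sheaf $\mathcal{L}$ on $U\rightarrow W$. The plan is then to present $U$ through the graded $\mathcal{O}_{W}$-algebra $\mathcal{R}=\bigoplus_{d\geq 0}(p_{U})_{\ast}\mathcal{L}^{\otimes d}$, to descend $\mathcal{R}$ degree by degree via \ref{dt23c}, and to recover $U_{0}$ as the relevant open subscheme of $\Proj_{V}$ of the descended graded algebra. Each graded piece is quasi-coherent and descends once $\mathcal{L}$ is equipped with a descent datum $\psi$ refining $\phi$; the cocycle condition on $\phi$ then propagates to $\mathcal{R}$ as exactly the condition $\phi_{2}=\phi_{1}\circ\phi_{3}$ required to invoke \ref{dt23c}, and $\Proj$ of the result is quasi-projective over $V$.

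\textbf{The main obstacle.} The one genuinely new point — and the only place quasi-projectivity enters, rather than an arbitrary $U$ — is the existence of $\psi$, i.e.\ of a $\phi$-compatible descent datum on a relatively ample sheaf. An arbitrary $\mathcal{L}$ satisfies no relation between $\pr_{1}^{\ast}\mathcal{L}$ and $\phi^{\ast}\pr_{2}^{\ast}\mathcal{L}$, so this must be forced; that it can fail for non-quasi-projective $U$ is precisely what the necessity of the hypothesis in Theorem \ref{dt30} reflects, and I expect it to be the hard part. The resolution I would pursue is to observe that the discrepancy $\phi^{\ast}\pr_{2}^{\ast}\mathcal{L}\otimes(\pr_{1}^{\ast}\mathcal{L})^{-1}$ is, by the cocycle condition, a $1$-cocycle for the descent of $\Pic$; its class twists $\mathbb{P}_{W}^{n}$ by a $\PGL$-valued cocycle whose associated Azumaya algebra is a coherent object and therefore descends by \ref{dt23c}. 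The descended algebra produces a Brauer--Severi scheme $P_{0}$ over $V$, quasi-projective and restricting over $W$ to the relevant twist of $\mathbb{P}^{n}$, into which $U_{0}$ embeds; the previous paragraph then applies. The final bookkeeping I would be most careful about is checking that $\Proj$ of the descended algebra recovers $U$ together with $\phi$ on the nose, and not merely some inner form of it.
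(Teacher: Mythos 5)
Your treatment of necessity, uniqueness, and the affine case is correct and coincides with the paper: the affine case is exactly Theorem \ref{dt23c}, and the paper's entire proof consists of that reduction together with the sentence that the extension to the general case is omitted. So on everything the paper actually proves, you agree with it; the question is whether your sketch of the omitted quasi-projective case holds up.

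It does not, as written. You correctly isolate the crux --- producing a $\phi$-compatible descent datum on a relatively ample sheaf $\mathcal{L}$ --- but the proposed resolution has a gap at its first step. The discrepancy $\phi^{\ast}\pr_{2}^{\ast}\mathcal{L}\otimes(\pr_{1}^{\ast}\mathcal{L})^{-1}$ is an invertible sheaf on $\pr_{1}^{\ast}U$, a scheme over $W\times_{V}W$, not on $W\times_{V}W$ itself; it is pulled back from the base only under extra hypotheses (e.g.\ proper, geometrically connected fibres with $\pi_{\ast}\mathcal{O}_{U}=\mathcal{O}_{W}$ for the structure map $\pi\colon U\rightarrow W$), which a general quasi-projective $U$ need not satisfy. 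Without that there is no $\Pic$-valued cocycle on the base, hence no $\mathbb{G}_{m}$-gerbe on $V$, no $\PGL$-cocycle, and no Azumaya algebra to which \ref{dt23c} could be applied; and even where the gerbe does live on $V$, representing its class by an Azumaya algebra and showing that $\pi_{\ast}\mathcal{L}^{\otimes d}$ is locally free of constant rank are substantive steps that you assert rather than prove. The classical arguments instead either assume the descent datum is given on the pair $(U,\mathcal{L})$, or manufacture a compatible datum on an ample sheaf by a norm (tensor product over conjugates), which needs $p$ finite locally free or $V$ the spectrum of a field --- and these are precisely the only cases the paper uses (Theorems \ref{dt25a}, \ref{dt25b}, Example \ref{dt32}). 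Indeed, in the literal generality of the statement the claim is delicate: Raynaud's examples of fppf torsors under abelian schemes over non-normal bases that are not schemes give non-effective descent data on projective schemes, so some such additional input is unavoidable. Your plan therefore names the right obstacle, in line with the remark after Theorem \ref{dt30}, but does not close it.
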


\noindent Here $\pr_{1}$ and $\pr_{2}$ denote the projections $W\times
_{V}W\rightarrow W$ and $\pr_{ji}$ denotes the projection $W\times_{V}%
W\times_{V}W\rightarrow W\times_{V}W$ such that $p_{ji}(w_{1},w_{2}%
,w_{3})=(w_{j},w_{i})$.

\begin{proof}
When $U$, $V$, and $W$ are affine, this becomes the statement \ref{dt23c}. We
omit the proof of the extension to the general case.
\end{proof}

\begin{example}
\label{dt32}Let $\Gamma$ be a finite group, viewed as an algebraic group over
$k$ of dimension $0$. Let $V$ be an algebraic scheme over $k$. A scheme
\emph{Galois over }$V$ \emph{with Galois group }$\Gamma$ is a finite morphism
$W\rightarrow V$ of $k$-schemes together with a morphism $W\times
\Gamma\rightarrow W$ such that

\begin{enumerate}
\item for all $k$-algebras $R$, $W(R)\times\Gamma(R)\rightarrow W(R)$ is an
action of the group $\Gamma(R)$ on the set $W(R)$ in the usual sense, and the
map $W(R)\rightarrow V(R)$ is compatible with the action of $\Gamma(R)$ on
$W(R)$ and its trivial action on $V(R)$, and

\item the morphism $(w,\sigma)\mapsto(w,w\sigma)\colon W\times\Gamma
\rightarrow W\times_{V}W$ is an isomorphism.
\end{enumerate}

Then there is a commutative diagram
\[
\begin{tikzcd}
V\arrow[equals]{d}&W\arrow{l}\arrow[equals]{d}
&W\times\Gamma\arrow{d}{\simeq}
\arrow[l, shift left=0.45ex]
\arrow[l,shift right=0.45ex]
&W\times \Gamma\times \Gamma\arrow{d}{\simeq}
\arrow[l, shift left=0.9ex]
\arrow[l]
\arrow[l,shift right=0.9ex]\\
V&W\arrow{l}&W\times_V W
\arrow[l, shift left=0.45ex]
\arrow[l,shift right=0.45ex]
&W\times_V W\times_V W
\arrow[l, shift left=0.9ex]
\arrow[l]
\arrow[l,shift right=0.9ex]
\end{tikzcd}
\]
in which the vertical isomorphisms are%
\begin{align*}
(w,\sigma)  &  \mapsto(w,w\sigma)\\
(w,\sigma_{1},\sigma_{2})  &  \mapsto(w,w\sigma_{1},w\sigma_{1}\sigma_{2}).
\end{align*}
Therefore, in this case, Theorem \ref{dt24} says that to give an algebraic
scheme affine over $V$ is the same as giving an algebraic scheme affine over
$W$ together with an action of $\Gamma$ on it compatible with that on $W$.
When we take $W$ and $V$ to be the spectra of fields, then this becomes the
affine case of Theorem \ref{dt14}.
\end{example}

\subsection{Noncommutative rings}

\begin{definition}
\label{dt41}Let $f\colon A\rightarrow B$ be a homomorphism of rings, not
necessarily commutative, such that $B$ is a faithfully flat as a left
$A$-module. A \emph{descent datum} on a right $B$-module $M$ is a homomorphism
of right $B$-modules $\rho_{M}\colon M\rightarrow M\otimes_{A}B$ such that the
two composed maps
\[
\begin{tikzcd}[column sep=large]
M\arrow{r}{\rho_M}&M\otimes_A B
\arrow[r,shift left=0.6ex,"\rho_M\otimes B"]
\arrow[r,shift right=0.6ex,"m\otimes b\mapsto m\otimes 1_B\otimes b"']
&M\otimes_A B\otimes_A B
\end{tikzcd}
\]
are equal and the map
\[
\begin{tikzcd}[column sep=large]
M\arrow{r}{\rho_Y}&M\otimes_A B\arrow{r}{m\otimes b\mapsto mb}&M
\end{tikzcd}
\]
equals the identity map.
\end{definition}

With the obvious notion of morphism, the pairs $(M,\rho)$ consisting of a
right $B$-module and a descent datum form a category $\mathsf{Desc}(B/A)$.

\begin{theorem}
[Faithfully flat descent]\label{dt42}The functor%
\[
\Phi\colon\Mod_{A}\rightarrow\mathsf{Desc}(B/A),\quad M\rightsquigarrow
(M\otimes_{A}B,\rho_{M}),\quad\rho_{M}(m\otimes b)=m\otimes1\otimes b
\]
is an equivalence of categories.
\end{theorem}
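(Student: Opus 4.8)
The plan is to exhibit an explicit quasi-inverse and check that the unit and counit are isomorphisms. For a descent datum $(N,\rho)$ I set
\[
\Psi(N,\rho)=N^{\rho}:=\{n\in N\mid\rho(n)=n\otimes1\},
\]
the equalizer of $\rho$ and the map $j\colon n\mapsto n\otimes1$. A short check using right $B$-linearity of $\rho$ shows $N^{\rho}$ is stable under the right $A$-action, so $\Psi$ lands in $\Mod_A$, and with the evident effect on morphisms it is a functor. It then remains to produce natural isomorphisms $\Psi\Phi\simeq\id$ and $\Phi\Psi\simeq\id$.

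For $\Psi\Phi\simeq\id$, given a right $A$-module $M$ an element $x=\sum m_i\otimes b_i\in M\otimes_A B$ lies in $\Psi\Phi(M)$ exactly when $\sum m_i\otimes1\otimes b_i=\sum m_i\otimes b_i\otimes1$, i.e.\ when $x\in\ker(1\otimes d^{0})$ in the notation of Theorem~\ref{dt23}. That theorem (this is where faithful flatness enters) identifies this kernel with the image of $1\otimes f$, namely $M\otimes1\cong M$, and the isomorphism is natural in $M$.

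For $\Phi\Psi\simeq\id$, write $M=N^{\rho}$ and consider the right $B$-module map $\mu\colon M\otimes_A B\to N$, $m\otimes b\mapsto mb$; I claim it is the desired isomorphism. Applying the exact functor $-\otimes_A B$ (here only flatness of $B$ as a left $A$-module is needed) to the defining equalizer $0\to M\to N\xrightarrow{\rho-j}N\otimes_A B$ identifies $M\otimes_A B$ with the equalizer inside $N\otimes_A B$ of $\rho\otimes\id_B$ and $e_0\colon n\otimes b\mapsto n\otimes1\otimes b$. The cocycle axiom of Definition~\ref{dt41} reads $(\rho\otimes\id_B)\circ\rho=e_0\circ\rho$, so $\rho$ factors as $\rho=\iota\circ\bar\rho$ for a unique $\bar\rho\colon N\to M\otimes_A B$, with $\iota$ the inclusion. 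The counit axiom $\mu_N\circ\rho=\id_N$, where $\mu_N(n\otimes b)=nb$, gives $\mu\circ\bar\rho=\id_N$; and for $m\in M$ one has $\rho(mb)=\rho(m)\cdot b=(m\otimes1)\cdot b=m\otimes b$, so $\rho\circ\mu=\iota$ and hence $\bar\rho\circ\mu=\id_{M\otimes_A B}$ since $\iota$ is mono. Thus $\mu$ and $\bar\rho$ are mutually inverse, and a one-line computation shows $\mu$ intertwines $\rho_M$ with $\rho$, so it is an isomorphism in $\mathsf{Desc}(B/A)$, natural in $(N,\rho)$.

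The only substantive input is the base-change identity above—that forming $\rho$-invariants commutes with the flat base change $-\otimes_A B$—together with the exactness of Theorem~\ref{dt23}; both rest on faithful flatness, and this is where I expect the real content to sit. Everything else is a formal consequence of the counit and cocycle axioms, exactly in the spirit of Remark~\ref{dt23a}. The remaining bookkeeping (functoriality of $\Psi$, naturality of the two isomorphisms, and compatibility of $\mu$ with the descent data) is routine.
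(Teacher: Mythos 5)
Your proof is correct, but it follows a genuinely different route from the paper's. The paper reduces Theorem \ref{dt42} to Lemma \ref{dt43} and then simply cites the comonadicity theorem (\cite{deligne1990}, Proposition 4.4); you instead give a direct argument, essentially the module-theoretic analogue of the computation the paper performs in Remark \ref{dt23a} for commutative rings. Both halves check out: for the unit, $(M\otimes_{A}B)^{\rho_{M}}=\Ker(1\otimes d^{0})$, which the exactness of the Amitsur sequence identifies with $M\otimes1$ --- this is where faithful flatness is indispensable; for the counit, flatness alone identifies $N^{\rho}\otimes_{A}B$ with the equalizer of $\rho\otimes\id_{B}$ and $e_{0}$ inside $N\otimes_{A}B$, the cocycle axiom factors $\rho$ through it as $\bar{\rho}$, and the counit axiom together with $\rho(mb)=m\otimes b$ for $m\in N^{\rho}$ makes $\mu$ and $\bar{\rho}$ mutually inverse. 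What your approach buys is a self-contained, elementary proof that also isolates exactly where faithfulness, as opposed to mere flatness, enters (namely in full faithfulness, not in effectivity).

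One loose end: Theorem \ref{dt23}, which you invoke, is stated and proved in the commutative part of the paper --- the proof of Theorem \ref{dt22} uses the identification $B^{\otimes r}\otimes_{A}A^{\prime}\cong B^{\prime\otimes r}$ and the section $b\otimes b^{\prime}\mapsto bb^{\prime}$, both written under commutative hypotheses. The statement you actually need, exactness of $0\rightarrow M\rightarrow M\otimes_{A}B\rightarrow M\otimes_{A}B\otimes_{A}B$ for a right $A$-module $M$ and $B$ faithfully flat as a left $A$-module, is still true and follows by the same device: tensor the sequence on the right with $B$ over $A$ and contract using the maps $m\otimes b_{1}\otimes\cdots\otimes b_{r}\otimes c\mapsto m\otimes b_{1}\otimes\cdots\otimes b_{r}c$. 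You should spell this out (a few lines) rather than citing \ref{dt23} as it stands; with that addition the proof is complete.
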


This follows from the next more precise statement.

\begin{lemma}
\label{dt43}Let $(N,\rho_{N})$ be a right $B$-module equipped with a descent
datum. Then
\[
N^{\prime}\overset{\df}{=}\{y\in N\mid\rho_{N}(n)=n\otimes1\}
\]
is an $A$-submodule of $N$ such that
\[
N^{\prime}\otimes_{A}B\simeq N.
\]

\end{lemma}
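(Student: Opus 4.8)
The plan is to prove the lemma by showing that the natural right $B$-module homomorphism
\[
\mu\colon N^{\prime}\otimes_{A}B\rightarrow N,\qquad n^{\prime}\otimes b\mapsto n^{\prime}b,
\]
is an isomorphism; this is precisely the assertion $N^{\prime}\otimes_{A}B\simeq N$, and it supplies the essential surjectivity needed for the equivalence in Theorem \ref{dt42}. First observe that $N^{\prime}$ is the equalizer of the two $A$-linear maps $\rho_{N}$ and $\iota\colon n\mapsto n\otimes 1$ from $N$ to $N\otimes_{A}B$ (the map $\rho_{N}$ is $A$-linear because it is $B$-linear, and $\iota$ is $A$-linear); in particular $N^{\prime}$ is an $A$-submodule of $N$, and the sequence $0\rightarrow N^{\prime}\rightarrow N\xrightarrow{\rho_{N}-\iota}N\otimes_{A}B$ is exact.

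For injectivity I would argue directly, without the cocycle condition. Since $B$ is flat as a left $A$-module, tensoring the displayed left-exact sequence with $B$ shows that the base change $j\colon N^{\prime}\otimes_{A}B\rightarrow N\otimes_{A}B$ of the inclusion is injective. If $\xi=\sum n_{i}^{\prime}\otimes b_{i}$ satisfies $\mu(\xi)=\sum n_{i}^{\prime}b_{i}=0$, then applying $\rho_{N}$ and using that it is right $B$-linear together with $\rho_{N}(n_{i}^{\prime})=n_{i}^{\prime}\otimes 1$ gives
\[
0=\rho_{N}\Big(\sum n_{i}^{\prime}b_{i}\Big)=\sum(n_{i}^{\prime}\otimes 1)b_{i}=\sum n_{i}^{\prime}\otimes b_{i}=j(\xi),
\]
so $\xi=0$ and $\mu$ is injective.

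Surjectivity is the main obstacle, and here the counit and cocycle conditions and faithful flatness all enter. I would follow the method of Theorem \ref{dt22}. Because $B$ is faithfully flat over $A$, the functor $-\otimes_{A}B$ is exact and faithful, so $\mu$ is surjective if and only if $\mu\otimes_{A}B$ is; it therefore suffices to prove surjectivity after base change along $f$. Over $A^{\prime}=B$ the map $f$ becomes $f^{\prime}\colon B\rightarrow B\otimes_{A}B$, $b\mapsto b\otimes 1$, which admits the section $g(b\otimes b^{\prime})=bb^{\prime}$; the base-changed map is the analogue of $\mu$ for the induced descent datum, and, exactly as the section built a contracting homotopy in the proof of Theorem \ref{dt22}, the counit condition together with $g$ produces an explicit right inverse, so $\mu\otimes_{A}B$, and hence $\mu$, is onto. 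Alternatively one can imitate Remark \ref{dt23a}: compare the flat base change $0\rightarrow N^{\prime}\otimes_{A}B\rightarrow N\otimes_{A}B\xrightarrow{(\rho_{N}-\iota)\otimes\id_{B}}N\otimes_{A}B\otimes_{A}B$ of the defining sequence with the Amitsur sequence $0\rightarrow N\xrightarrow{\iota}N\otimes_{A}B\xrightarrow{1\otimes d^{0}}N\otimes_{A}B\otimes_{A}B$ furnished by Theorem \ref{dt23} for the $A$-module $N$ (whose section-based proof is insensitive to noncommutativity), via a comparison map $\Phi$ on the middle terms built from $\rho_{N}$; the cocycle condition makes $\Phi$ intertwine the two differentials, so $\Phi$ restricts to an isomorphism on kernels, and one checks the restriction is $\mu$.

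The step I expect to cost the most is precisely this surjectivity, and within it the left/right $A$-module bookkeeping forced by noncommutativity. In the split trivial case the comparison $\Phi$ of the second route is just the flip of the two copies of $B$, but this naive flip is not well defined over a noncommutative $A$, so checking that $\Phi$ is a well-defined isomorphism intertwining the differentials is genuinely delicate. For that reason I would make the first route --- reduction to a split $f$ followed by an explicit inverse from the section --- the primary argument, treating Remark \ref{dt23a} only as motivation; injectivity is then free, and all the real content lies in constructing the splitting after passing to $B\otimes_{A}B$.
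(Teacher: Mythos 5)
The paper itself gives no direct argument for this lemma: it simply invokes the comonadicity theorem via the reference to Deligne. Your decision to argue directly is therefore a genuinely different route, and the easy parts of your proposal are correct and complete: $N^{\prime}$ is the equalizer of the two right $A$-linear maps $\rho_{N}$ and $\iota\colon n\mapsto n\otimes1$, hence an $A$-submodule, and your injectivity argument (flatness makes $j\colon N^{\prime}\otimes_{A}B\rightarrow N\otimes_{A}B$ injective, while $\rho_{N}\circ\mu=j$ because $\rho_{N}$ is right $B$-linear and restricts to $\iota$ on $N^{\prime}$) is clean. The gap is in surjectivity, and precisely in the route you elect to make primary. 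The lemma lives in the noncommutative setting of Definition \ref{dt41}, and there $B\otimes_{A}B$ is only a $B$-bimodule (the Sweedler coring), not a ring: the would-be multiplication $(b_{1}\otimes b_{1}^{\prime})(b_{2}\otimes b_{2}^{\prime})=b_{1}b_{2}\otimes b_{1}^{\prime}b_{2}^{\prime}$ is not well defined over a noncommutative $A$, and $g(b\otimes b^{\prime})=bb^{\prime}$ is not a ring homomorphism. Consequently \textquotedblleft base change along $f$ to the split extension $B\rightarrow B\otimes_{A}B$\textquotedblright\ does not produce a descent situation over a ring map to which the contracting-homotopy argument of Theorem \ref{dt22} could be applied; that reduction is sound for commutative rings but is exactly what noncommutativity destroys. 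Your fallback via the comparison map of Remark \ref{dt23a} founders on the same point, as you yourself observe, and is left unresolved.

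The irony is that surjectivity is the cheap half and needs neither faithfulness nor any base change. Tensoring the left-exact sequence $0\rightarrow N^{\prime}\rightarrow N\rightarrow N\otimes_{A}B$ with $B$ identifies the image of $j$ with the equalizer of the two maps $\rho_{N}\otimes\id_{B}$ and $n\otimes b\mapsto n\otimes1\otimes b$ from $N\otimes_{A}B$ to $N\otimes_{A}B\otimes_{A}B$ (note that $\iota\otimes\id_{B}$ is the second of these). The first condition in Definition \ref{dt41} says precisely that $\rho_{N}(n)$ is equalized by this pair for every $n\in N$, so $\rho_{N}(n)=j(\xi)$ for a unique $\xi\in N^{\prime}\otimes_{A}B$; the second condition then gives $\mu(\xi)=\epsilon(j(\xi))=\epsilon(\rho_{N}(n))=n$, where $\epsilon(n\otimes b)=nb$, so $\mu$ is onto. (Categorically: the counit exhibits $\rho_{N}$ as a split equalizer of that pair, while $N^{\prime}\otimes_{A}B$ is its equalizer by flatness, whence $\mu$ is an isomorphism.) Replacing your surjectivity argument by this closes the proof and is simpler than the machinery you were bracing for.
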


\begin{proof}
This follows from the comonadicity theorem in category theory. See, for
example, \cite{deligne1990}, Proposition 4.4.
\end{proof}

When the rings are commutative, it is possible to show that descent data in
the above sense correspond to descent data in the commutative sense. This
gives a different approach to faithfully flat descent for commutative rings,
which, however, is not simpler than the direct approach (\ref{dt23a}).

\section{Weil's descent theorems}

In this section, $\Omega$ is an algebraically closed field containing the
field $k$. We let $k^{\mathrm{sep}}$ denote the separable closure of $k$ in
$\Omega$. The next statement is essentially Theorem 3 of \cite{weil1956}.

\begin{theorem}
\label{dt25a}Let $K$ be a finite separable extension of $k$, and let $I$ be
the set of $k$-homo\-morphisms $K\rightarrow\Omega$. Let $V$ be a
quasi-projective algebraic scheme over $K$, and for each pair $(\sigma,\tau)$
of elements of $I$, let $\phi_{\tau,\sigma}$ be an isomorphism $\sigma
V\rightarrow\tau V$ (of algebraic schemes over $\Omega$). Then there exists an
algebraic scheme $V_{0}$ over $k$ and an isomorphism $\phi\colon
V_{0K}\rightarrow V$ such that $\phi_{\tau,\sigma}=\tau\phi\circ(\sigma
\phi)^{-1}$ for all $\sigma,\tau\in I$ if and only if the $\phi_{\tau,\sigma}$
are defined over $k^{\mathrm{sep}}$ and satisfy the following conditions,

\begin{enumerate}
\item $\phi_{\tau,\rho}=\phi_{\tau,\sigma}\circ\phi_{\sigma,\rho}$ for all
$\rho,\sigma,\tau\in I$;

\item $\phi_{\tau\omega,\sigma\omega}=\omega\phi_{\tau,\sigma}$ for all
$\sigma,\tau\in I$ and all $k_{0}$-automorphisms $\omega$ of $k_{0}%
^{\mathrm{al}}$ over $k_{0}$.
\end{enumerate}

\noindent Moreover, when this is so, the pair $(V_{0},\phi)$ is unique up to
isomorphism over $k_{0}$, and $V_{0}$ is quasi-projective or quasi-affine if
$V$ is.
\end{theorem}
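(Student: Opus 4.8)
The plan is to reduce Weil's theorem to the Galois descent already established in Corollary~\ref{dt16}, by converting the family $(\phi_{\tau,\sigma})$ into an ordinary descent datum over the Galois extension $k^{\mathrm{sep}}/k$. Since $K/k$ is finite separable, every $\sigma\in I$ carries $K$ into $k^{\mathrm{sep}}$, so each $\sigma V$ and, by hypothesis, each $\phi_{\tau,\sigma}$ is already defined over $k^{\mathrm{sep}}$; as base change from $k^{\mathrm{sep}}$ to $\Omega$ is faithful, I may work throughout over $k^{\mathrm{sep}}$, which is Galois over $k$ with group $\Gamma=\Gal(k^{\mathrm{sep}}/k)$ and fixed field $k$ (\ref{dt0}a). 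The group $\Gamma$ acts on $I$ by $\omega\cdot\sigma=\omega\circ\sigma$; this action is transitive, and the stabilizer of a chosen base embedding $\iota\in I$ is $\Gamma_{\iota}=\Gal(k^{\mathrm{sep}}/\iota K)$.

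The key construction is to put a descent datum on the single $k^{\mathrm{sep}}$-scheme $\mathcal{V}:=\iota V$ by setting, for $\omega\in\Gamma$,
\[
\varphi_{\omega}:=\phi_{\iota,\,\omega\iota}\colon \omega\mathcal{V}=(\omega\iota)V\longrightarrow \iota V=\mathcal{V}.
\]
By the equivariance condition (b), $\omega\varphi_{\omega'}=\omega\phi_{\iota,\omega'\iota}=\phi_{\omega\iota,\,\omega\omega'\iota}$, and then the cocycle identity (a) gives $\varphi_{\omega}\circ(\omega\varphi_{\omega'})=\phi_{\iota,\omega\iota}\circ\phi_{\omega\iota,\omega\omega'\iota}=\phi_{\iota,\omega\omega'\iota}=\varphi_{\omega\omega'}$, so $(\varphi_{\omega})_{\omega\in\Gamma}$ is a descent system. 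It is continuous because the finitely many $\phi_{\tau,\sigma}$, together with $V$, are defined over a finite Galois subextension $k'/k$ of $k^{\mathrm{sep}}$, which then splits it (\ref{dt13}, \ref{dt12}). Since $\iota V$ is quasi-projective, Corollary~\ref{dt16} makes the datum effective: there is a model $(V_{0},\varphi)$ of $\mathcal{V}$ over $k$, with $\varphi\colon \mathcal{V}\to V_{0,k^{\mathrm{sep}}}$ and $\varphi_{\omega}=\varphi^{-1}\circ\omega\varphi$.

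It remains to recover the $K$-structure and verify the formula. Taking $\rho=\sigma=\tau=\iota$ in (a) shows $\phi_{\iota,\iota}$ is idempotent, hence $\phi_{\iota,\iota}=\mathrm{id}$; consequently $\varphi_{\omega}=\mathrm{id}$ for every $\omega\in\Gamma_{\iota}$, since then $\omega\iota=\iota$. Thus the restriction of $(\varphi_{\omega})$ to $\Gamma_{\iota}=\Gal(k^{\mathrm{sep}}/\iota K)$ coincides with the canonical datum that descends $\iota V$ to the $K$-scheme $V$; matching this with the canonical datum of the $K$-scheme $V_{0K}$ (whose associated fixed field is likewise $\iota K\cong K$) shows, via the descent of morphisms compatible with the data (\ref{dt5}, \ref{dt14}), that $\varphi^{-1}$ descends to a $K$-isomorphism $\phi\colon V_{0K}\xrightarrow{\sim}V$. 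Writing $\sigma\phi\colon V_{0\Omega}\to\sigma V$ for the base changes, the splitting relation unwinds to $\phi_{\iota,\sigma}=\iota\phi\circ(\sigma\phi)^{-1}$ for all $\sigma$; combining this with $\phi_{\tau,\sigma}=\phi_{\iota,\tau}^{-1}\circ\phi_{\iota,\sigma}$, again from (a), yields the desired $\phi_{\tau,\sigma}=\tau\phi\circ(\sigma\phi)^{-1}$.

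The remaining assertions are routine. Necessity is purely formal: if $(V_{0},\phi)$ exists then each $\phi_{\tau,\sigma}=\tau\phi\circ(\sigma\phi)^{-1}$ is defined over $k^{\mathrm{sep}}$, and (a), (b) follow by cancelling and by applying $\omega$. Uniqueness of $(V_{0},\phi)$ up to a unique $k$-isomorphism is the full faithfulness of the descent functor (\ref{dt10a}, \ref{dt10b}), and the preservation of quasi-projectivity, or quasi-affineness, is part of Corollary~\ref{dt16}. I expect the main obstacle to be the bookkeeping in this last bridging step: making the variance in (a) and (b) match the transitive $\Gamma$-action on $I$, and, above all, upgrading the $k^{\mathrm{sep}}$-isomorphism produced by Galois descent to a genuine $K$-isomorphism $V_{0K}\cong V$. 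The point that makes this work is the triviality $\phi_{\iota,\iota}=\mathrm{id}$, which forces the datum to be canonical on the stabilizer $\Gal(k^{\mathrm{sep}}/\iota K)$ and thereby pins down the $K$-structure.
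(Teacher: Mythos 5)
Your proposal is correct and follows the same route as the paper: fix a base embedding $\iota\in I$, turn the family $(\phi_{\tau,\sigma})$ into a $k^{\mathrm{sep}}/k$-descent datum on $\iota V$ via $\varphi_{\omega}=\phi_{\iota,\omega\iota}$, and invoke Corollary~\ref{dt16}. The paper's own proof is only a two-line sketch, and your write-up usefully supplies the details it elides — the cocycle verification from (a) and (b), continuity, the observation $\phi_{\iota,\iota}=\mathrm{id}$, and the descent of $\varphi^{-1}$ to a $K$-isomorphism via \ref{dt10a}/\ref{dt5} — all of which check out.
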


\begin{proof}
The conditions are obviously necessary. For the converse, fix an embedding
$i\colon K\rightarrow k^{\mathrm{sep}}$. Then the isomorphisms $\phi
_{\sigma,\tau}$ define a descent datum on $iV$, and Corollary \ref{dt16}
provides us with a pair $(V_{0},\phi)$ satisfying the required conditions (and
$(V_{0},\phi)$ is unique up to a \textit{unique }isomorphism over $k_{0}$).
\end{proof}

An extension $K$ of a field $k$ is said to be \emph{regular }if it is finitely
generated, admits a separating transcendence basis, and $k$ is algebraically
closed in $K$. These are precisely the fields that arise as the field of
rational functions on a geometrically irreducible algebraic variety over $k.$

Let $k$ be a field, and let $k(t)$, $t=(t_{1},\ldots,t_{n}),$ be a regular
extension of $k$ (in Weil's terminology, $t$ is a \textit{generic point} of an
algebraic variety over $k$). By $k(t^{\prime})$ we shall mean a field
isomorphic to $k(t)$ by $t\mapsto t^{\prime}$, and we write $k(t,t^{\prime})$
for the field of fractions of $k(t)\otimes_{k}k(t^{\prime})$.\footnote{If
$k(t)$ and $k(t^{\prime})$ are linearly disjoint subfields of $\Omega$, then
$k(t,t^{\prime})$ is the subfield of $\Omega$ generated over $k$ by $t$ and
$t^{\prime}$.} When $V_{t}$ is an algebraic scheme over $k(t)$, we shall write
$V_{t^{\prime}}$ for the algebraic scheme over $k(t^{\prime})$ obtained from
$V_{t}$ by base change with respect to $t\mapsto t^{\prime}\colon
k(t)\rightarrow k(t^{\prime})$. Similarly, if $f_{t}$ denotes a regular map of
schemes over $k(t)$, then $f_{t^{\prime}}$ denotes the regular map over
$k(t^{\prime})$ obtained by base change. Similarly, $k(t^{\prime\prime})$ is a
second field isomorphic to $k(t)$ by $t\mapsto t^{\prime\prime}$ and
$k(t,t^{\prime},t^{\prime\prime})$ is the field of fractions of $k(t)\otimes
_{k}k(t^{\prime})\otimes_{k}k(t^{\prime\prime})$.

The next statement is essentially Theorem 6 and Theorem 7 of \cite{weil1956}.

\begin{theorem}
\label{dt25b}With the above notation, let $V_{t}$ be a quasi-projective
scheme over $k(t)$, and, for each pair $(t,t^{\prime})$, let
$\phi_{t^{\prime},t}$ be an isomorphism $V_{t}\rightarrow V_{t^{\prime}}$
defined over $k(t,t^{\prime})$. Then there exists an algebraic scheme $V$
defined over $k$ and an isomorphism $\phi_{t}\colon V_{k(t)}\rightarrow V_{t}$
(of schemes over $k(t)$) such that $\phi_{t^{\prime},t}=\phi_{t^{\prime}}%
\circ\phi_{t}^{-1}$ if and only if $\phi_{t^{\prime},t}$ satisfies the
following condition,%
\[
\phi_{t^{\prime\prime},t}=\phi_{t^{\prime\prime},t^{\prime}}\circ
\phi_{t^{\prime},t}\quad\text{(isomorphism of schemes over }k(t,t^{\prime
},t^{\prime\prime}).
\]
Moreover, when this is so, the pair $(V,\phi_{t})$ is unique up to an
isomorphism over $k$, and $V$ is quasi-projective or quasi-affine if $V$ is.
\end{theorem}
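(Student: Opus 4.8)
The necessity of the cocycle condition is immediate: if a model $V$ over $k$ and isomorphisms $\phi_t\colon V_{k(t)}\to V_t$ are given with $\phi_{t',t}=\phi_{t'}\circ\phi_t^{-1}$, then over $k(t,t',t'')$ one computes $\phi_{t'',t'}\circ\phi_{t',t}=\phi_{t''}\circ\phi_{t'}^{-1}\circ\phi_{t'}\circ\phi_t^{-1}=\phi_{t''}\circ\phi_t^{-1}=\phi_{t'',t}$. For the converse the plan is to descend along a model of $k(t)$ by faithfully flat descent (Theorem \ref{dt24}). Because $k(t)$ is a regular extension of $k$, I would first choose a geometrically integral affine variety $U=\Spec R$ over $k$ with $k(U)=k(t)$. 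Regularity gives that $R\otimes_k R$ and $R\otimes_k R\otimes_k R$ are again integral domains, so $U\times_k U$ and $U\times_k U\times_k U$ are integral with function fields $k(t,t')$ and $k(t,t',t'')$; and since $R\neq0$ the structure map $U\to\Spec k$ is faithfully flat.

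Using the constructions of the section on specialization, after inverting finitely many elements of $R$ I may spread $V_t$ out to a quasi-projective scheme $\mathcal U\to U$ whose generic fibre is $V_t$, and spread the isomorphism $\phi_{t',t}$ out to an isomorphism $\Phi\colon\pr_1^\ast\mathcal U\to\pr_2^\ast\mathcal U$ over some dense open subset of $U\times_k U$. Wherever the three pullbacks $\pr_{21}^\ast\Phi$, $\pr_{31}^\ast\Phi$, $\pr_{32}^\ast\Phi$ are all defined, the two maps $\pr_{31}^\ast\Phi$ and $\pr_{32}^\ast\Phi\circ\pr_{21}^\ast\Phi$ agree on the generic fibre over the integral base $U\times_k U\times_k U$; as the target is separated and that generic fibre is dense, they agree everywhere they are defined. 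Thus the cocycle identity required by Theorem \ref{dt24} is automatic once $\Phi$ itself is defined, and the whole problem reduces to making $\Phi$ defined over a genuine fibre product.

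The hard part is exactly this conversion of a \emph{generic} descent datum into one defined over the full fibre product that Theorem \ref{dt24} demands. The locus of definition of $\Phi$ is only a dense open $D\subseteq U\times_k U$, and no nonempty product open $W\times_k W$ can avoid the diagonal $\Delta\cong U$; so to find an open $W\subseteq U$ with $W\times_k W\subseteq D$ I must at least know that $\Phi$ extends across $\Delta$. Here the cocycle condition is decisive: taking $t''=t$ forces $\phi_{t,t}=\id$, which is precisely the compatibility with the diagonal section $R\otimes_k R\to R$ needed for $\Phi$ to extend, as the identity, over a neighbourhood of $\Delta$. Establishing this extension is Weil's specialization argument and is the technical core of the theorem; granting it, I would shrink $U$ to an open $W$ with $W\times_k W$ inside the locus of definition, so that $\Phi$ becomes an honest descent datum on $\mathcal U|_W$ relative to the faithfully flat cover $W\to\Spec k$, and then invoke Theorem \ref{dt24}.

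Theorem \ref{dt24} then yields a scheme $V$ over $k$, quasi-projective (or quasi-affine) since $\mathcal U|_W$ is, together with an isomorphism $\varphi_0\colon p^\ast V\to\mathcal U|_W$. Passing to generic fibres gives an isomorphism $\phi_t\colon V_{k(t)}\to V_t$, and the splitting relation $\phi_{t',t}=\phi_{t'}\circ\phi_t^{-1}$ holds because it is the generic fibre of the defining relation $\Phi=\pr_2^\ast(\varphi_0)\circ\pr_1^\ast(\varphi_0)^{-1}$ on $W\times_k W$. Finally, the uniqueness of $(V,\phi_t)$ up to isomorphism over $k$, and the assertions on quasi-projectivity and quasi-affineness, I would read off from the corresponding uniqueness and quasi-projectivity clauses of Theorem \ref{dt24}, together with the fact that a $k$-scheme is determined up to isomorphism by any of its spreadings-out over $U$ after shrinking.
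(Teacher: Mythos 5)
Your necessity computation and the reduction of the cocycle identity to the generic fibre of $U\times_k U\times_k U$ are fine, but the step you label as ``the technical core'' and then grant yourself is both the entire content of the theorem and, in the form you state it, false. Even if $\Phi$ extends as the identity across a neighbourhood of the diagonal, a dense open subset $D$ of $U\times_k U$ containing $\Delta$ need not contain any subset of the form $W\times_k W$: take $U=\mathbb{A}^1$ and $D$ the complement of the curve $y=x+1$, which is dense, open, and contains the diagonal, yet meets every $W\times_k W$ with $W$ nonempty open in its complement (since $W$ is cofinite, it contains some $x$ with $x+1\in W$). So the plan ``shrink $U$ to $W$ with $W\times_k W$ inside the locus of definition of $\Phi$'' cannot be carried out, and no amount of further shrinking of $U$ repairs it. This is exactly why Weil's theorem is not a formal consequence of spreading out plus descent along a finite-type cover.

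The paper's proof avoids spreading out altogether: it applies faithfully flat descent (Theorems \ref{dt23c} and \ref{dt24}) directly to the cover $\Spec k(t)\rightarrow\Spec k$. Since Grothendieck's descent imposes no finiteness hypotheses on the faithfully flat morphism, one may take the ``cover'' to be the generic point itself; then $\Spec k(t)\times_{\Spec k}\Spec k(t)=\Spec\bigl(k(t)\otimes_k k(t')\bigr)$ is integral with fraction field $k(t,t')$ (here regularity of $k(t)/k$ is used), and the only thing to verify is that the isomorphism $\phi_{t',t}$, a priori defined over the fraction field $k(t,t')$, is in fact defined over the subring $k(t)\otimes_k k(t')$ --- this is the dashed arrow $\mathcal{O}(\phi)$ in the paper's diagram, and it is the genuine input from the hypotheses. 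Once that is in hand, the affine case is Theorem \ref{dt23c} verbatim and the general case is Theorem \ref{dt24}. If you want to keep your spreading-out picture, you must replace the ``find $W$ with $W\times_k W\subseteq D$'' step by an argument of this kind at the generic point; as written, your proof has a gap precisely where the theorem lives.
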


\begin{proof}
The condition is obviously necessary. Assume initially that $V_{t}$ is affine,
and let $R_{r}=\mathcal{O}{}(V_{t})$. From $\phi_{t,t^{\prime}}$ we get a
commutative diagram%
\[
\begin{tikzcd}
R_{t}\otimes_{k(t)}k(t,t^{\prime})
&R_{t^{\prime}}\otimes_{k(t^{\prime})}k(t,t^{\prime})
\arrow{l}[swap]{\mathcal{O}(\phi_{t^{\prime},t})}\\
R_{t}\otimes_{k}k(t^{\prime})\arrow{u}
&R_{t^{\prime}}\otimes_{k}k(t).\arrow{u}
\arrow[dashed]{l}[swap]{\mathcal{O}(\phi)}
\end{tikzcd}
\]
On replacing $t^{\prime}$ with $t$, we get a homomorphism $\mathcal{O(}{}%
\phi)\colon R_{t}\otimes_{k}k(t)\rightarrow k(t)\otimes_{k}R_{t}$ satisfying
the condition $\mathcal{O(}{}\phi)_{2}=\mathcal{O(}{}\phi)_{1}\circ
\mathcal{O(}{}\phi)_{3}$ of Theorem \ref{dt23}. Thus, there exists a
$k$-algebra $R$ and an isomorphism $\mathcal{O}{}(\varphi)\colon
k(t)\otimes_{k}R\rightarrow R_{t}$ such that
\[
\mathcal{O(}{}\phi)=(\id_{B}\otimes\mathcal{O(}{}\varphi))\circ(\mathcal{O(}%
{}\varphi)\otimes\id_{B})^{-1}.
\]
Now $(\Spec(R),\varphi)$ is the requred pair.

In the general case, there is a commutative diagram%
\[
\begin{tikzcd}
V_{t}\times_{\Spec k(t)}\Spec k(t,t^{\prime})
\arrow{r}{\phi_{t^{\prime},t}}\arrow{d}
&V_{t^{\prime}}\times_{\Spec k(t^{\prime})}\Spec k(t,t^{\prime})\arrow{d}\\
V_{t}\times_{\Spec k}\Spec k(t^{\prime})\arrow[dashed]{r}{\phi}
&V_{t^{\prime}}\times_{\Spec k}\Spec k(t).
\end{tikzcd}
\]
This case follows from Theorem \ref{dt24}.
\end{proof}

\begin{theorem}
\label{dt25}Let $\Omega$ be an algebraically closed field of infinite
transcendence degree over a perfect field $k$. Then descent is effective for
quasi-projective schemes over $\Omega$.
\end{theorem}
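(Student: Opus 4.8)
The plan is to split the descent into a geometric (transcendental) part, handled by Weil's Theorem~\ref{dt25b}, and an arithmetic (algebraic) part, handled by Galois descent as in Corollary~\ref{dt16}. Write $k^{\mathrm{al}}$ for the algebraic closure of $k$ in $\Omega$; since $k$ is perfect, $k^{\mathrm{al}}=k^{\mathrm{sep}}$ is Galois over $k$, while $k^{\mathrm{al}}$ is algebraically closed and algebraically closed in $\Omega$. Let $(\varphi_{\sigma})_{\sigma\in\Gamma}$, $\Gamma=\Aut(\Omega/k)$, be a descent datum on a quasi-projective scheme $V$ over $\Omega$; I must produce a model over $k$ splitting it. Being continuous, the datum is split by some model $(V_{1},\varphi_{1})$ over a subfield $K$ of $\Omega$ finitely generated over $k$.

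\textbf{Geometric descent to $k^{\mathrm{al}}$.} First I would restrict the datum to $\Gamma^{\mathrm{al}}=\Aut(\Omega/k^{\mathrm{al}})$. Enlarging $K$ to $K'=k^{\mathrm{al}}\cdot K$, which is finitely generated and regular over $k^{\mathrm{al}}$ (so $K'=k^{\mathrm{al}}(t)$ for a separating transcendence basis $t$), the restricted datum is still split by $V_{1K'}$, since $\Aut(\Omega/K')\subset\Aut(\Omega/K)$. Because $\Omega$ has infinite transcendence degree over $k^{\mathrm{al}}$ and $k^{\mathrm{al}}$ is algebraically closed in $\Omega$, Proposition~\ref{dt17} lets me realize a second, independent generic point $t'=\sigma t$ for a suitable $\sigma\in\Gamma^{\mathrm{al}}$ with $K'$ and $\sigma K'$ linearly disjoint, so that $k^{\mathrm{al}}(t,t')$ is the field of fractions of $k^{\mathrm{al}}(t)\otimes_{k^{\mathrm{al}}}k^{\mathrm{al}}(t')$. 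Transporting $\varphi_{\sigma}$ through $\varphi_{1}$ gives an $\Omega$-isomorphism $\varphi_{1}\circ\varphi_{\sigma}\circ(\sigma\varphi_{1})^{-1}$ between the base changes of $\sigma V_{1}=V_{t'}$ and $V_{1}=V_{t}$; the aim is to descend it to the isomorphism $\phi_{t',t}\colon V_{t}\to V_{t'}$ over $k^{\mathrm{al}}(t,t')$ demanded by Weil, the cocycle identity $\varphi_{\sigma}\circ\sigma\varphi_{\tau}=\varphi_{\sigma\tau}$ translating into the condition $\phi_{t'',t}=\phi_{t'',t'}\circ\phi_{t',t}$ over $k^{\mathrm{al}}(t,t',t'')$. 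With this data assembled, Theorem~\ref{dt25b} produces a model $(V_{0},\varphi_{0})$ of $V$ over $k^{\mathrm{al}}$ splitting $(\varphi_{\sigma})_{\sigma\in\Gamma^{\mathrm{al}}}$.

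\textbf{Galois descent to $k$.} With a splitting model $(V_{0},\varphi_{0})$ over $k^{\mathrm{al}}$ in hand, the argument in the second half of the proof of Theorem~\ref{dt21} applies: the maps $\varphi_{\sigma}'=\varphi_{0}^{-1}\circ\varphi_{\sigma}\circ\sigma\varphi_{0}$ are stable under $\Gamma^{\mathrm{al}}$ (because $V_{0}$ splits the geometric datum), hence defined over $k^{\mathrm{al}}$ by Corollary~\ref{dt5} (whose hypotheses hold, as $k^{\mathrm{al}}$ is perfect and $\Omega$ separably closed), and depend only on $\sigma|_{k^{\mathrm{al}}}$, so they form a descent system for the quasi-projective scheme $V_{0}$ over the Galois extension $k^{\mathrm{al}}/k$. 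Here continuity of this finite-level system comes from continuity of the original datum rather than from a rigidity hypothesis. Corollary~\ref{dt16}(a) then descends $V_{0}$ to a model $(V_{00},\varphi')$ over $k$ splitting $(\varphi_{\sigma}')$, and $(V_{00},\varphi_{0}\circ\varphi'_{\Omega})$ splits the original datum.

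\textbf{The main obstacle} is the translation in the geometric step: one must verify that the transported isomorphism is genuinely defined over $k^{\mathrm{al}}(t,t')$ itself, not merely over some larger finitely generated field, that it is independent of the auxiliary choice of $\sigma$, and that the resulting family satisfies Weil's cocycle identity exactly. The delicacy is that $k^{\mathrm{al}}(t,t')$ is imperfect, so the fixed-field criterion of Corollary~\ref{dt5} is unavailable over it (cf.\ Corollary~\ref{dt0c}); this is precisely why the descent is routed through the faithfully flat machinery underlying Theorem~\ref{dt25b} rather than a naive Galois argument, and why the regular-extension hypothesis there is essential. Quasi-projectivity is used throughout, so that both Weil's theorem and Corollary~\ref{dt16} apply.
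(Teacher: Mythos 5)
Your proof is correct and follows essentially the same route as the paper: split the datum by a model over a field $K$ finitely generated over $k$, use Proposition \ref{dt17} together with Weil's Theorem \ref{dt25b} to remove the transcendental part, and finish with Galois-type descent for the algebraic part; you also correctly identify the definedness of the transported isomorphism over the (imperfect) field $k^{\mathrm{al}}(t,t')$ as the delicate point, which the paper likewise asserts without proof. The one place you diverge is the intermediate field: the paper takes $k'$ to be the algebraic closure of $k$ \emph{in} $K$ --- a finite extension of $k$ over which $K$ is regular --- applies Theorem \ref{dt25b} with base $k'$, and then invokes Theorem \ref{dt25a} for the finite separable extension $k'/k$, so that continuity of the residual descent datum is automatic (it is split by the model over $k'$ itself). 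You instead descend to all of $k^{\mathrm{al}}$ and are left with a descent system on $V_{0}$ for the infinite extension $k^{\mathrm{al}}/k$, whose continuity (being split by a model over a \emph{finite} extension of $k$) you assert but do not prove; this is the only real obligation your variant adds. It is true, and here is the missing argument: choose a model of $V_{0}$ over a finite extension $k_{2}$ of $k$ and a finitely generated field $L\supset k_{2}K$ over which the comparison isomorphism $V_{1\Omega}\rightarrow V_{0\Omega}$ is defined; then $\varphi_{\sigma}'$ is the canonical isomorphism for every $\sigma\in\Aut(\Omega/L)$, and since $\varphi_{\sigma}'$ depends only on $\sigma|_{k^{\mathrm{al}}}$ and $\Aut(\Omega/L)\rightarrow\Gal(k^{\mathrm{al}}/k^{\mathrm{al}}\cap L)$ is surjective ($L$ being regular over $k^{\mathrm{al}}\cap L$, which is finite over $k$), the system is split by that model over $k^{\mathrm{al}}\cap L$. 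With this supplied, your proof and the paper's are interchangeable, the underlying tools (Proposition \ref{dt17}, Theorem \ref{dt25b}, and Corollary \ref{dt16}, on which Theorem \ref{dt25a} itself rests) being identical.
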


\begin{proof}
Let $(\varphi_{\sigma})_{\sigma}$ be a descent datum on an algebraic 
scheme $V$ over $\Omega$. Because $(\varphi_{\sigma})_{\sigma}$ is continuous, it is
split by a model of $V$ over some subfield $K$ of $\Omega$ finitely generated
over $k$. Let $k^{\prime}$ be the algebraic closure of $k$ in $K$; then
$k^{\prime}$ is a finite extension of $k$ and $K$ is a regular extension of
$k$. Write $K=k(t)$, and let $(V_{t},\varphi^{\prime})$ be a model of $V$ over
$k(t)$ splitting $(\varphi_{\sigma})$. According to Lemma \ref{dt17}, there
exists a $\sigma\in\Aut(\Omega/k)$ such that $k(t^{\prime})\overset{\df}{=}%
\sigma k(t)$ and $k(t)$ are linearly disjoint over $k$. The isomorphism%
\[
V_{t\Omega}\overset{\varphi^{\prime}}{\longrightarrow}V\overset{\varphi
_{\sigma}^{-1}}{\longrightarrow}\sigma V\overset{(\sigma\varphi^{\prime}%
)^{-1}}{\longrightarrow}V_{t^{\prime},\Omega}%
\]
is defined over $k(t,t^{\prime})$ and satisfies the conditions of Theorem
\ref{dt25b}. Therefore, there exists a model $(W,\varphi)$ of $V$ over
$k^{\prime}$ splitting $(\varphi_{\sigma})_{\sigma\in\Aut(\Omega/k(t)}$.

For $\sigma,\tau\in\Aut(\Omega/k)$, let $\varphi_{\tau,\sigma}$ be the
composite of the isomorphisms%
\[
\sigma W\overset{\sigma\varphi}{\longrightarrow}\sigma V\overset{\varphi
_{\sigma}}{\longrightarrow}V\overset{\varphi_{\tau}^{-1}}{\longrightarrow}\tau
V\overset{\tau\varphi}{\longrightarrow}\tau W\text{.}%
\]
Then $\varphi_{\tau,\sigma}$ is defined over the algebraic closure of $k$ in
$\Omega$ and satisfies the conditions of Theorem \ref{dt25a}, which gives a
model of $W$ over $k$ splitting $(\varphi_{\sigma})_{\sigma\in\Aut(\Omega
/k)}.$
\end{proof}

\begin{notes}
\cite{weil1956} is the first important paper in descent theory. Its results
were not superseded by the results of Grothendieck. As noted the statements of
Theorems \ref{dt25a} and \ref{dt25b} are from Weil's paper. Their proofs are
probably also Weil's. Theorem \ref{dt25} is Theorem 1.1. of \cite{milne1999}.
\end{notes}

\bsmall
\bibliographystyle{cbe}
\bibliography{DTrefs.bib}
\esmall

\end{document}